\def \ve {\varepsilon}
\def \cL {\mathcal{L}}
\def \cN {\mathcal{N}}
\def \tN {\text{N}}
\def\Q{{\boldsymbol{Q}}}
\def\u{{\boldsymbol{u}}}
\def\x{{\boldsymbol{x}}}
\def\y{{\boldsymbol{y}}}
\def\V{{\boldsymbol{V}}}
\def\P{{\mathcal{P}}}
\def\H{{\bf H}}
\def\OMEGA{{\boldsymbol{\omega}}}
\def\D{{\boldsymbol{D}}}
\def\A{{\boldsymbol{A}}}
\theoremstyle{definition}
\newtheorem{thm}{Theorem}[section]
\newtheorem{proposition}{Proposition}[section]
\newtheorem{lemma}{Lemma}[section]
\newtheorem{remark}{Remark}[section]
\def\XXint#1#2#3{{\setbox0=\hbox{$#1{#2#3}{\int}$ }
		\vcenter{\hbox{$#2#3$ }}\kern-.6\wd0}}
\begin{document} 

\title{Dynamics and steady state of squirmer motion\\
in liquid crystal}


\author[1]{Leonid Berlyand}
\author[1]{Hai Chi}
\author[2]{Mykhailo Potomkin}
\author[3]{Nung Kwan Yip}

\affil[1]{Department of Mathematics, Pennsylvania State University, University Park, PA 16802, USA}

\affil[2]{Department of Mathematics, University of California at Riverside, Riverside, CA 92521, USA}

\affil[3]{Department of Mathematics, Purdue University, 150 N. University Street
	West Lafayette, IN 47907-2067, USA}

\date{}

\maketitle

\begin{abstract}
We analyze a nonlinear PDE system describing the motion of a microswimmer in a nematic liquid crystal environment. For the microswimmer's motility, the squirmer model is used in which self-propulsion enters the model through the slip velocity on the microswimmer's surface. The liquid crystal is described using the well-established Beris-Edwards formulation. In previous computational studies, it was shown that the squirmer, regardless of its initial configuration, eventually orients itself either parallel or perpendicular to the preferred orientation dictated by the liquid crystal. Furthermore, the corresponding solution of the coupled nonlinear system converges to a steady state. 
In this work, we rigourously establish the existence of steady state and also the finite-time existence for the time-dependent problem in a periodic domain. 
Finally, we will use a two-scale asymptotic expansion to derive a homogenized 
model for the collective swimming of squirmers as they reach their steady state orientation and speed.               
\end{abstract}

\noindent{\bf Keywords}: Beris-Edwards model, squirmer, existence of solutions, two-scale analysis.

\tableofcontents


\section{Introduction}
Microswimmers are objects of micron size which are immersed in a fluid and capable of autonomous motion. They are ubiquitous in nature,
as examplified by bacteria and eukaryotic cells. 
Recently, synthetic microswimmers for applications in medicine and material repair has been introduced in \cite{paxton2004catalytic}, see also reviews \cite{wong2016synthetic,dey2017chemical}. Transport of microswimmers, both living and synthetic, as well as effective properties of suspensions populated by many such microswimmers largely depends on how they respond to surrounding environment. Modeling microswimmers has become a growing area of research. The case when microswimmers are immersed in a Newtonian fluid has been intensively studied -- see \cite{sokolov2009reduction,LopGacDouAurCle2015,HaiAraBerKar08,HaiSokAraBer09,HaiAroBerKar10,PotKaiBerAra2017,rubio2021alignment} and reviews \cite{MarJoaRamLivProRaoSim2013,ElgWinGom2015,gompper20202020,igor2022bacteria}. 
However, bacteria often swim in biofluids which demonstrate viscoelastic or anisotropic properties very different from those of isotropic Newtonian fluids. 
For example, {\it Helicobacter pylori} bacteria are present in stomach and are
associated with diseases such as chronic atrophic gastritis and ulcer \cite{parsonnet1991gastric,bravo2018pylori}. The ``success" in the inflammation of stomach walls by {\it H. Pylori} depends on how the bacterium reorients itself in the mucous protective layer. Note that mucus is a viscoelastic fluid which exhibits properties of a liquid crystal for a certain range of macroscopic parameters \cite{viney1999mucus,nuris2019mucus}. In addition to medical relevance, experimental realization which combines bacteria with a nematic water-based and non-toxic (to bacteria) liquid crystal led to a wealth of intriguing observations such as collective phenomena for small bacterial concentrations, moving topological defects, and visualization of flagella beating \cite{ZhoSokLavAra2014,Zhou_2017,genkin2017defects,Ara2018,Zho2018,turiv2020polar}.     
 
One of the most well-established model of microswimmer is the so called {\it squirmer}. The model was initially introduced in \cite{lighthil1952squirmer} for {\it Paramecium}, a micro-organism which swims with the help of small elastic 
appendages called cilia. The main modeling assumption for squirmers is that the body is non-deformable and the swimming effect is introduced via a given slip velocity profile on the body surface that models the cilia's activity. Analysis of squirmers immersed in a Newtonian fluid, from the well-posedness to the relation between the slip profile and the resulting velocity has been the focus of many authors,  \cite{galdi1999steady,galdi2007navier,michelin2010efficiency,michelin2013strokes,guo2014cilia,chisholm2016squirmer,guo2021optimal}.  

To describe a nematic liquid crystal we use the well-established Beris-Edwards model \cite{BerEdw1994}, a highly nonlinear PDE model coupling Navier-Stokes (or Stokes) equation with a PDE for the tensor order parameter which indicates the preferred local orientation as well as the strength of the local alignment of the
liquid crystal molecules. 
Well-posedness of the Beris-Edwards model in $\mathbb R^2$ and $\mathbb R^3$ were first studied in \cite{PaiZar2012,paicu2011}. Existence of weak and strong 
solutions in a bounded domain with a fixed boundary and both homogeneous and inhomogeneous boundary conditions for the tensor order parameter were established in \cite{ade/1448323166,AbeDolLiu2013}.  


In our work, we consider a model which combines a Beris-Edwards liquid crystal with a squirmer. Such a system was, for example, used as a computational model in \cite{LinWurStr2017} to study orientation dynamics of the spherical squirmer with respect to the preferred orientation of the liquid crystal. In \cite{chi2020surface}, we extended this model to elongated squirmers and studied how the long-term orientation dynamics of the squirmer depends on physical and geometrical parameters. To the best of our knowledge, there are no analytical results, 
such as well-posedness or model reductions via multi-scale limits for squirmers immersed in a liquid crystal environment. On the other hand, there 
is a range of results for particles in classical isotropic fluids.
We refer to \cite{duerinckx2021corrector, duerinckx2022effective, duerinckx2021continuum, duerinckx2020einstein, duerinckx2022sedimentation} for some recent works.

The structure of this paper is as follows. 
In Section~\ref{sec:model}, we present the Beris-Edwards model coupled with a squirmer for both the time-dependent 
and steady state problems. The latter corresponds to a squirmer moving with a 
constant velocity. 
In Section~\ref{sec:main-result}, we formulate our main results on the 
existence of solutions to both the steady state and time-dependent problems as well as a two-scale homogenization limit resulting in an effective model for a suspension of squirmers swimming parallel to each other. The last statement
can be considered as a steady motion of a bacterial colony. 
Proof of the main results are presented in Sections~\ref{sec:steady-state}, \ref{sec:time-dependent}, and \ref{sec:homo_expansion}. Some calculations and non-dimensionalization are relegated to Appendix.   


\section{Model}
\label{sec:model}

\subsection{Time dependent PDE system}
Consider a rigid squirmer swimming in a liquid crystal with translational and angular velocities $\V(t)$ and $\OMEGA(t)$, respectively. 
In the context of the Beris-Edwards model, the liquid crystal is described by 
a velocity field $\u(\x,t):\mathbb R^d\times \mathbb R_+\mapsto \mathbb R^d$ 
and a tensor order field $\Q(\x,t):\mathbb R^d\times \mathbb R_+\mapsto \mathbb R^{d\times d}$ taking values in symmetric traceless $d\times d$ matrices.
Here $d=2,3$ is the spatial dimension.
The functions $\u=(u_j)_{j=1}^{d}$ and $\Q=(Q_{ij})_{i,j=1}^{d}$ satisfy the following system of partial differential equations and boundary conditions, written in the frame moving with velocity $\V(t)$, so the squirmer is always centered at $0$:  
\begin{eqnarray}
	&\rho(\partial_t  + \u\cdot \nabla)\u + \rho \frac{d\V}{dt}  = \nabla \cdot (\sigma_{\text{hydro}}+\sigma_{\text{ela}}),\text{ in } \Pi \setminus \P(t) &\label{eq:time_lc_1}\\
	&\nabla\cdot \u = 0,\text{ in }\Pi \setminus \P(t),&\label{eq:time_lc_2}\\
	&\u = u_{\text{sq}}(\boldsymbol{\alpha}(t),\x)\boldsymbol{\tau} + \OMEGA(t) \times \x,\text{ on }\partial \P(t),&\label{eq:time_lc_3}\\
	&\partial_t \Q + (\u\cdot \nabla) \Q -S(\nabla \u, \Q)= \Gamma \left( K\Delta \Q + \hat{\H}(\Q) \right) + F_{\text{ext}}(\Q, \Q_\infty),\text{ in }\Pi \setminus \P(t),&\label{eq:time_lc_4}\\
	&\Q, \u, \nabla p \quad \text{periodic in} \, \Pi \label{eq:times_lc_5}\\
	&K\partial_\nu \Q =W(\Q_{\text{pref}}-\Q) \text{ on }\partial \P(t).&\label{eq:time_lc_6}
\end{eqnarray}
Here $\Pi = (-L, L)^{d}$ is a periodic box, 
$\P(t)$ and $\partial \P(t)$ are the domain occupied by the squirmer 
and its surface in the moving frame.
We assume that $\overline{\mathcal{P}(t)}\subset \Pi$ for all $t\geq 0$. We will also use the notation $\Omega(t):=\Pi\setminus\mathcal{P}(t)$
\textcolor{black}{to denote the fluid region}. 

Equation \eqref{eq:time_lc_1} is a modified Navier-Stokes equation for the 
velocity $\u(\x,t)$ which satisfies the divergence-free condition
\eqref{eq:time_lc_2}. To this effect, we have
$\sigma_{\text{hydro}}=\eta(\nabla \u+(\nabla \u)^{\text{T}})-p\mathbb{I}$ 
to be the standard isotropic stress tensor
where $p(\x,t)$ is the pressure of the liquid crystal with 
uniform density $\rho$ and viscosity $\eta$. 
The internal structure of the liquid crystal, i.e., local preferred direction and order, affects the flow through an additional elastic stress $\sigma_{\text{ela}}$ given by 
\begin{eqnarray}
\sigma_\text{ela}&=&K\left[(\Q\,\Delta \Q- \Delta \Q\, \Q)-\nabla \Q\astrosun \nabla \Q\right]\nonumber\\
&&-\xi\left[\H(\Q+\dfrac{\mathbb I}{d})+(\Q+\dfrac{\mathbb{I}}{d})\H-2(\Q+\frac{\mathbb{I}}{d})\text{Tr}(\Q\H)\right]. 
\label{def_sigma_ela}  
\end{eqnarray}   
Here $K$ is the elastic constant and $\nabla \Q\astrosun \nabla \Q$ is a $d\times d$ matrix with the $(k,l)$ component $\sum\limits_{i,j}\partial_{x_k}Q_{ij}\partial_{x_l}Q_{ij}$. The parameter $\xi$ measures the ratio between 
tumbling and aligning that a shear flow exerts on the liquid crystal molecules. The matrix-valued function $\H=\H(\Q)$ is defined as $\H(\Q)=\hat{\H}(\Q)+K\Delta \Q$ where $\hat{\H}(\Q)$ is
\begin{equation}
\hat{\H}(\Q):=a \Q-c \Q \text{Tr}(\Q^2)=-\nabla_\Q\left(\dfrac{c}{4}\|\Q\|^4-\dfrac{a}{2}\|\Q\|^2 \right)=-\nabla_\Q\hat{\mathcal{F}}(\Q).\label{def_of_H_hat}
\end{equation} 
The scalar potential $\hat{\mathcal{F}}(\Q)$ is the polynomial part of the Landau-de Gennes free energy whose coefficients $a$ and $c$ depend on macroscopic parameters of the liquid crystal such as temperature. The potential $\hat{\mathcal{F}}(\Q)$ attains minima at $\Q={\bf 0}$, corresponding to the isotropic state when the liquid crystal flows as a Newtonian fluid, and at tensor order parameters $\Q$ with $q_{\infty}:=\|\Q\|=\sqrt{\dfrac{a}{c}}$, corresponding to the equilibrium liquid crystalline states. 

Boundary conditions \eqref{eq:time_lc_3} describes how the squirmer interacts with the flow $\u$ of the liquid crystal. 
The orientation of the squrimer is described by a vector 
$\boldsymbol{\alpha}(t)\in \mathcal{S}^{d-1}$.
We also let $\boldsymbol{\tau}$ to be a tangent vector field to the surface of the squirmer which can be chosen to be
$\boldsymbol{\tau}:=(\boldsymbol{\alpha}\times\boldsymbol{\nu})\times\boldsymbol{\nu}$ where $\boldsymbol{\nu}$ is the inward normal vector on the squirmer's surface $\partial \mathcal{P}(t)$. A typical example of the slip velocity $u_{\text{sq}}$ is given by \cite{lighthil1952squirmer} (which is also used in the 
computational work \cite{LinWurStr2017,chi2020surface}) 
\begin{equation*}
u_{\text{sq}}(\boldsymbol{\alpha}(t),\boldsymbol{x})=v_{\text{prop}}\sin \theta (1+\beta \cos(\theta)), \text{ where }\theta=\cos^{-1}\left[\dfrac{\boldsymbol{x}\cdot\boldsymbol{\alpha}}{\|\boldsymbol{x}\|}\right].
\end{equation*}
Here $\theta$ is the azimuthal angle on the squirmer, the parameter $v_\text{prop}$ is proportional to propulsion strength, and $\beta$ quantifies the type of the squirmer (puller vs pusher; see \cite{chi2020surface} for details). In this work, we consider $u_{\text{sq}}(\boldsymbol{\alpha}(t),\boldsymbol{x})=\sin(\theta) g(\theta)$ with a smooth function $g(\theta)$. Note that such $u_{\text{sq}}$ vanishes at points of singularity of the vector field $\boldsymbol{\tau}$.   

The instantaneous angular velocity of the squirmer is denoted by $\OMEGA(t)$.
Then any material point $\x$ on the squirmer surface $\partial \mathcal{P}(t)$ 
will move with velocity $\OMEGA(t)\times \x$ in the moving frame for which the system \eqref{eq:time_lc_1}-\eqref{eq:time_lc_6} is written. Then the boundary condition \eqref{eq:time_lc_3} states that there is a given slip velocity $u_{\text{sq}}(\boldsymbol{\alpha}(t),\x)\boldsymbol{\tau}$ with the no-penetration condition:
\begin{eqnarray}
\left[\u-\OMEGA\times \x\right]\times \boldsymbol{\nu}&=&u_{\text{sq}}(\boldsymbol{\alpha}(t),\x)\boldsymbol{\tau}\times \boldsymbol{\nu},\nonumber\\
\left[\u-\OMEGA\times \x\right]\cdot \boldsymbol{\nu}&=&0.\nonumber
\end{eqnarray}    
The given non-zero slip velocity models self-propulsion of the squirmer. Such a condition was originally derived for micro-organisms swimming with the help of small elastic appendages (cilia) distributed on the surface \cite{lighthil1952squirmer}.      

The matrix-valued equation \eqref{eq:time_lc_4} describes the dynamics of 
$\Q(\x,t)$. While the two first terms in the left-hand side of \eqref{eq:time_lc_4} is the advection derivative, the third term $S(\nabla \u,\Q)$ describes how the flow gradient $\nabla\u$ rotates and stretches the order-parameter $\Q$, and is given by 
\begin{eqnarray}\label{def_of_S}
S(\nabla \u,\Q)=(\xi \D + \A)\left(\Q+\frac{\mathbb I}{d}\right)+\left(\Q+\frac{\mathbb I}{d}\right)(\xi \D- \A)-2\xi\left(\Q+\frac{\mathbb I}{d}\right)\text{tr}(\Q\nabla \u),
\end{eqnarray}
where $\D=\dfrac{1}{2}\left[\nabla \u +(\nabla \u)^{\text{T}}\right]$ and $\A=\dfrac{1}{2}\left[\nabla \u -(\nabla \u)^{\text{T}}\right]$ are symmetric and anti-symmetric parts of $\nabla \u$, respectively. The right-hand side of \eqref{eq:time_lc_4} consists of the term leading to the minimization of the total Landau-de Gennes energy 
\begin{equation}
\mathcal{E}_{\text{LdG}}(\Q)=\int\limits_{\Omega(t)}\hat{\mathcal{F}}(\Q)+\frac{K}{2}|\nabla \Q|^2\,\text{d}\x
\end{equation}
with the relaxation parameter $\Gamma>0$ and the term $F_{\text{ext}}(\Q, \Q_\infty)$ describing the aligning effect with an external field. This term imposes the equilibrium condition for liquid crystal, that is, in the absence of squirmer we have $\Q\equiv \Q_\infty$. We chose  $\Q_{\infty}=q_{\infty}(\boldsymbol{e}_x\otimes\boldsymbol{e}_x-\frac{\mathbb I}{d})$ which means that if the liquid crystal is not perturbed by a squirmer then its molecules are oriented parallel to $\boldsymbol{e}_x$ (the unit basis vector parallel to $x$-axis). In this work,  we will use the example of $F_{\text{ext}}(\Q, \Q_\infty)$ from \cite{genkin2017defects} for $d=2$, given by
\begin{equation}\label{2d_f_ext}
F_{\text{ext}}(\Q, \Q_{\infty}) = -\zeta \Q R_{\pi/2} \text{Tr}[\Q\Q_{\infty}R_{\pi/2}],
\end{equation}
where $\zeta\geq0$ is the aligning parameter and $R_{\pi/2}$ is the matrix of counterclockwise rotation by $\pi/2$. For $d=3$, the formula for $F_{\text{ext}}(\Q, \Q_\infty)$ is  
\begin{equation}\label{3d_f_ext}
F_{\text{ext}}(\Q, \Q_{\infty}) =\zeta \left( \text{tr}(\Q^2) \Q_\infty - \text{tr}(\Q\Q_{\infty}) \Q \right)
\end{equation}
We note that if one considers dynamics $\dot{\Q}=F_{\text{ext}}(\Q, \Q_{\infty})$ then the Euclidean norm of $\Q$ is preserved, {\it i.e.} $\text{tr}(\Q^2)\equiv \text{const}$, and $\Q(t)$ converges to a multiple of $\Q_{\infty}$ as $t$ increases, so that $\Q\cdot \Q_{\infty}=\text{tr}(\Q\Q_{\infty})>0$. One can also show that \eqref{3d_f_ext} is equivalent to \eqref{2d_f_ext} in the case of two-dimensional $\Q$ and $\Q_\infty$ (with zero third row and column).
 
We impose anchoring boundary condition \eqref{eq:time_lc_6} on $\Q$ along the squirmer surface $\partial \mathcal{P}(t)$ which forces $\Q$ to be close to a given tensor $\Q_{\text{pref}}=q_\infty(\boldsymbol{n}_{\text{pref}}\otimes\boldsymbol{n}_{\text{pref}}-\frac{{\mathbb I}}{d})$. Here, $\boldsymbol{n}_{\text{pref}}=\boldsymbol{\nu}$ in the case of homeotropic anchoring when the surface orients liquid crystal molecules perpendicular to it or equivalently, parallel to the normal vector $\boldsymbol{\nu}$. On the other hand,
$\boldsymbol{n}_{\text{pref}}=\boldsymbol{\tau}$ in the case of the planar anchoring when molecules are aligned with the tangential vector field $\boldsymbol{\tau}$. The boundary condition \eqref{eq:time_lc_6} indeed penalizes the difference $\Q_{\text{pref}}-\Q$ in the sense that if we drop all terms in \eqref{eq:time_lc_4} except $\Gamma \left( K\Delta \Q + \hat{\H}(\Q) \right)$, then the solution $\Q$ to this truncated version of \eqref{eq:time_lc_4} with boundary condition \eqref{eq:time_lc_6} minimizes
the energy 
\begin{equation}\label{eq:as_if_energy}
\mathcal{E}_{\text{LdG}}(\Q)+W\int\limits_{\partial\mathcal{P}(t)}|\Q_{\text{pref}}-\Q|^2\,\text{d}S_\x.
\end{equation}
The coefficient $W$ in front of the penalization term in the energy functional \eqref{eq:as_if_energy} and also the right-hand side of 
\eqref{eq:time_lc_6} measures the anchoring strength. Mathematically,
depending on if $W\to \infty$ or $0$, \eqref{eq:time_lc_6} reduces to 
Dirichlet or Neumann boundary condition for $\Q$.

To determine the trajectory of the squirmer, that is, its velocity $\boldsymbol{V}(t)$ and angular velocity $\OMEGA(t)$, we consider force and torque balances for the squirmer: 
\begin{eqnarray}
&&m\frac{\text{d}\V}{\text{d}t} = \int\limits_{\partial \P(t)} \sigma\boldsymbol{\nu}\, \text{d}S_\x, \label{eq:time_ft_1}\\
&&\frac{\text{d}\left[I(t)\boldsymbol{\omega}\right]}{\text{d}t} = \int\limits_{\partial \P(t)} \x \times \sigma\boldsymbol{\nu} +\boldsymbol{\ell} \,\text{d}S_\x. \label{eq:time_ft_2}
\end{eqnarray} 
 Here $\sigma=\sigma_{\text{hydro}}+\sigma_{\text{ela}}$ is the total stress whereas $m$ and $I(t)=\left\{I_{ij}\right\}_{i,j=1}^{d}$ are the mass and inertia tensor of the squirmer, defined via
 \begin{eqnarray*}
 m&=& \rho_{\mathcal{P}}|\mathcal{P}(t)|,\\
 I_{ij}(t)&=&\rho_{\mathcal{P}}\int\limits_{\mathcal{P}(t)}\left[\boldsymbol{e}_i\times \x \right]\cdot \left[\boldsymbol{e}_j\times \x\right]\,\text{d}\x.
 \end{eqnarray*}
Here $\rho_{\mathcal{P}}$ is the squirmer's density. The additional torque $\boldsymbol{\ell}$ comes from the internal structure of the liquid crystal, namely, from that there is a preferred direction. It translates into the non-zero asymmetric part of the stress tensor $\sigma$. The formula for this additional torque is \cite{chi2020surface}
\begin{equation}
\boldsymbol{\ell}=\boldsymbol{\mu}\boldsymbol{\nu}, \text{ where }\boldsymbol{\mu}=(\mu_{ij})_{i,j=1}^{d} \text{ and }
\mu_{ij}=-2K\sum\limits_{m,l,k=1}^{d}\epsilon_{ilk}Q_{lm}Q_{mk,j}.\label{def_of_ell}
\end{equation} 
Here $\epsilon_{ilk}$ is the Levi-Civita symbol. Finally,  we note that the orientation $\boldsymbol{\alpha}(t)$ and the angular velocity $\OMEGA$ are related via 
\begin{equation}
\dot{\boldsymbol{\alpha}}=\OMEGA\times \boldsymbol{\alpha}.\label{omega_vs_alpha}
\end{equation}
\begin{remark} 
Note that the term $\ell$ admits a simplified form:
\begin{eqnarray}
\int\limits_{\partial \mathcal{P}}\ell_i\,\text{d}S_x&=&-2K\sum\limits_{l,m,k=1}^{d}\int_{\partial \mathcal{P}}\epsilon_{ilk}Q_{lm}Q_{mk,j}\nu_j\,\text{d}S_x\nonumber\\
&=&-2W\sum\limits_{l,m,k=1}^{d}\int_{\partial \mathcal{P}}\epsilon_{ilk}Q_{lm}(Q_{\text{pref},mk}-Q_{mk})\,\text{d}S_x.\nonumber
\end{eqnarray}
Here we used boundary conditions \eqref{eq:time_lc_6}. Next, for any symmetric matrix $\boldsymbol{B}=(B_{ij})_{i,j=1}^{d}$ we have 
\begin{equation}
\sum\limits_{l,m,k}\epsilon_{ilk}B_{lm}B_{mk}=0.\label{prop_levi-civita}
\end{equation}
Indeed, from properties of the Levi-Civita symbol we have 
\begin{equation*}
\sum\limits_{l,m,k}\epsilon_{ilk}B_{lm}B_{mk}=-\sum\limits_{l,m,k}\epsilon_{ilk}B_{km}B_{ml}.
\end{equation*}
On the other hand, due to symmetry of $\boldsymbol{B}$ we have  
\begin{equation*}
\sum\limits_{l,m,k}\epsilon_{ilk}B_{lm}B_{mk}=\sum\limits_{l,m,k}\epsilon_{ilk}B_{km}B_{ml}.
\end{equation*}
Thus, we have \eqref{prop_levi-civita}, from which we have the simplified form expression (simplified because it is linear in $\Q$ as opposed to \eqref{def_of_ell} which is quadratic in $\Q$):
\begin{eqnarray}
\int\limits_{\partial \mathcal{P}}\ell_i\,\text{d}S_x&=&-2W\sum\limits_{l,m,k=1}^{d}\int\limits_{\partial \mathcal{P}}\epsilon_{ilk}Q_{lm}Q_{\text{pref},mk}\,\text{d}S_x.\label{simplified_ell}
\end{eqnarray}
\end{remark}

\begin{remark}\label{remark:dissipation}
	We end the introduction of the time-dependent problem with the energy identity satisfied by solutions of this problem.   
First, consider the energy functional: 
	\begin{eqnarray}
	\mathcal{E}(t)
	&=&\dfrac{m\boldsymbol{V}^2}{2}+\dfrac{I\boldsymbol{\OMEGA}\cdot\OMEGA}{2}+\dfrac{\rho}{2}\int\limits_{\Omega(t)}|\boldsymbol{u}+\boldsymbol{V}|^2\,\text{d}\x\nonumber \\
	&&+\int\limits_{\Omega(t)}\hat{\mathcal{F}}(\Q)+\frac{K}{2}|\nabla \Q|^2\,\text{d}\x+
	\dfrac{ W}{2}\int\limits_{\partial\mathcal{P}(t)}|\Q_{\mathrm{pref}}-\Q|^2\,\text{d}S_\x.
	\end{eqnarray} 
	Note that in the absence of the squirmer $\mathcal{P}(t)=\emptyset$ 
(or when the squirmer is passive, {\it i.e.,} $u_{\mathrm{sq}}=0$) 
and if the external field $F_{\mathrm{ext}}$ equals zero,
then the system is dissipative, that is, the energy is non-increasing:
	\begin{equation}
	\dfrac{\text{d}}{\text{d}t}\mathcal{E}(t)=-\mathcal{D}(t)\leq 0, \quad \text{where }\mathcal{D}(t):= \eta \int\limits_{\Omega(t)}|\nabla \u|^2 \,\text{d}\x+\Gamma\int\limits_{\Omega(t)}|\H|^2\,\text{d}\x.
	\end{equation} 
	On the other hand, when the system experiences the energy input from the self-propulsion mechanism and external field $F_{\mathrm{ext}}$, 
the energy identity takes the following form:
	\begin{equation}
	\dfrac{\text{d}}{\text{d}t}\mathcal{E}(t) =-\mathcal{D}(t)+\int\limits_{\partial \mathcal{P}(t)}\sigma\boldsymbol{\nu}\cdot u_{\text{sq}}\boldsymbol{\tau}\,\text{d}S_\x+\int\limits_{\Omega(t)}\H:F_{\mathrm{ext}}\,\text{d}\x.
	\end{equation}
	Note that boundary integral $\int_{\partial \mathcal{P}(t)}\sigma\boldsymbol{\nu}\cdot u_{\text{sq}}\boldsymbol{\tau}\,\text{d}S_\x$ 
contains nonlinear terms in $\Q$ which in turn depends on the higher order
regularity property of $\Q$.
This causes difficulty in the analysis of the time-dependent problem.
Hence in this paper, we will only present a short time existence result and
leave the long time behavior to future work.

\end{remark}

\subsection{Steady state PDE system}

In this work, we are also interested in the steady translational motion of the squirmer in the liquid crystal. In the context of the model \eqref{eq:time_lc_1}-\eqref{eq:time_lc_6} \& \eqref{eq:time_ft_1}-\eqref{eq:time_ft_2}, the steady motion is described by the stationary solution of this system:  
\begin{eqnarray}
&\rho\u\cdot \nabla\u  = \nabla \cdot (\sigma_{\text{hydro}}+\sigma_{\text{ela}}) ,\text{ in } \Pi \setminus \P_{\text{st}}, &\label{eq:stst_lc_1}\\
&\nabla\cdot \u = 0,\text{ in }\Pi \setminus \P_{\text{st}},&\label{eq:stst_lc_2}\\
&\u = u_{\text{sq}}(\boldsymbol{\alpha}_{\text{st}},\x)\boldsymbol{\tau},\text{ on }\partial \P_{\text{st}},&\label{eq:stst_lc_3}\\
&(\u\cdot \nabla) \Q -S(\nabla \u, \Q)= \Gamma \left( K\Delta \Q + \hat{\H}(\Q) \right) + F_{\text{ext}}(\Q, \Q_\infty),\text{ in }\Pi \setminus \P_{\text{st}},&\label{eq:stst_lc_4}\\
&\Q, \u, \nabla p \quad \text{periodic in} \, \Pi \label{eq:stst_lc_5}\\
&K\partial_\nu \Q =W(\Q_{\text{pref}}-\Q) \text{ on }\partial \P_{\text{st}}.&\label{eq:stst_lc_6}
\end{eqnarray}
Here, we assume that the squirmer moves with the velocity $V_{\text{st}}$ 
with the orientation angle $\boldsymbol{\alpha}_{\text{st}}$, 
both of which are independent of time. As equations 
\eqref{eq:stst_lc_1}-\eqref{eq:stst_lc_6} are written in the squirmer's frame,
the domain $\mathcal{P}_{\text{st}}$ occupied by the squirmer will then 
be stationary. Similar to the time dependent case, we use 
$\Omega=\Pi\setminus \mathcal{P}_{\text{st}}$ to denote the fluid region in the
steady state case. 

In this setting, the force and torque balances \eqref{eq:time_ft_1}, 
\eqref{eq:time_ft_2} become
\begin{eqnarray}
&&\boldsymbol{0}=\int\limits_{\partial \P_{\text{st}}} \sigma\boldsymbol{\nu}\, \text{d}S_\x,\label{force-balance-st}\\
&&\boldsymbol{0} = \int\limits_{\partial \P_{\text{st}}} \x \times \sigma\boldsymbol{\nu} +\boldsymbol{\ell} \,\text{d}S_\x. \label{torque-balance-st}
\end{eqnarray} 
The force balance \eqref{force-balance-st}, in view of periodic boundary conditions for $\u$ and $\Q$ together with $\u\cdot \boldsymbol{\nu}=0$ on $\partial P_{\text{st}}$ (follows from \eqref{eq:stst_lc_3}), leads to
\begin{eqnarray}
0=\int\limits_{\partial \P_{\text{st}}} \sigma\boldsymbol{\nu}\, \text{d}S_\x=-\int\limits_{\partial \Pi}\sigma\boldsymbol{\nu}\, \text{d}S_\x +\rho\int\limits_{\Omega}\u\cdot \nabla \u \,\text{d}x=-\int\limits_{\partial \Pi} p\boldsymbol{\nu}\,\text{d}S_x.\label{eq:force-balance-becomes-periodic-pressure}
\end{eqnarray}
Therefore, since $\nabla p$ is periodic, as imposed in \eqref{eq:stst_lc_5}, it follows from \eqref{eq:force-balance-becomes-periodic-pressure} that $p(\x)$ is periodic in $\Pi$. Indeed, the fact that $\nabla p$ is periodic implies that   
\begin{equation}
p (\x)= \boldsymbol{m} \cdot \x+ p^{\text{per}}(\x),  \label{eq:lin+per}
\end{equation}
where $p^{\text{per}}(\x)$ is a function which is periodic in $\Pi$ and $\boldsymbol{m}\in \mathbb R^{d}$. Substitution of \eqref{eq:lin+per} into \eqref{eq:force-balance-becomes-periodic-pressure} implies that $\boldsymbol{m}=\boldsymbol{0}$ and $p(\x)=p^{\text{per}}(\x)$.
In this case, the force balance \eqref{force-balance-st} is satisfied regardless of squirmer's velocity $V_{\text{st}}$.  

We note that if an external force $\textbf{F}^{(e)}=\left\{F^{(e)}_i\right\}_{i=1}^{d}$ is applied on the squirmer, then the force balance in stationary case becomes  
\begin{equation}
\int\limits_{\partial \P_{\text{st}}} \sigma\boldsymbol{\nu}\, \text{d}S_\x+\textbf{F}^{(e)}=0
\end{equation}
which due to the same arguments as in derivation of \eqref{eq:force-balance-becomes-periodic-pressure} is equivalent to 
\begin{equation}
-\int\limits_{\partial \Pi} p\boldsymbol{\nu}\, \text{d}S_\x+\textbf{F}^{(e)}=0.
\end{equation}
Using \eqref{eq:lin+per} and the divergence theorem for the first term in the equation above we get 
\begin{equation}
|\Pi|\boldsymbol{m}=\textbf{F}^{(e)}.
\end{equation}
Therefore, an external force results in the pressure difference
\begin{equation}
F^{(e)}_i=\dfrac{L}{2}[p]_i, \, i=1,...,d, \quad \text{where }[p]_i=p|_{x_i=L}-p|_{x_i=-L}.\label{pressure-difference}
\end{equation}
In terms of the force balance, the periodic problem \eqref{eq:stst_lc_1}-\eqref{eq:stst_lc_6} is in contrast with the analogous problem in the exterior domain 
$\Pi=\mathbb R^{d}$. Namely, for the latter, we need to impose additional 
boundary conditions at $x =\infty$: 
$\u=-\boldsymbol{V}_{\text{st}}$ and $\Q=\Q_{\infty}$. Then
we would have obtained a Stokes-law-like force-velocity relation 
instead of the force-pressure relation \eqref{pressure-difference}. 

In this work, the squirmer swims due to self-propulsion only, without an external force, $\textbf{F}^{(e)}=\boldsymbol{0}$. Thus, we impose periodicity for the
 pressure $p$. Taking this into account, we define a weak solution of \eqref{eq:stst_lc_1}-\eqref{eq:stst_lc_6} as a couple $(\u,\Q)\in H^1_{\text{per}}(\Omega;\mathbb R^{d})\times H^2_{\text{per}}(\Omega;\mathbb R^{d\times d})$ such that equations \eqref{eq:stst_lc_2}, \eqref{eq:stst_lc_3} as well as the following two equalities hold for all $\boldsymbol{\psi}\in H^1_{\text{per}}(\Omega;\mathbb R^d)\cap\left\{\boldsymbol{\psi}|_{\partial\mathcal{P}_{\text{st}}}=0 \text{ and }\nabla \cdot \boldsymbol{\psi}=0\right\}$ and $\boldsymbol{\Phi}\in H^1_{\text{per}}(\Omega;\mathbb R^{d\times d})$ and every integral term is finite : 
\begin{eqnarray}
&&\eta \int\limits_{\Omega}\nabla \u:\nabla \boldsymbol{\psi}\,\text{d}x + \rho\int\limits_{\Omega}(\u\cdot \nabla)\u\cdot \boldsymbol{\psi}\,\text{d}x+\int\limits_{\Omega} \sigma_{\text{ela}}: \nabla \boldsymbol{\psi}\,\text{d}x=0.\label{weak_def_u}
\end{eqnarray}
\begin{eqnarray}
&&\Gamma\left[-K\int\limits_{\Omega}\nabla \Q:\nabla \boldsymbol{\Phi} \,\text{d}x + W\int\limits_{\partial\mathcal{P}_{\text{st}}}(\Q_{\text{pref}}-\Q):\boldsymbol{\Phi}\,\text{d}S_x\right]\nonumber\\&&
\hspace{80pt}-
\int\limits_{\Omega}\left((\u\cdot \nabla)\Q-S(\nabla \u,\Q)\right):\boldsymbol{\Phi}\,\text{d}x+ \int_{\Omega} F_{\text{ext}}:\boldsymbol{\Phi} \,\text{d}x=0.\label{weak_def_Q}
\end{eqnarray}


\section{Main results}
\label{sec:main-result}
Here we present our three main results.

Our first main result is the existence of a weak solution of the steady state problem \eqref{eq:stst_lc_1}-\eqref{eq:stst_lc_6}.
For the sake of clarity, we restrict our attention to the case when the shape parameter $\xi$ is $0$. Under this simplification, we can represent $\sigma_{\text{ela}}$ as 
\begin{equation}
\sigma_{\text{ela}}=-K\nabla \Q \astrosun \nabla \Q +\sigma_{a}, \text{ where }\sigma_a(\Q,\H)=\Q\H-\H\Q=K(\Q\Delta\Q-\Delta \Q\Q)
\end{equation}
and the term $S$ given by \eqref{def_of_S} satisfies the following equality
\begin{equation}\label{property_of_S}
S(\nabla \u, \Q):\Q =\text{Tr}(S(\nabla \u,\Q)\Q)=0. 
\end{equation}    

We also impose
\begin{equation}\label{property_of_F_ext}
F_{\text{ext}}(\Q,\Q_{\infty}):\Q=0.
\end{equation}
This condition holds for our specific choices of  $F_{\text{ext}}(\Q,\Q_{\infty})$ given by \eqref{2d_f_ext} or \eqref{3d_f_ext}.

\begin{thm}\it
Suppose $\xi=0$.
	There is a constant $C>0$ independent of $K$, $W$, $\Q_{\text{pref}}$, $\Q_{\infty}$, $\eta$, $\rho$, $u_{\text{sq}}$, $\Gamma$, $\boldsymbol{\alpha}_{\text{st}}$ such that if  
	\begin{equation} 
	\eta>2C\rho\|u_{\mathrm{sq}}\|_{L^{\infty}(\Omega)}\text{ and }\Gamma>2C\left(\dfrac{1}{\sqrt{K}}\|u_{\mathrm{sq}}\|_{L^{\infty}(\Omega)}+\|u_{\mathrm{sq}}\|_{W^{1,\infty}(\Omega)}\right),\label{large_enough}
	\end{equation} 
	then there is a weak solution $(\u,\Q) \in (H^{1}(\Omega), H^{2}(\Omega))$ of \eqref{eq:stst_lc_1}-\eqref{eq:stst_lc_6}.
	\label{theorem_existence}
\end{thm}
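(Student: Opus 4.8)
\emph{Proof strategy.} The plan is to approximate \eqref{eq:stst_lc_1}--\eqref{eq:stst_lc_6} by a Galerkin scheme (or, equivalently, a Leray--Schauder fixed point), to derive a priori bounds from the natural energy identity of the Beris--Edwards system, and to pass to the limit by compactness. As a preliminary step we lift the slip boundary condition: since $u_{\mathrm{sq}}\boldsymbol\tau$ is tangential to $\partial\mathcal P_{\mathrm{st}}$ it has zero flux, so there is a $\Pi$-periodic, divergence-free field $\u_b$ with $\u_b=u_{\mathrm{sq}}\boldsymbol\tau$ on $\partial\mathcal P_{\mathrm{st}}$ and $\u_b\equiv\boldsymbol0$ near $\partial\Pi$; localising the construction in a thin collar of $\partial\mathcal P_{\mathrm{st}}$ one can arrange $\|\u_b\|_{L^\infty(\Omega)}\le C\|u_{\mathrm{sq}}\|_{L^\infty}$ and $\|\u_b\|_{W^{1,\infty}(\Omega)}\le C\|u_{\mathrm{sq}}\|_{W^{1,\infty}}$. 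Writing $\u=\u_b+\boldsymbol v$ reduces the problem to finding $\boldsymbol v$ in $V:=\{\boldsymbol\psi\in H^1_{\mathrm{per}}(\Omega;\mathbb R^d):\nabla\cdot\boldsymbol\psi=0,\ \boldsymbol\psi|_{\partial\mathcal P_{\mathrm{st}}}=\boldsymbol0\}$ and $\Q\in H^2_{\mathrm{per}}(\Omega;\mathbb R^{d\times d})$; the anchoring condition \eqref{eq:stst_lc_6} needs no lifting since it is built into the Robin term in \eqref{weak_def_Q}.

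\emph{Approximation.} We would discretise $\boldsymbol v$ in a Galerkin space $V_n\subset V$ and $\Q$ in the span $Z_n$ of the first $n$ eigenfunctions of $-\Delta$ with the (homogeneous) Robin boundary condition associated with \eqref{eq:stst_lc_6}, so that $\Delta\Q_n$ is again available as a test function, which is needed below. Existence of a solution $(\boldsymbol v_n,\Q_n)\in V_n\times Z_n$ of the corresponding finite-dimensional nonlinear system is a standard corollary of Brouwer's fixed-point theorem, once the a priori bound of the next paragraph is in hand. Passing $n\to\infty$ then uses the compact embeddings $H^2(\Omega)\hookrightarrow\hookrightarrow H^1(\Omega)\cap C(\overline\Omega)$, $H^1(\Omega)\hookrightarrow\hookrightarrow L^4(\Omega)$ and compactness of the trace $H^2(\Omega)\to L^2(\partial\mathcal P_{\mathrm{st}})$, which are exactly what is required to pass to the limit in the quadratic convective and $\nabla\Q\astrosun\nabla\Q$ terms, in the cubic terms $\hat\H(\Q)$ and $F_{\mathrm{ext}}(\Q,\Q_\infty)$, and in the boundary integral. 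The hypotheses \eqref{large_enough} enter only through the a priori estimate.

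\emph{The a priori estimate.} We would test the (discrete) momentum equation with $\boldsymbol v_n$ and the (discrete) $\Q$-equation with the molecular field $\H(\Q_n)=K\Delta\Q_n+\hat\H(\Q_n)$ (at the Galerkin level realised by testing with $\Delta\Q_n$ and with $\Q_n$ and combining) and add the two identities. Using $\nabla\cdot\u_n=0$ and $\u_n\cdot\boldsymbol\nu=0$ on $\partial\mathcal P_{\mathrm{st}}$, the terms $\rho(\boldsymbol v_n\cdot\nabla)\boldsymbol v_n\cdot\boldsymbol v_n$ and $(\boldsymbol v_n\cdot\nabla)\Q_n:\hat\H(\Q_n)$ vanish; the Ericksen-stress term $-K\nabla\Q_n\astrosun\nabla\Q_n:\nabla\boldsymbol v_n$ cancels the $\boldsymbol v_n$-part of $-(\u_n\cdot\nabla)\Q_n:K\Delta\Q_n$; since $\xi=0$ one has the pointwise identity $\sigma_a:\nabla\u=-S(\nabla\u,\Q):\H$ (because $\sigma_a=\Q\H-\H\Q$ and $S(\nabla\u,\Q)=\A\Q-\Q\A$ are both antisymmetric), whence $\int_\Omega\sigma_a:\nabla\boldsymbol v_n+\int_\Omega S(\nabla\u_n,\Q_n):\H(\Q_n)=-\int_\Omega\sigma_a:\nabla\u_b$; and \eqref{property_of_S} together with \eqref{property_of_F_ext} (the latter also yielding $F_{\mathrm{ext}}:\hat\H(\Q)=0$) remove still further terms. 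What is left is
\[
\eta\|\nabla\boldsymbol v_n\|_{L^2}^2+\Gamma\|\H(\Q_n)\|_{L^2}^2=\mathcal R_n,
\]
where $\mathcal R_n$ collects only ``lifting errors'' built from $\u_b$: $-\eta\int\nabla\u_b:\nabla\boldsymbol v_n$, $-\rho\int((\u_b+\boldsymbol v_n)\cdot\nabla)\u_b\cdot\boldsymbol v_n$, $\int(\u_b\cdot\nabla)\Q_n:\H(\Q_n)$, $\int\sigma_a:\nabla\u_b$, and $-K\int F_{\mathrm{ext}}(\Q_n):\Delta\Q_n$. To keep the cubic terms under control we would first establish the maximum-principle bound $\|\Q_n\|_{L^\infty}\le q_\infty$ (from the equation satisfied by $|\Q_n|^2$, using \eqref{property_of_S}, \eqref{property_of_F_ext} and $\|\Q_{\mathrm{pref}}\|=q_\infty$); testing the $\Q$-equation with $\Q_n$ then gives $K\|\nabla\Q_n\|_{L^2}^2\lesssim 1+\|\H(\Q_n)\|_{L^2}$, and elliptic regularity for the Robin problem yields $\|\Q_n\|_{H^2}\lesssim 1+\|\H(\Q_n)\|_{L^2}$. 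With these facts, Young's inequality, the Poincar\'e inequality on $V$, and the bounds on $\u_b$, every term of $\mathcal R_n$ except $-\rho\int(\boldsymbol v_n\cdot\nabla)\u_b\cdot\boldsymbol v_n$ is absorbed into $\tfrac\eta{16}\|\nabla\boldsymbol v_n\|^2+\tfrac\Gamma8\|\H(\Q_n)\|^2+(\text{data})$; the remaining one equals $-\rho\int(\boldsymbol v_n\cdot\nabla)\boldsymbol v_n\cdot\u_b$ and is bounded by $\rho C_P\|\u_b\|_{L^\infty}\|\nabla\boldsymbol v_n\|^2\le\tfrac\eta4\|\nabla\boldsymbol v_n\|^2$ precisely by the first inequality in \eqref{large_enough}, while the second inequality in \eqref{large_enough} makes the $\|u_{\mathrm{sq}}\|_{W^{1,\infty}}$-weighted contributions to $\mathcal R_n$ subordinate to $\Gamma\|\H(\Q_n)\|^2$. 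This yields an $n$-uniform bound on $\|\boldsymbol v_n\|_{H^1}$ and $\|\Q_n\|_{H^2}$, and the proof is completed by the limiting procedure described above.

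\emph{Main obstacle.} The crux is the a priori estimate, where three difficulties meet. First, the molecular field $\H=K\Delta\Q+\hat\H(\Q)$ is only in $L^2$, which is why one must work at the Galerkin level with a basis adapted to \eqref{eq:stst_lc_6} rather than testing the continuous equation with $\H$ directly. Second, the cubic nonlinearities in $\hat\H$ and $F_{\mathrm{ext}}$ must be tamed, which is possible only because the bound $\|\Q\|\le q_\infty$ is available a priori. Third --- and this is the genuinely restrictive point --- the lifting errors generated by the self-propulsion, above all $\rho\int(\boldsymbol v\cdot\nabla)\boldsymbol v\cdot\u_b$ and its $\Q$-counterparts, do not close on their own, and it is precisely to absorb them that the largeness conditions \eqref{large_enough} on $\eta$ and $\Gamma$ are imposed; this is the steady-state analogue of the obstruction to the long-time analysis noted in Remark~\ref{remark:dissipation}, which here is circumvented by re-expressing the boundary power $\int_{\partial\mathcal P}\sigma\boldsymbol\nu\cdot u_{\mathrm{sq}}\boldsymbol\tau$ through the lifting $\u_b$.
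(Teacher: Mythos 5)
Your overall architecture (lifting of the slip data, Galerkin scheme, energy estimate built on the cancellations $\int(\u\cdot\nabla)\boldsymbol v\cdot\boldsymbol v=0$, $\sigma_a:\nabla\u+S(\nabla\u,\Q):\H=0$, \eqref{property_of_S}, \eqref{property_of_F_ext}, with \eqref{large_enough} used exactly to absorb the lifting errors, then weak compactness) is the same as the paper's. However, there are two genuine gaps in the way you set up the approximation, and they are precisely the points the paper's construction is designed to avoid.

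First, your control of the cubic terms rests on the claim that the maximum principle $\|\Q_n\|_{L^\infty}\le q_\infty$ holds \emph{for the Galerkin approximations}. The maximum-principle argument (as in Lemma~\ref{L_inf_Q}) multiplies the pointwise equation by $\Q$ and then by the nonlinear cut-off $(|\Q|^2-\alpha^2)_+$; at the discrete level $\Q_n$ satisfies the equation only against test functions in the finite-dimensional space $Z_n$, and $(|\Q_n|^2-\alpha^2)_+\Q_n$ (or any nonlinear function of $\Q_n$) is not in $Z_n$, so no $L^\infty$ bound on $\Q_n$ is available. Without it, the untruncated cubic terms $\hat\H(\Q_n)$ and the quadratic $F_{\mathrm{ext}}(\Q_n)$ in your remainder $\mathcal R_n$ (e.g.\ $\int F_{\mathrm{ext}}(\Q_n):\Delta\Q_n$) are not dominated by the left-hand side, so neither the a priori bound nor the Brouwer existence step closes. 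The paper resolves this by truncating the potential and the external field at the approximation stage (\eqref{def_of_h_m}, \eqref{def_of_f_m}), proving existence and uniform bounds for the truncated system, and invoking the $L^\infty$ bound of Lemma~\ref{L_inf_Q} only for the limiting PDE solution in order to remove the truncation.

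Second, the asserted discrete energy identity $\eta\|\nabla\boldsymbol v_n\|^2+\Gamma\|\H(\Q_n)\|_{L^2}^2=\mathcal R_n$ cannot be obtained in your scheme: to produce $\Gamma\|\H(\Q_n)\|^2$ you must test the $\Q$-equation with $\H(\Q_n)=K\Delta\Q_n+\hat\H(\Q_n)$, but $\hat\H(\Q_n)$ is cubic in $\Q_n$ and does not lie in the span of Robin eigenfunctions; testing with $\Delta\Q_n$ and $\Q_n$ and ``combining'' yields $\Gamma K\|\Delta\Q_n\|^2+\Gamma\int\hat\H(\Q_n):\Delta\Q_n+\dots$, not $\Gamma\|\H(\Q_n)\|^2$, and the missing coercivity in $\H$ is exactly what your absorption of the $\Gamma$-terms under \eqref{large_enough} relies on. The paper's device is to take the pair $(\u_m,\H_m)$ as the Galerkin unknowns in \eqref{weak_galerkin_u}--\eqref{weak_galerkin_Q}, so that $\H_m$ is itself an admissible test function, and to recover $\Q_m$ from $\H_m$ by solving the auxiliary semilinear Robin problem \eqref{aux_elliptic}, whose $H^1$/$H^2$ bounds in terms of $\|\H_m\|_{L^2}$ (Lemma~\ref{lemma_existence_of_q_m}) replace the estimates you tried to extract from the $L^\infty$ bound. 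If you either adopt this $(\u,\H)$ discretization or keep your $\Q$-basis but switch the left-hand coercive quantity to $\Gamma K\|\Delta\Q_n\|^2$ with truncated nonlinearities, the rest of your argument (the cancellations, the treatment of the lifting errors, and the role of \eqref{large_enough}) goes through as in the paper.
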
  

\begin{remark}	
	The condition \eqref{large_enough} holds when parameters $\eta$ and $\Gamma$ are sufficiently large enough, given all other parameters. The condition \eqref{large_enough} also holds when $u_{\mathrm{sq}}$ is sufficiently small which means that self-propulsion is small. In the limit $u_{\mathrm{sq}}\to 0$ we recover existence of steady state for a passive swimmer without a condition on parameters.   
\end{remark}

\begin{remark}
Theorem~\ref{theorem_existence} states the existence of a weak solution of \eqref{eq:stst_lc_1}-\eqref{eq:stst_lc_6} for all orientation angles $\boldsymbol{\alpha}_{\text{st}}$. As discussed in Section~\ref{sec:steady-state}, the force balance \eqref{force-balance-st} is satisfied since weak solutions of \eqref{eq:stst_lc_1}-\eqref{eq:stst_lc_6}  have periodic pressure $p$. To determine the steady orientation $\boldsymbol{\alpha}_{\text{st}}$, one needs to consider additionally the torque balance \eqref{torque-balance-st} which is satisfied for $\boldsymbol{\alpha}_{\text{st}}=\dfrac{k\pi}{2}$ ($k$ is an integer). We note that it follows from our numerical studies in \cite{chi2020surface} that a squirmer can swim steadily only if it is oriented parallel, $\boldsymbol{\alpha}_{\text{st}}=k\pi$, or perpendicularly, $\boldsymbol{\alpha}_{\text{st}}=(2k-1)\frac{\pi}{2}$, to the vector $\boldsymbol{e}_x$, the liquid crystal orientation in the absence of the squirmer.   
\end{remark}

\medskip 

Our second main result is the local-in-time existence for the time dependent problem \eqref{eq:time_lc_1}-\eqref{eq:time_lc_6} with \eqref{eq:time_ft_1} and \eqref{eq:time_ft_2}. Here, we simplify the system by considering a spherical squirmer $\mathcal{P}(t)$ in its own moving frame so that $\Omega$ and $\mathcal{P}$ are independent of time. Under this assumption, the torque balance equation can be simplified into
\begin{eqnarray}
&&I\frac{\text{d}\boldsymbol{\omega}}{\text{d}t} = \int\limits_{\partial \P} \x \times \sigma\boldsymbol{\nu}  + \boldsymbol{\ell}\,\text{d}S_\x, \label{eq:time_ft_2_rewrite}
\end{eqnarray}
where the rotating inertia $I(t) = I\mathbb{I}$ becomes also independent of time and isotropic. 

\begin{thm}\it
	Suppose that $(\u_{\mathrm{sq}}, \Q_{\mathrm{pref}}) \in H^{5/2}(\partial \P) \times H^{5/2}(\partial \P)$ and the initial data $(\u_0, \Q_0) \in H^{2}_{\sigma}(\Omega) \times H^{3}(\Omega)$, where $H_{\sigma}^2(\Omega)=H^2(\Omega)\cap\left\{\nabla \cdot \u=0 \right\}$. Then there exists $T > 0$ and a unique solution $(\u, \Q)$ to the system \eqref{eq:time_lc_1}-\eqref{eq:time_lc_6} with \eqref{eq:time_ft_1} and \eqref{eq:time_ft_2_rewrite} such that 
	\begin{eqnarray*}
	&&\u \in H^{1}(0, T;H^{2}_{\sigma}(\Omega)) \cap H^{2}(0, T;L^{2}_{\sigma}(\Omega)), \\
	&&\Q \in H^{1}(0, T;H^{3}(\Omega)) \cap H^{2}(0, T;H^{1}(\Omega)).
	\end{eqnarray*} 
\label{theorem-finite-time}
\end{thm}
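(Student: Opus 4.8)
The plan is to treat \eqref{eq:time_lc_1}--\eqref{eq:time_lc_6} together with \eqref{eq:time_ft_1}, \eqref{eq:time_ft_2_rewrite}, \eqref{omega_vs_alpha} as a quasilinear problem and solve it by a Banach fixed-point argument on a short time interval $[0,T]$, built on $L^2$-maximal-regularity estimates for the linearized subsystems. The iteration variables are the pair $(\u,\Q)$ in the anisotropic solution space stated in the theorem, together with the rigid-body data $(\V,\OMEGA,\boldsymbol{\alpha})$.

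First I would homogenize the boundary conditions. Because $u_{\mathrm{sq}}(\boldsymbol{\alpha}(t),\cdot)\boldsymbol{\tau}$ vanishes at the poles and has zero flux through $\partial\P$, and $\OMEGA(t)\times\x$ likewise has zero flux through $\partial\P$, both admit divergence-free extensions to $\Omega$ supported near $\partial\P$ (hence periodic on $\partial\Pi$), linear in $\OMEGA$ and smooth in $\boldsymbol{\alpha}$, with $H^3(\Omega)$-norm controlled by $\|u_{\mathrm{sq}}\|_{H^{5/2}(\partial\P)}$ and the sizes of $\OMEGA,\boldsymbol{\alpha}$; subtracting these lifts $\mathbf{R}[\OMEGA]$, $\mathbf{g}[\boldsymbol{\alpha}]$, and a lift of $\Q_{\mathrm{pref}}$ homogenizing the anchoring condition \eqref{eq:time_lc_6}, reduces the problem to one with zero Dirichlet data for the new velocity and lower-order inhomogeneities. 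The linear building blocks are then: (i) the generalized Stokes system on $\Omega$ with zero velocity on $\partial\P$, periodicity on $\partial\Pi$, and the incompressibility constraint, which enjoys $L^2$-maximal regularity — after one time differentiation and using $\u_0\in H^2_\sigma(\Omega)$ this yields $\u\in H^1(0,T;H^2_\sigma)\cap H^2(0,T;L^2_\sigma)$ and $p\in H^1(0,T;H^1)$, so that $\sigma\boldsymbol{\nu}$ has an $L^2(\partial\P)$ trace; (ii) the linear parabolic problem $\partial_t\Q-\Gamma K\Delta\Q=F$ with the Robin condition coming from \eqref{eq:time_lc_6}, which, with $\Q_0\in H^3(\Omega)$ and the compatibility conditions at $t=0$ that the hypotheses entail, gives $\Q\in H^1(0,T;H^3)\cap H^2(0,T;H^1)$; and (iii) the ordinary differential equations \eqref{eq:time_ft_1}, \eqref{eq:time_ft_2_rewrite}, \eqref{omega_vs_alpha} for $(\V,\OMEGA,\boldsymbol{\alpha})$, whose right-hand sides are the boundary functionals $\int_{\partial\P}\sigma\boldsymbol{\nu}$ and $\int_{\partial\P}\x\times\sigma\boldsymbol{\nu}+\boldsymbol{\ell}$ — note that $\sigma_{\mathrm{ela}}\boldsymbol{\nu}$ and $\boldsymbol{\ell}$, see \eqref{def_sigma_ela}, \eqref{def_of_ell}, involve $\Delta\Q$ and $\nabla\Q$ restricted to $\partial\P$, which is precisely what the $H^3(\Omega)$-regularity of $\Q$ controls.

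Next I would define the solution map $\mathcal{T}$ on a closed ball $B_R$ of the product maximal-regularity space $X_T$: given $(\tilde\u,\tilde\Q,\tilde\V,\tilde\OMEGA,\tilde{\boldsymbol{\alpha}})\in B_R$, evaluate all nonlinear and coupling terms at this argument — the advection terms $\tilde\u\cdot\nabla\tilde\u$ and $\tilde\u\cdot\nabla\tilde\Q$, the strain coupling $S(\nabla\tilde\u,\tilde\Q)$, the polynomial terms $\hat{\H}(\tilde\Q)$ and $F_{\mathrm{ext}}(\tilde\Q,\Q_\infty)$, the elastic stress $\sigma_{\mathrm{ela}}(\tilde\Q)$ together with its divergence (which contributes $\nabla\tilde\Q\,\Delta\tilde\Q$ and $\tilde\Q\,\nabla\Delta\tilde\Q$, forcing the $H^3$ framework), the transported lifts $\partial_t\mathbf{R}[\tilde\OMEGA]+\partial_t\mathbf{g}[\tilde{\boldsymbol{\alpha}}]$ and the term $\rho\,\mathrm{d}\tilde\V/\mathrm{d}t$ — and feed them as data into the linear problems (i)--(iii), collecting the outputs. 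A fixed point is a solution. The estimates use the embeddings $X_T\hookrightarrow C([0,T];H^2(\Omega))\times C([0,T];H^3(\Omega))$ for the field components (plus Hölder-in-time control of $(\V,\OMEGA,\boldsymbol{\alpha})$), the multilinear structure of the nonlinearities, and the standard dichotomy that the linear solution operators split into an initial-data part whose operator norm is bounded uniformly for $T\le 1$ and a forcing part whose norm carries a positive power $T^{\gamma}$. Combining these gives $\mathcal{T}(B_R)\subset B_R$ and the contraction estimate for $T$ small enough, hence a unique fixed point; uniqueness in the full class follows by applying the same bilinear estimates to the difference of two solutions and closing with Grönwall's inequality.

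I expect the principal difficulty to be the high-regularity bookkeeping dictated by the boundary coupling. The torque balance \eqref{eq:time_ft_2_rewrite} and the elastic stress \eqref{def_sigma_ela} require $\sigma\boldsymbol{\nu}$, and hence $\Delta\Q$ and $p$, in $L^2(\partial\P)$; this is why the solution must sit one derivative above the usual maximal-regularity level, which forces one to time-differentiate the Stokes and $\Q$ equations, verify the corresponding $t=0$ compatibility conditions, and check that the lifts $\mathbf{R}[\OMEGA(t)]$, $\mathbf{g}[\boldsymbol{\alpha}(t)]$ inherit enough time regularity from the ODEs (using $\dot{\boldsymbol{\alpha}}=\OMEGA\times\boldsymbol{\alpha}$ and the regularity of $\mathrm{d}\V/\mathrm{d}t$). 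A second, more technical point is the apparent circularity in \eqref{eq:time_lc_1}: the spatially constant term $\rho\,\mathrm{d}\V/\mathrm{d}t$ enters as a forcing while $\mathrm{d}\V/\mathrm{d}t$ is itself defined through the stress of the solution via \eqref{eq:time_ft_1}; keeping $\V$ among the fixed-point variables resolves this for $t>0$ (the time integration in the ODE supplies the $T^{\gamma}$ gain), while at $t=0$ one must invoke the invertibility of an added-mass-type operator $m\mathbb{I}+M_{\mathrm{add}}$ with $M_{\mathrm{add}}\succeq 0$ to determine $\dot\V(0)$ consistently.
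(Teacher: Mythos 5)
Your proposal follows essentially the same route as the paper: homogenize the boundary data with Stokes-type and anchoring lifts, place $(\u,\Q)$ in the spaces $H^{2}(0,T;L^{2}_\sigma)\cap H^{1}(0,T;H^{2}_\sigma)$ and $H^{2}(0,T;H^{1})\cap H^{1}(0,T;H^{3})$ with $(\V,\OMEGA,\boldsymbol{\alpha})$ kept among the fixed-point variables, and contract the map obtained by inverting the linear Stokes/heat part against the nonlinearities, using constants that vanish as $T\to 0$. This is exactly the paper's scheme $\mathcal{U}_h=\mathcal{L}^{-1}\mathcal{J}(\mathcal{U}_h)$ with offset functions $\u_{\text{os}},\Q_{\text{os}}$ and Lipschitz bounds $C(T,R)\to 0$, so your extra remarks on $t=0$ compatibility and the added-mass determination of $\dot{\V}(0)$ are refinements rather than a different argument.
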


\begin{remark}
We adapt techniques from \cite{AbeDolLiu2013} to prove this result in Section \ref{sec:time-dependent}. The main idea is to rewrite the problem in a suitable Banach space and then use the Banach's fixed point theorem. However, the difference from \cite{AbeDolLiu2013} is an additional difficulty coming from  presence of the squirmer which requires to consider inhomogeneous boundary conditions as well as force and torque balances \eqref{eq:time_ft_1} and \eqref{eq:time_ft_2_rewrite}. The terms in balance equations involve boundary integrals with derivatives in integrands. It led to that the spatial regularity of the solution couple $(\u,\Q)$ is higher than it is required by a weak solution of the PDE problem \eqref{eq:time_lc_1}-\eqref{eq:time_lc_6}.       	
\end{remark}

\medskip 

Our third main result is a formal homogenization limit in the system \eqref{eq:stst_lc_1}-\eqref{eq:stst_lc_6}. This result can be considered as the derivation of a simplified model describing motion of a colony with periodically distributed squirmers (e.g., bacterial colony) in the liquid crystal.  

Specifically, we introduce a small parameter 
$
\varepsilon:=\frac{L}{\delta_L},
$
where $L$ is the linear size of a periodic box containing a single squirmer and $\delta_L$ is the observation scale. Next, we consider the problem \eqref{eq:stst_lc_1}-\eqref{eq:stst_lc_6} where all the parameters are written in 
physical dimensions. Details of non-dimensionalization are relegated to 
Appendix~\ref{sec:homo_rescaling}.  After the non-dimensionalization, we consider the steady state problem \eqref{eq:stst_lc_1}-\eqref{eq:stst_lc_6} in a periodic box $\Pi_{\ve}=[-\varepsilon,\varepsilon]^d$. The squirmer occupies domain $\mathcal{P}_{\ve}$ whose linear size is $\sim \varepsilon$. Consider the domain $U$ which is $\mathbb R^{d}$ or a sub-domain of $\mathbb R^d$ composed of many periodic boxes $\Pi_{\ve}$ such that the linear size of $U$ is of the order $1$ 
with respect to $\varepsilon$. Then \eqref{eq:stst_lc_1}-\eqref{eq:stst_lc_6} becomes (see Appendix~\ref{sec:homo_rescaling} for details):
\begin{eqnarray}
&& \varepsilon\gamma\Delta\Q+\tilde{a}\, \Q-\tilde{c}\, \Q\text{Tr}(\Q^2)+S(\nabla \tilde{\u}, \Q) - \tilde{\u} \cdot \nabla \Q+\tilde{\zeta}\, \tilde{F}_\text{ext}={\bf G}(\x)\text{ in }\Omega_{\ve},\label{eq:pre_homo_Q_eq_re}\\
&& \partial_{\boldsymbol{\nu}} \Q = \tilde{W}(\Q_\text{pref} - \Q)  \text{ on } \partial \P_{\ve}, \label{eq:pre_homo_Q_bc_re}\\
&& \varepsilon\tilde{\rho}(\tilde\u\cdot\nabla)\tilde{\u}-\varepsilon\tilde{\eta}\Delta \tilde{\u} + \nabla\tilde{p} = \varepsilon^2 \kappa\nabla\cdot(\nabla {\Q} \astrosun \nabla {\Q} + {\Q}\Delta {\Q} - \Delta {\Q} {\Q})+{\bf F}(\x) \text{ in }\Omega_{\ve}, \label{eq:pre_homo_u_eq_re} \\
&& \tilde{\u} = \ve\tilde{u}_{\text{sq}}\boldsymbol{\tau} \text{ on } \partial \P_{\ve} \label{eq:pre_homo_u_bc_re}.
\end{eqnarray} 
Here $\Omega_{\ve} = \Pi_{\ve} \setminus \mathcal{P}_{\ve}$ and $\nabla \cdot \tilde{\u}=0$. 
Here for simplicity, we assume the corresponding force and torque balance \eqref{force-balance-st} and \eqref{torque-balance-st} are automatically satisfied. This physically means that the bacterial colony has reached the steady state when every squirmer swims along a stable direction. ${\bf G}$ and ${\bf F}$ are given  external fields, varying spatially at the scale $1$ (independent of $\ve$). Parameters $\gamma,\tilde{a},\tilde{c},\tilde{\zeta},\tilde{W},\tilde{\rho},\tilde{\eta},\kappa$ are explained in Appendix~\ref{sec:homo_rescaling}.   

Our contribution in this regard is the identification of the homogenized limit
$(\u^{(h)},\Q^{(h)})$ of $(\ve^{-1}\tilde{\u},\Q)$.
We relegate the presentation of the limiting equations as well as their 
derivation via formal two-scale asymptotic expansions to Section~\ref{sec:homo_expansion}. We comment here that $\Q^{(h)}$ solves an \emph{algebraic equation} \eqref{eq_for_Q_h_appendix}, whereas $\u^{(h)}$ admits the representation \eqref{u_represent_Darcy} similar to that in the \emph{Darcy's law}.

\section{Existence of steady state $-$ proof of Theorem~\ref{theorem_existence}}
\label{sec:steady-state}

In this section, we address solvability of steady state PDE system \eqref{eq:stst_lc_1}-\eqref{eq:stst_lc_6}.  
To this end, we first show in Subsection~\ref{subsec:max} that if a solution of the system \eqref{eq:stst_lc_1}-\eqref{eq:stst_lc_6} exists (with $\xi=0$), 
then it satisfies a maximum principle for $\|\Q\|$. Next, in Subsections \ref{subsec:galerkin-intro}, \ref{subsec:galerkin-energy-estimate}, \ref{subsec:galerkin-existence}, and \ref{subsec:galerkin-limit} we prove the existence for the system \eqref{eq:stst_lc_1}-\eqref{eq:stst_lc_6} where nonlinearities $\hat{\H}$ and $F_{\text{ext}}$ are truncated for large values of $\Q$. Finally, combination of the maximum principle and solvability of the truncated system implies the 
existence of a solution to the original system \eqref{eq:stst_lc_1}-\eqref{eq:stst_lc_6}. 

\subsection{$L^{\infty}$-bound on $\Q$} \label{subsec:max}
Here we adapt the strategy from \cite{GuiRod2015}. 
First, we introduce the number $q_*>0$ such that
\begin{equation}\label{def_of_q_star}
\Gamma\hat{\H}(\Q):\Q\leq 0 \text{ for all }|\Q|\geq q_*.
\end{equation}
Such a finite number $q_{*}$ exists since $F_{\text{ext}}(\Q,\Q_{\infty})$ is a quadratic polynomial of $\Q$ whereas $\hat{\H}$ is the third order polynomial 
with \textcolor{black}{a definite negative sign} in front the highest power.

\begin{lemma}\it
	Let $(\u,\Q)$ be a solution of \eqref{eq:stst_lc_1}-\eqref{eq:stst_lc_6}. 
	Then $\|\Q\|_{L^{\infty}} \leq\alpha$, where 
	\begin{equation}
	\alpha:= \max\left\{|\Q_{\mathrm{pref}}|,q_*\right\}.\label{def_of_alpha}
	\end{equation}
	\label{L_inf_Q}
\end{lemma}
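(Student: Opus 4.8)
The plan is to prove the $L^\infty$ bound via a maximum-principle argument applied to the scalar quantity $\phi := \|\Q\|^2 = \mathrm{Tr}(\Q^2)$, which is the natural object here since both the truncation threshold $q_*$ and the boundary constraint are phrased in terms of the Euclidean norm of $\Q$. First I would derive the PDE satisfied by $\phi$ in $\Omega$. Testing the $\Q$-equation \eqref{eq:stst_lc_4} against $\Q$ itself and using the algebraic identities already recorded in the excerpt, namely \eqref{property_of_S} ($S(\nabla\u,\Q):\Q=0$) and \eqref{property_of_F_ext} ($F_{\text{ext}}(\Q,\Q_\infty):\Q=0$), kills the stretching term and the external-field term. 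The advection term contributes $(\u\cdot\nabla)\Q:\Q = \tfrac12\u\cdot\nabla\phi$, and the Laplacian term gives $\Gamma K\,\Delta\Q:\Q = \tfrac{\Gamma K}{2}\Delta\phi - \Gamma K|\nabla\Q|^2$. Thus $\phi$ satisfies a scalar elliptic inequality of the form
\begin{equation*}
\tfrac{\Gamma K}{2}\Delta\phi - \tfrac12\,\u\cdot\nabla\phi = \Gamma K|\nabla\Q|^2 - \Gamma\,\hat{\H}(\Q):\Q \geq -\Gamma\,\hat{\H}(\Q):\Q ,
\end{equation*}
so that wherever $\phi$ attains a large interior maximum the right-hand side forces, via \eqref{def_of_q_star}, a contradiction unless $\phi\le q_*^2$ there.

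Next I would run the actual maximum-principle argument. Since $\u$ is bounded (it lies in $H^1$, and more to the point $\nabla\cdot\u=0$ so the first-order term is in divergence form), the operator $L v := \tfrac{\Gamma K}{2}\Delta v - \tfrac12\u\cdot\nabla v$ is uniformly elliptic with bounded drift, and the weak maximum principle applies. Suppose for contradiction that $\max_{\overline\Omega}\phi > \alpha^2 = \max\{|\Q_{\text{pref}}|^2, q_*^2\}$. The maximum over the compact set $\overline\Omega$ is attained either on $\partial\mathcal{P}_{\text{st}}$, or on $\partial\Pi$, or at an interior point. On $\partial\Pi$ periodicity \eqref{eq:stst_lc_5} means there is no genuine boundary there, so a maximum "on $\partial\Pi$" is really an interior maximum of the periodic extension. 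At an interior maximum point $\x_0$ we have $\nabla\phi(\x_0)=0$ and $\Delta\phi(\x_0)\le 0$ (in the appropriate weak/viscosity sense, or via the strong maximum principle after noting $\phi\in H^2\subset C^0$ and is in fact smooth enough by elliptic regularity of the $\Q$-equation), so $L\phi(\x_0)\le 0$; but $-\Gamma\hat\H(\Q):\Q \geq 0$ at $\x_0$ because $\phi(\x_0)>q_*^2$ means $|\Q(\x_0)|>q_*$, hence by \eqref{def_of_q_star} $\Gamma\hat\H(\Q):\Q\le 0$ there — and combined with the $\Gamma K|\nabla\Q|^2\ge0$ term this would actually force $L\phi(\x_0)\ge0$ with equality only in a degenerate case; pushing the strict inequality through (e.g. by the standard trick of first bounding on a slightly enlarged superlevel set, or by the strong maximum principle ruling out a nonconstant function attaining an interior max) yields the contradiction. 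Hence the maximum is attained on $\partial\mathcal{P}_{\text{st}}$.

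On $\partial\mathcal{P}_{\text{st}}$ I would use the Robin/anchoring condition \eqref{eq:stst_lc_6}, $K\partial_\nu\Q = W(\Q_{\text{pref}}-\Q)$, to control $\partial_\nu\phi$. Here $\boldsymbol\nu$ is the inward normal, so if $\phi$ has a maximum over $\overline\Omega$ at a boundary point $\x_0\in\partial\mathcal{P}_{\text{st}}$ with $\phi(\x_0)>\alpha^2$, then the outward (into $\Omega$) directional derivative of $\phi$ is $\le 0$, i.e. $\partial_\nu\phi(\x_0)\ge 0$ with $\boldsymbol\nu$ inward. Computing, $\partial_\nu\phi = 2\Q:\partial_\nu\Q = \tfrac{2W}{K}\Q:(\Q_{\text{pref}}-\Q) = \tfrac{2W}{K}\big(\Q:\Q_{\text{pref}} - \|\Q\|^2\big) \le \tfrac{2W}{K}\big(|\Q|\,|\Q_{\text{pref}}| - |\Q|^2\big) = \tfrac{2W}{K}|\Q|\big(|\Q_{\text{pref}}|-|\Q|\big)$. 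If $\phi(\x_0)>|\Q_{\text{pref}}|^2$ then $|\Q(\x_0)|>|\Q_{\text{pref}}|$, so this is strictly negative, contradicting $\partial_\nu\phi(\x_0)\ge0$ (one uses Hopf's lemma if the boundary maximum is not simultaneously an interior max, which we already excluded). Therefore $\phi\le\alpha^2$ everywhere, i.e. $\|\Q\|_{L^\infty}\le\alpha$.

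The main obstacle I anticipate is the regularity/rigor bookkeeping rather than any conceptual difficulty: the argument as sketched presumes enough smoothness of $\phi$ (and of $\u$, $\Q$) to speak of pointwise maxima, second derivatives, and Hopf's lemma, whereas the lemma is stated for a generic solution in $H^1(\Omega)\times H^2(\Omega)$. Making this airtight requires either (i) a bootstrap showing that any such solution is in fact smooth in the interior and up to $\partial\mathcal{P}_{\text{st}}$ (elliptic regularity for the $\Q$-equation with the bounded drift $\u$ and the Robin condition, iterated), or (ii) a Stampacchia-type truncation argument: test \eqref{eq:stst_lc_4} against $(\phi - \alpha^2)_+\,\Q$ (or an equivalent admissible test function respecting the periodicity and the boundary term), integrate by parts, use the two orthogonality identities and \eqref{def_of_q_star} on the superlevel set, and use \eqref{eq:stst_lc_6} to handle the boundary integral, concluding that $(\phi-\alpha^2)_+\equiv 0$. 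The truncation route is the more robust one at this regularity and is presumably what "the strategy from \cite{GuiRod2015}" refers to; I would present that version, with the pointwise maximum-principle heuristic above as motivation.
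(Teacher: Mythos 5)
Your proposal is correct and, in the version you say you would actually present (the truncation argument: contract \eqref{eq:stst_lc_4} with $\Q$, use \eqref{property_of_S}, \eqref{property_of_F_ext} and \eqref{def_of_q_star}, test the resulting scalar inequality with $(|\Q|^2-\alpha^2)_+$, and control the boundary term via the Robin condition \eqref{eq:stst_lc_6} together with $\u\cdot\boldsymbol{\nu}=0$ on $\partial\mathcal{P}_{\text{st}}$), it coincides with the paper's proof, including the same boundary estimate $\Q:\Q_{\text{pref}}-|\Q|^2\le 0$ on the superlevel set. The pointwise maximum-principle discussion is only motivation and is not needed; the paper goes directly by the truncation route you describe.
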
 
\begin{proof} 
	Recall the equation for $\Q$: 
	\begin{equation}
	(\u\cdot \nabla)\Q - S(\nabla \u,\Q)-K\Gamma \Delta \Q-\Gamma\hat{\H}(\Q)- F_{\text{ext}}(\Q, \Q_\infty)=0. 
	\end{equation}
By multiplying the above by $\Q$, taking the trace of the resulting expression
and using \eqref{property_of_S} and \eqref{property_of_F_ext}, we get
	\begin{equation*}
	\dfrac{1}{2}\u\cdot \nabla (|\Q|^2)-\dfrac{\Gamma K}{2}\left(\Delta (|\Q|^2) - 2 |\nabla \Q|^2\right)-\Gamma\hat{\H}(\Q):\Q=0. 
	\end{equation*}
	As $|\nabla \Q|^2$ is non-negative, we obtain the inequality:
	\begin{equation}\label{eq_for_Q_square}
	\u\cdot \nabla (|\Q|^2)-\Gamma K\Delta (|\Q|^2) -2\Gamma\hat{\H}(\Q):\Q\leq 0. 
	\end{equation}
	Now introduce $\psi(\Q):= (|\Q|^2- \alpha^2)_{+}$ ($\alpha$ is from \eqref{def_of_alpha}). Note that 
	\begin{equation*}
	\psi(\Q)\,\mathcal{D}(|\Q|^2)=\psi(\Q)\,\mathcal{D}(|\Q|^2-\alpha^2)=\psi(\Q)\,\mathcal{D}\psi(\Q),
	\end{equation*}
	where $\mathcal{D}$ is either $\Delta$ or $\nabla$.
	
	Next, we multiply \eqref{eq_for_Q_square} by $\psi(\Q)$ and integrate 
over $\Pi\setminus \mathcal{P}_{\text{st}}$. Then, we have
	\begin{eqnarray}\label{ineq_above}
	&&\dfrac{1}{2}\int\limits_{\partial \P} \u\cdot \nu \psi^2\,\text{d}s_x-\Gamma K \int\limits_{\partial \P}\dfrac{\partial\psi(\Q)}{\partial \nu}\psi(\Q)\,\text{d}s_x\nonumber
	\\
	&&\hspace{60pt}+\Gamma K\|\nabla \psi (\Q)\|^2_{L^2(\Omega)}-2\Gamma \int\limits_{\Omega}(\hat{\H}(\Q):\Q)\psi(\Q)\,\text{d}x\leq 0.  
	\end{eqnarray}
	The first term in the left hand side of the above inequality vanishes 
due to \eqref{eq:stst_lc_3} while the second term is negative:
	\begin{eqnarray*}
	K\int_{\partial \P} \dfrac{\partial\psi(\Q)}{\partial \nu}\psi(\Q)\,\text{d}s_x&=&2K\int_{\partial \P\cap\{|\Q|>\alpha\}} (\dfrac{\partial \Q}{\partial \nu}:\Q)(|\Q|^2-\alpha^2)\,\text{d}s_x\nonumber\\
	&=&2W\int_{\partial \P\cap\{|\Q|>\alpha\}} ((\Q_{\text{pref}}-\Q):\Q)(|\Q|^2-\alpha^2)\,\text{d}s_x\nonumber\\
	&=&2W\int_{\partial \P\cap\{|\Q|>\alpha\}} ((\Q_{\text{pref}}:\Q)-|\Q|^2)(|\Q|^2-\alpha^2)\,\text{d}s_x\nonumber\\
	&\leq&W\int_{\partial \P\cap\{|\Q|>\alpha\}} (|\Q_{\text{pref}}|^2-|\Q|^2)(|\Q|^2-\alpha^2)\,\text{d}s_x\nonumber \\
	&\leq & 0.
	\end{eqnarray*} 
	Hence, 
	\begin{equation*}
	K\|\nabla \psi (\Q)\|^2_{L^2(\Omega)}\leq 2 \int\limits_{\Omega}\left(\hat{\H}(\Q):\Q\right)\psi(\Q)\,\text{d}x. 
	\end{equation*}
	Next, by \eqref{def_of_q_star}, we have
	\begin{eqnarray*}
	K\|\nabla \psi (\Q)\|^2_{L^2(\Omega)}&\leq& 2 \int\limits_{\Omega}\left(\hat{\H}(\Q):\Q\right)\psi(\Q)\,\text{d}x\leq  0.
	\end{eqnarray*}
	so that $\|\nabla \psi (\Q)\|^2_{L^2(\Omega)}=0$. The Lemma is thus
proved.
\end{proof}

\subsection{Galerkin approximation for pair $(\u, \H)$}
\label{subsec:galerkin-intro}

We introduce here Galerkin approximations for the system 
\eqref{eq:stst_lc_1}-\eqref{eq:stst_lc_6}. For each $m\in \mathbb N$, 
we define: 
\begin{equation}
\u_m =\u_{\text{os}}+\hat{\u}_m=\u_{\text{os}}+\sum\limits_{k=1}^{m}u_{km}\Psi_{k}\text{ and }  \H_{m}=\sum\limits_{k=1}^{m}h_{km}\Phi_{k}.\label{expansions_H_idea}
\end{equation}
Note that the domain $\Pi$ is a bounded periodic box and both Laplacian and Stokes operators have a discrete spectrum implying existence of bases:
\begin{equation}
\text{$\left\{\Psi_{k}\left|\, \Psi_k|_{\partial \P_{\text{st}}}=0,\, \nabla \cdot \Psi_k=0,\,\Psi_k\,\text{is $\Pi$-periodic} \right.\right\}_{k=1}^{\infty}$ and $\left\{\Phi_{k}\left|\, \Phi_k\,\text{is $\Pi$-periodic} \right.\right\}_{k=1}^{\infty}$}
\end{equation}  
in $L^2_\sigma(\Omega;\mathbb R^{d})$ and $L^2(\Omega;\mathbb R^{d\times d})$, respectively. (Recall that $\Omega = \Pi \setminus \mathcal{P}_{\text{st}}$ and $L^2_\sigma$ means $L^2$-space with divergence-free condition.)

The function $\u_{\text{os}}$ above is an offset function used to 
to take care of non-zero boundary conditions for $\u$. It solves Stokes equation:
\begin{empheq}[left=\empheqlbrace]{align}
&\eta \Delta \u_{\text{os}}+\nabla p_{\text{os}} = 0, \text{ in }\Omega \\
& \nabla \cdot \u_{\text{os}}=0,  \text{ in }\Omega,\\ 
&\u_{\text{os}}=u_{\text{sq}}(\boldsymbol{\alpha}_{\text{st}},\x)\boldsymbol{\tau}  \text{ on }\partial \mathcal{P}_{\text{st}},\label{eq:os_bc}\\
&\u_{\text{os}}, p_{\text{os}} \, \text{periodic in} \, \Pi.
\end{empheq}
Anticipating that $\u=\u_{\text{os}} + \hat{\u}$, from \eqref{eq:os_bc} and \eqref{eq:stst_lc_3},
we have 
\begin{equation}
\hat{u}=0\text{ on }\partial\mathcal{P}_{\text{st}}.
\label{bc_for_hat_u} 
\end{equation}

To continue, for an appropriately large constant $M>0$,
we introduce a truncated potential $\hat{\mathcal{F}}_M\geq 0$ as follows:
\begin{equation}
\hat{\mathcal{F}}_{M}(\Q)=\left\{
\begin{array}{ll}
\hat{\mathcal{F}}(\Q), & \text{for $\|\Q\|\leq M$}\\
\left.\dfrac{\partial\hat{\mathcal{F}}}{\partial\|\Q\|}\right|_{\|\Q\|=M}\|\Q\|
+
\left(\left.\hat{\mathcal{F}}\right|_{\|\Q\|=M}-\left.\dfrac{\partial\hat{\mathcal{F}}}{\partial\|\Q\|}\right|_{\|\Q\|=M}M\right)
&\text{for }\|\Q\|>M
\end{array}
\right.
.
\end{equation}
The functional derivative of $\hat{\mathcal{F}}_{M}$ is given by
\begin{equation}
\hat{\H}_M(\Q)=-\nabla_{\Q}\hat{\mathcal{F}}_M=\left\{
\begin{array}{lr}
\hat{\H}(\Q),&\|\Q\|\leq M,\\
\gamma_M\dfrac{\Q}{\|\Q\|},&\|\Q\|>M,
\end{array}\right.
\quad
\text{where $\gamma_M = \left.\dfrac{\partial\hat{\mathcal{F}}}{\partial\|\Q\|}\right|_{\|\Q\|=M}$.}
\label{def_of_h_m}
\end{equation}
We have the following bound on $\hat{H}_M(\Q)$:
\begin{equation}
\|\hat{H}_M(\Q)\|_{L^{\infty}(\Omega)}\leq \Gamma_M, \quad \text{where }\Gamma_M=\max\{\|\hat{H}\|_{L^{\infty}(B_M(0))},\gamma_M\}.
\label{max_bound_on_H_M}
\end{equation}

We now define the function $\Q_m$, corresponding to the Galerkin approximation 
$\H_m$ as the solution to the following system:
\begin{equation}
\left\{
\begin{array}{l}
K\Delta \Q_m+\hat{\H}_M(\Q_m)=\H_m, \text{ in }\Omega\\
K\partial_\nu \Q_m= W (\Q_{\text{pref}}-\Q_m)\text{ on }\partial \mathcal{P}_{\text{st}},\\
\quad
\Q_m \, \text{periodic in} \, \Pi
\end{array}
\right.\label{aux_elliptic}
\end{equation}
Below, we will need {\it a priori} estimates for the solution to the problem \eqref{aux_elliptic}, formulated in the following lemma. Its proof is given in Appendix~\ref{sec:aux_estimate}.
\begin{lemma}
	\label{lemma_existence_of_q_m}
{\it Let $\H_m\in L^2(\Omega)$. Then there exists a solution $\Q_m$ for   \eqref{aux_elliptic}. Moreover, there exists a constant $C>0$ such that 
\begin{eqnarray}
&&\sqrt{K}\|\nabla \Q_m\|_{L^2(\Omega)}+\|\Q_m\|_{L^2(\Omega)}+\sqrt{W}\|\Q_m\|_{L^2(\partial\mathcal{P}_{\mathrm{st}})} \nonumber\\&& 
\hspace{150pt}\leq C\left(\|\H_m\|_{L^2(\Omega)}+\sqrt{W}\|\Q_{\mathrm{pref}}\|_{L^2(\partial\mathcal{P}_{\mathrm{st}})}+1\right) \label{lemma32_H1} \\
&&\|\Q\|_{H^2(\Omega)}\leq C \left(\gamma_1\|\H_m\|_{L^2(\Omega)}+\gamma_2\|\Q_{\mathrm{pref}}\|_{C^1}+\gamma_3\Gamma_M\right), \label{lemma32_H2}
\end{eqnarray}
where 
\begin{equation}
\gamma_1=\gamma_3=\dfrac{W+K}{W}\quad \text{and}\quad \gamma_2=\sqrt{\dfrac{W+K}{K}}.\nonumber
\end{equation}
}
\end{lemma}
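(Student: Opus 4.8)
The plan is to treat \eqref{aux_elliptic} as a semilinear elliptic boundary value problem with a Robin-type condition on $\partial\mathcal{P}_{\mathrm{st}}$ and periodic conditions on $\partial\Pi$, and to handle it in two stages: first existence plus the $H^1$ estimate \eqref{lemma32_H1} via a variational (direct-method) argument, then the $H^2$ estimate \eqref{lemma32_H2} via elliptic regularity applied to the linearized equation $K\Delta\Q_m = \H_m - \hat\H_M(\Q_m)$ once $\Q_m$ is known to lie in $L^2$ with the stated bound. For existence, I would minimize the functional
\begin{equation*}
\mathcal{J}(\Q) = \frac{K}{2}\int_\Omega |\nabla\Q|^2\,\mathrm{d}x + \int_\Omega \hat{\mathcal{F}}_M(\Q)\,\mathrm{d}x + \frac{W}{2}\int_{\partial\mathcal{P}_{\mathrm{st}}}|\Q_{\mathrm{pref}}-\Q|^2\,\mathrm{d}S_x + \int_\Omega \H_m:\Q\,\mathrm{d}x
\end{equation*}
over $H^1_{\mathrm{per}}(\Omega;\mathbb R^{d\times d})$; the Euler--Lagrange equation is exactly \eqref{aux_elliptic}. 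The key point making the direct method work is that $\hat{\mathcal{F}}_M$ is nonnegative and, by construction in \eqref{def_of_h_m}, grows only \emph{linearly} at infinity, so $\mathcal{J}$ is coercive once the linear term $\int_\Omega \H_m:\Q$ is absorbed — here one uses a trace-Poincar\'e inequality on $\Omega$ (valid because of the $W$-weighted boundary term, which controls the mean of $\Q$ on $\partial\mathcal{P}_{\mathrm{st}}$ and hence, combined with $\|\nabla\Q\|_{L^2}$, the full $H^1$ norm) together with Young's inequality to split $\int \H_m:\Q \le \frac{K}{4}\|\nabla\Q\|^2 + C\|\H_m\|_{L^2}^2 + \ldots$. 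Weak lower semicontinuity of each term (convexity of the quadratic pieces, and $\hat{\mathcal{F}}_M$ continuous and bounded below) then yields a minimizer, which is the desired $\Q_m$. Tracking constants through the coercivity/Young step, and noting the linear-growth contribution of $\nabla_\Q\hat{\mathcal{F}}_M$ contributes an $O(1)$ (really $O(\Gamma_M)$, but absorbed into the ``$+1$'') term, gives \eqref{lemma32_H1} with a constant depending only on $\Omega$ and the ratio $(W+K)/W$ implicit in the trace inequality.

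For \eqref{lemma32_H2}, I would view $\Q_m$ as the solution of the \emph{linear} problem $K\Delta\Q_m = f$ in $\Omega$, $K\partial_\nu\Q_m + W\Q_m = W\Q_{\mathrm{pref}}$ on $\partial\mathcal{P}_{\mathrm{st}}$, periodic on $\partial\Pi$, with right-hand side $f := \H_m - \hat\H_M(\Q_m) \in L^2(\Omega)$ (using the bound $\|\hat\H_M(\Q_m)\|_{L^\infty}\le\Gamma_M$ from \eqref{max_bound_on_H_M}, so $\|f\|_{L^2}\lesssim \|\H_m\|_{L^2}+\Gamma_M$). Standard $H^2$ elliptic regularity for the Robin problem on the smooth domain $\Omega$ gives
\begin{equation*}
\|\Q_m\|_{H^2(\Omega)} \le C\Big(\tfrac{1}{K}\|f\|_{L^2(\Omega)} + \|\Q_{\mathrm{pref}}\|_{H^{3/2}(\partial\mathcal{P}_{\mathrm{st}})} + \|\Q_m\|_{L^2(\Omega)}\Big),
\end{equation*}
and inserting the $L^2$ bound on $\Q_m$ from \eqref{lemma32_H1}, together with $\|\Q_{\mathrm{pref}}\|_{H^{3/2}}\le C\|\Q_{\mathrm{pref}}\|_{C^1}$ by trace/embedding, yields \eqref{lemma32_H2} after bookkeeping the $K$- and $W$-dependence of the Robin regularity constant into the factors $\gamma_1,\gamma_2,\gamma_3$. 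The scaling $(W+K)/W$ and $\sqrt{(W+K)/K}$ is exactly what one gets by nondimensionalizing the Robin condition: the natural norm combining $K\partial_\nu\Q$ and $W\Q$ degenerates as $W\to 0$ or $K\to\infty$, and these prefactors quantify that.

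The main obstacle, and the step deserving the most care, is obtaining the elliptic estimates with constants that are \emph{uniform in the geometry and explicit in $K$ and $W$} — in particular the trace-Poincar\'e inequality with the sharp $(W+K)/W$ dependence and the Robin-problem $H^2$ bound with the $\sqrt{(W+K)/K}$ boundary-data prefactor. One clean way to make this rigorous is to rescale: set $\tilde{\Q} = \Q_m$ but rewrite the boundary condition as $\partial_\nu\Q_m + (W/K)\Q_m = (W/K)\Q_{\mathrm{pref}}$, treat $\lambda := W/K$ as the Robin parameter, prove the estimates for general $\lambda>0$ with the $\lambda$-dependence made explicit (splitting into $\lambda\le 1$ and $\lambda\ge1$, or equivalently using the equivalent norm $\|\nabla\cdot\|_{L^2(\Omega)}^2 + \lambda\|\cdot\|_{L^2(\partial\mathcal{P}_{\mathrm{st}})}^2$), and then translate back. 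Since the detailed constant-chasing is routine once the structure is fixed, I would carry out this computation in Appendix~\ref{sec:aux_estimate} as stated, and in the main text only record the conclusions \eqref{lemma32_H1}--\eqref{lemma32_H2}.
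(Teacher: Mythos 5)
Your proposal follows essentially the same route as the paper: existence and the $H^1$ bound \eqref{lemma32_H1} come from minimizing exactly the functional you wrote over $H^1_{\mathrm{per}}(\Omega)$, with coercivity supplied by a trace-Poincar\'e inequality (which the paper proves by a compactness/contradiction argument) and the bound extracted by comparing energies, while \eqref{lemma32_H2} is obtained, as you suggest, by viewing $\Q_m$ as the solution of the linear Robin problem with right-hand side $K^{-1}(\H_m-\hat\H_M(\Q_m))$, bounding $\hat\H_M$ by $\Gamma_M$ via \eqref{max_bound_on_H_M}, and invoking an $H^2$ Robin-regularity estimate with the $W/K$-dependence made explicit (which the paper establishes in Appendix~\ref{sec:elliptic} by boundary flattening and difference quotients, exactly the constant-chasing you defer). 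The proposal is correct.
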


With the above, then the Galerkin approximations $(\u_m,\H_m)$ are defined so as to satisfy 
for each $k=1,...,m$, the following conditions: 
follows:
\begin{eqnarray}
&&\eta \int\limits_{\Omega}\nabla \hat{\u}_m:\nabla \Psi_k\,\text{d}x  +\eta \int\limits_{\Omega}\nabla \u_{\text{os}}:\nabla \Psi_k\,\text{d}x+ \rho\int\limits_{\Omega}(\u_{m}\cdot \nabla)\u_m\cdot \Psi_k\,\text{d}x\nonumber \\&& \hspace{120 pt}-\int\limits_{\Omega} \left(K\nabla \Q_m\astrosun \nabla \Q_m-\sigma_a(\Q_m,\H_m)\right): \nabla \Psi_{k}\,\text{d}x=0,\label{weak_galerkin_u}
\end{eqnarray}
\begin{eqnarray}
&&\Gamma\int\limits_{\Omega}\H_{m}:\Phi_k \,\text{d}x -
\int\limits_{\Omega}\left((\u_m\cdot \nabla)\Q_{m}-S(\nabla \u_m,\Q_m)\right):\Phi_k\,\text{d}x\nonumber\\
&&\hspace{220pt}+ \int_{\Omega} F_{\text{ext},M} : \Phi_{k} \,\text{d}x=0.\label{weak_galerkin_Q}
\end{eqnarray}
Here, $F_{\text{ext},M}$ is defined as a continuous function such that: 
\begin{equation}
F_{\text{ext},M}=\left\{
\begin{array}{rl}
F_{\text{ext}},& \|F_{\text{ext}}\|\leq M, \\ & \\
M\dfrac{F_{\text{ext}}}{\|F_{\text{ext}}\|},& \|F_{\text{ext}}\|>M. 
\end{array}
\right.\label{def_of_f_m}
\end{equation} 

Next, we will prove the existence and apriori estimates for $(\u_m, \H_m$).

\subsection{Energy estimate for Galerkin approximations}
\label{subsec:galerkin-energy-estimate}

\begin{lemma} \it
	Provided that $\eta$ and $\Gamma$ are large enough, there exists a constant $C$ independent of $m$ such that 
	\begin{equation}
	\|\nabla \hat{\u}_m\|_{L^2(\Omega)}^2+\|\H_m\|_{L^2(\Omega)}^2 < C. \label{lemma33_ineq}
	\end{equation}
	\label{lemma:apriori-estimate}
\end{lemma}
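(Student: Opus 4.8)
The plan is to test the Galerkin equations \eqref{weak_galerkin_u}--\eqref{weak_galerkin_Q} against the natural combination of the discrete solution itself, exploiting the cancellations that are built into the structure of the system, and then close the resulting inequality using the largeness of $\eta$ and $\Gamma$. Concretely, I would multiply \eqref{weak_galerkin_u} by $u_{km}$, sum over $k=1,\dots,m$ (which replaces the test function $\Psi_k$ by $\hat\u_m$), multiply \eqref{weak_galerkin_Q} by $h_{km}$ and sum (replacing $\Phi_k$ by $\H_m$), and add. The inertial term $\rho\int_\Omega(\u_m\cdot\nabla)\u_m\cdot\hat\u_m\,\mathrm dx$ is handled by writing $\u_m=\u_{\mathrm{os}}+\hat\u_m$; since $\nabla\cdot\u_m=0$ and $\hat\u_m=0$ on $\partial\mathcal P_{\mathrm{st}}$, the part with $\hat\u_m$ in the convected slot integrates to zero, leaving only $\rho\int_\Omega(\u_m\cdot\nabla)\u_{\mathrm{os}}\cdot\hat\u_m\,\mathrm dx$, which is controlled by $\|u_{\mathrm{sq}}\|_{L^\infty}$ times $\|\nabla\hat\u_m\|_{L^2}^2$ plus lower order, after elliptic estimates on $\u_{\mathrm{os}}$.

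The heart of the cancellation is the antisymmetric elastic stress together with the rotation/stretch term $S$ and the $\sigma_a$ coupling. Using the definition $\sigma_a(\Q_m,\H_m)=\Q_m\H_m-\H_m\Q_m$ and $\xi=0$, the term $\int_\Omega\sigma_a(\Q_m,\H_m):\nabla\hat\u_m\,\mathrm dx$ appearing (with sign) in the $\u$-equation should cancel against the corresponding $S(\nabla\u_m,\Q_m):\H_m$ contribution in the $\Q$-equation — this is the discrete analogue of the energy identity in Remark~\ref{remark:dissipation}, and it is exactly why the combination $(\hat\u_m,\H_m)$ is the right pairing. The term $K\int_\Omega(\nabla\Q_m\astrosun\nabla\Q_m):\nabla\hat\u_m\,\mathrm dx$ is symmetric and pairs with the advection term $\int_\Omega(\u_m\cdot\nabla)\Q_m:\H_m$ after one recalls $\H_m=K\Delta\Q_m+\hat\H_M(\Q_m)$ and integrates by parts; the property \eqref{property_of_S} and \eqref{property_of_F_ext} (in its truncated form, which I would need to check survives the truncation $F_{\mathrm{ext},M}$, or else absorb the defect) kill the remaining diagonal pieces. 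What is left on the good side is $\eta\|\nabla\hat\u_m\|_{L^2}^2+\Gamma\|\H_m\|_{L^2}^2$, and on the bad side: (i) the forcing from $\u_{\mathrm{os}}$, bounded by $C\rho\|u_{\mathrm{sq}}\|_{L^\infty}\|\nabla\hat\u_m\|_{L^2}^2$ plus a constant; (ii) terms coming from $S(\nabla\u_{\mathrm{os}},\Q_m):\H_m$ and the mismatch between $\u_m$ and $\hat\u_m$ in the advection term, bounded using Lemma~\ref{lemma_existence_of_q_m} (which gives $\|\Q_m\|_{H^2}$, hence $\|\nabla\Q_m\|_{L^4}$, $\|\Q_m\|_{L^\infty}$ via Sobolev) by $C(\tfrac{1}{\sqrt K}\|u_{\mathrm{sq}}\|_{L^\infty}+\|u_{\mathrm{sq}}\|_{W^{1,\infty}})\|\H_m\|_{L^2}^2$ plus lower order; (iii) the external-field term $\int_\Omega F_{\mathrm{ext},M}:\H_m\,\mathrm dx$, bounded via the $L^\infty$ bound on $\Q_m$ (or on $F_{\mathrm{ext},M}$ directly) and Young's inequality. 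Collecting everything,
\begin{equation}
\eta\|\nabla\hat\u_m\|_{L^2}^2+\Gamma\|\H_m\|_{L^2}^2\le C\rho\|u_{\mathrm{sq}}\|_{L^\infty}\|\nabla\hat\u_m\|_{L^2}^2+C\Big(\tfrac{1}{\sqrt K}\|u_{\mathrm{sq}}\|_{L^\infty}+\|u_{\mathrm{sq}}\|_{W^{1,\infty}}\Big)\|\H_m\|_{L^2}^2+C_0,
\end{equation}
and hypothesis \eqref{large_enough} lets us absorb the first two terms on the right into the left, yielding \eqref{lemma33_ineq} with $C=2C_0/\min\{\eta-2C\rho\|u_{\mathrm{sq}}\|_{L^\infty},\ \Gamma-2C(\cdots)\}$, independent of $m$.

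The main obstacle, and the step I would spend the most care on, is bookkeeping the $\Q_m$-dependent nonlinear terms so that every factor of $\Q_m$ is controlled by the right power of the small/large parameters: one cannot afford $\|\Q_m\|_{H^2}$ to enter multiplied by $\|\H_m\|_{L^2}$ without a compensating smallness, so the estimates of Lemma~\ref{lemma_existence_of_q_m} — in particular the explicit dependence through $\gamma_1,\gamma_2,\gamma_3$ and $\Gamma_M$ — must be fed in at exactly the right places, and any term quadratic in $\H_m$ with an uncontrolled $\Q_m$ coefficient must instead be routed through the exact cancellation rather than estimated crudely. A secondary subtlety is that the truncation replaces $\hat\H$ by $\hat\H_M$ and $F_{\mathrm{ext}}$ by $F_{\mathrm{ext},M}$, so I would verify that the identities \eqref{property_of_S} and \eqref{property_of_F_ext} still hold (or hold up to a controllable error) for the truncated quantities; since the truncations act radially in $\|\Q\|$, the contraction $F_{\mathrm{ext},M}:\Q$ remains zero and $\hat\H_M(\Q):\Q$ retains the sign property used in \eqref{def_of_q_star}, so this causes no real trouble. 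Once \eqref{lemma33_ineq} is in hand, Lemma~\ref{lemma_existence_of_q_m} upgrades it to a uniform $H^2$ bound on $\Q_m$, which is what the subsequent compactness argument in Subsections~\ref{subsec:galerkin-existence}--\ref{subsec:galerkin-limit} will use.
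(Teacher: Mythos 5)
Your proposal follows essentially the same route as the paper: test \eqref{weak_galerkin_u}--\eqref{weak_galerkin_Q} with $(\hat\u_m,\H_m)$, split $\u_m=\u_{\mathrm{os}}+\hat\u_m$, use the algebraic identity $\sigma_a(\Q_m,\H_m):\nabla\hat\u_m+S(\nabla\hat\u_m,\Q_m):\H_m=0$, pair the Ericksen stress with the advection term via $\H_m=K\Delta\Q_m+\hat\H_M(\Q_m)$, control the remaining $\u_{\mathrm{os}}$-terms through Lemma~\ref{lemma_existence_of_q_m}, and absorb using \eqref{large_enough}, exactly as in the paper. The only cosmetic slip is attributing the disappearance of the residual advection piece to \eqref{property_of_S}--\eqref{property_of_F_ext}; in the paper it follows instead from the potential structure $\hat\H_M=-\nabla_\Q\hat{\mathcal F}_M$ together with $\hat\u_m\cdot\boldsymbol{\nu}=0$ on $\partial\mathcal P_{\mathrm{st}}$, which is what your integration by parts actually uses.
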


\begin{proof} 
	Using test function $\hat{\u}_m$ and $\H_m$ instead of $\Psi_{k}$ and $\Phi_{k}$ in \eqref{weak_galerkin_u}-\eqref{weak_galerkin_Q} and taking the sum of two equalities, we obtain the following energy equality:
	\begin{eqnarray}
	&&\eta \int\limits_{\Omega}|\nabla \hat{\u}_m|^2\,\text{d}x+\Gamma\int\limits_{\Omega}|\H_m|^2\,\text{d}x\nonumber\\
	&& \hspace{40 pt}= - \rho\int\limits_{\Omega}(\u_{m}\cdot \nabla)\u_m\cdot \hat{\u}_m\,\text{d}x-\eta \int\limits_{\Omega}\nabla \u_{\text{os}}:\nabla \hat{\u}_m\,\text{d}x\nonumber\\
	&& \hspace{50 pt}-\int\limits_{\Omega}\left(\sigma_a(\Q_m,\H_m): \nabla \hat{\u}_m+S(\nabla \hat{\u}_m,\Q_m):\H_m\right)\,\text{d}x\nonumber
	\\
	&& \hspace{50 pt}+K\int\limits_{\Omega}\left(\nabla \Q_m\astrosun \nabla \Q_m:\nabla \hat{\u}_m +  (\hat{\u}_m\cdot \nabla)\Q_{m}:\Delta {\Q}_m\right) \,\text{d}x\nonumber\\
	&& \hspace{50 pt} +K\int\limits_{\Omega} (\u_{\text{os}}\cdot \nabla)\Q_{m}:\Delta {\Q}_m \,\text{d}x  +\int\limits_{\Omega}(\u_m\cdot \nabla)\Q_m:\hat{\H}_M(\Q_m)\,\text{d}x\nonumber\\
	&& \hspace{50 pt} - \int\limits_{\Omega} S(\nabla \u_\text{os},\Q_m):\H_m \text{d}x - \int\limits_{\Omega} F_{\text{ext},M}:\H_m \,\text{d}x.\label{raw_energy_est}
	\end{eqnarray}
	Next, we estimate each integral. Below, $C$ denotes a generic constant independent of $K,W,\Q_{\text{pref}},\eta,\rho,\u_{\text{os}},\Gamma,m$ which may change from line to line whereas $C^{*}$ is a generic constant which is independent of $m$ only and may also change from line to line.   
	
	\medskip 
	
	\noindent{\bf 1.} We use the representation $\u_m=\u_{\mathrm{os}} + \hat{\u}_m$ 
to write
	\begin{eqnarray}
	&&-\rho\int\limits_{\Omega}(\u_{m}\cdot \nabla)\u_m\cdot \hat{\u}_m\,\text{d}x=-\rho\int\limits_{\Omega}({\u}_m\cdot \nabla)\hat{\u}_m\cdot \hat{\u}_m\,\text{d}x-\rho\int\limits_{\Omega}({\u}_m\cdot \nabla){\u}_{\text{os}}\cdot \hat{\u}_m\,\text{d}x.\quad\,\,\label{b1_num_1}
	\end{eqnarray}
	Next, using integration by parts, the non-penetration condition $\hat{\u}_m\cdot \boldsymbol{\nu}=0$ on $\partial \mathcal{P}_{\mathrm{st}}$ and the divergence-free condition 
$\nabla \cdot \u_m=0$, we get:
	\begin{equation}
	\int\limits_{\Omega}({\u}_m\cdot \nabla)\hat{\u}_m\cdot \hat{\u}_m\,\text{d}x=\dfrac{1}{2}\int\limits_{\Omega}{\u}_m\cdot \nabla |\hat{\u}_m|^2\,\text{d}x = 0.
	\end{equation}
	To estimate the second integral in the right-hand side of \eqref{b1_num_1} we integrate by parts and use the non-penetration condition again on 
$\partial \mathcal{P}_{\mathrm{st}}$ to we get:	
	\begin{eqnarray}
	-\rho\int\limits_{\Omega} (\u_m\cdot \nabla)\u_{\text{os}}\cdot \hat{\u}_m\,\text{d}x&=&\rho\int\limits_{\Omega}\u_m \cdot \nabla \hat{\u}_m\u_{\text{os}}\,\text{d}x.
	\end{eqnarray}
	Finally, we use the Poincar\'e estimate for $\hat{\u}_m$ (one can also use \eqref{poincare_without_coefs} with $\hat{\u}_m$ instead of $\Q$) as well as that the offset function $\u_{\text{os}}$ is a smooth function with bounded derivatives: 
	\begin{eqnarray}
	&&-\rho\int\limits_{\Omega}(\u_{m}\cdot \nabla)\u_m\cdot \hat{\u}_m\,\text{d}x=\rho\int\limits_{\Omega}\hat{\u}_m \cdot \nabla \hat{\u}_m\u_{\text{os}}\,\text{d}x+\rho\int\limits_{\Omega}\u_{\text{os}} \cdot \nabla \hat{\u}_m\u_{\text{os}}\,\text{d}x\nonumber\\
	&&\hspace{40pt}\leq \rho \int\limits_{\Omega}|\nabla \hat{\u}_{m}|(|\hat{\u}_m||\u_{\text{os}}|+|\u_{\text{os}}|^2)\,\text{d}x\nonumber \\
	&&\hspace{40pt}\leq C\rho\|{\u}_{\text{os}}\|_{L^\infty(\Omega)} \left(\int\limits_{\Omega}|\nabla \hat{\u}_m|^2\,\text{d}x\right)+C^{*}. \label{a_priori_bp1} 
	\end{eqnarray}

\smallskip 

	\noindent {\bf 2.} Here we bound the second integral in the right-hand side of \eqref{raw_energy_est} by the Cauchy-Schwarz inequality:
	\begin{eqnarray}
	-\eta \int\limits_{\Omega}\nabla \u_{\text{os}}:\nabla \hat{\u}_m\,\text{d}x\leq \dfrac{\eta}{4}\int\limits_{\Omega}|\nabla \hat{\u}_m|^2\,\text{d}x+C^{*}. \label{a_priori_bp2}
	\end{eqnarray}

	\noindent{\bf 3.} We have the equality $
	\sigma_a (A,B):D+S(D,A):B=0$ which holds for all matrices $A$, $B$, and $D$ such that $A$ and $B$ are symmetric: 
	\begin{equation}
	\int\limits_{\Omega}\left(\sigma_a(\Q_m,\H_m): \nabla \hat{\u}_m+S(\nabla \hat{\u}_m,\Q_m):\H_m\right)\,\text{d}x=0. \label{a_priori_bp3}
	\end{equation}

	\smallskip 
	
	\noindent{\bf 4.} Note that the integral in the 4th line of \eqref{raw_energy_est} vanishes. Indeed, using integration by parts, $\nabla \cdot \hat{\u}_m=0$ and \eqref{bc_for_hat_u}, we get 
	\begin{eqnarray}
	&&\int\limits_{\Omega}\nabla \Q_m\astrosun \nabla \Q_m:\nabla \hat{\u}_m \,\text{d}x\nonumber \nonumber\\&&\hspace{20 pt} = -\dfrac{1}{2}\int\limits_{\Omega}(\hat{\u}_m\cdot \nabla)|\nabla \Q_m|^2\,\text{d}x-\int\limits_{\Omega}(\hat{\u}_m\cdot \nabla)\Q_m:\Delta \Q_m \,\text{d}x +\int\limits_{\partial \mathcal{P}_{\mathrm{st}} }\left[\left(\nabla \Q_m\astrosun \nabla \Q_m\right) \nu\right]\cdot \hat{\u}_m\,\text{d}S_x\nonumber \\
	&&\hspace{30 pt}=-\dfrac{1}{2}\int\limits_{\partial\mathcal{P}_{\text{st}}}(\hat{\u}_m\cdot \nu)|\nabla \Q_m|^2\,\text{d}S_x+\dfrac{1}{2}\int\limits_{\Omega}(\nabla\cdot \hat{\u}_m)|\nabla\Q_m|^2\,\text{d}x-\int\limits_{\Omega}(\hat{\u}_m\cdot \nabla)\Q_m:\Delta \Q_m \,\text{d}x\nonumber\\
	&&\hspace{30pt}=-\int\limits_{\Omega}(\hat{\u}_m\cdot \nabla)\Q_m:\Delta \Q_m \,\text{d}x.\label{a_priori_bp4}
	\end{eqnarray}
	
	\smallskip 
	
	\noindent{\bf 5.} We use the Cauchy-Schwarz inequality, \eqref{bc_for_hat_u}, and the {\it a priori} bound \eqref{lemma32_H1} to estimate the 5th line of \eqref{raw_energy_est}:
	\begin{eqnarray}
	&&K\int\limits_{\Omega} (\u_{\text{os}}\cdot \nabla)\Q_{m}:\Delta {\Q}_m \,\text{d}x  +\int\limits_{\Omega}(\u_m\cdot \nabla)\Q_m:\hat{\H}_M(\Q_m)\,\text{d}x\nonumber\\
	&&\hspace{50pt}=\int\limits_{\Omega} (\u_{\text{os}}\cdot \nabla)\Q_{m}:\H_m \,\text{d}x+\int\limits_{\Omega}(\hat{\u}_m\cdot \nabla)\hat{\mathcal{F}}_M(\Q_m)\,\text{d}x\nonumber\\
	&&\hspace{55 pt}\leq C\|\u_{\text{os}}\|_{L^{\infty}(\Omega)}(\sqrt{K}\|\nabla \Q_m\|_{L^2(\Omega)}^2+\dfrac{1}{\sqrt{K}}\|\H_m\|^2_{L^2(\Omega)})+\int\limits_{\partial \P_{\text{st}}}(\hat{\u}_m\cdot\nu)\hat{\mathcal{F}}_M(\Q_m)\,\text{d}S_x \nonumber \\
	&& \hspace{55 pt} 
	\leq C\|\u_{\text{os}}\|_{L^{\infty}(\Omega)}(\sqrt{K}\|\nabla \Q_m\|_{L^2(\Omega)}^2+\dfrac{1}{\sqrt{K}}\|\H_m\|^2_{L^2(\Omega)})\nonumber \\
	&& \hspace{55 pt}\leq \dfrac{C}{\sqrt{K}}\|\u_{\text{os}}\|_{L^{\infty}(\Omega)}\|\H_m\|_{L^2(\Omega)}^2+C^{*}.\label{a_priori_bp5}
	\end{eqnarray}

\smallskip 
	
	\noindent{\bf 6.} We use again the {\it a priori} bound \eqref{lemma32_H1} and the Cauchy-Schwarz inequality to estimate the first term in the 6th line of \eqref{raw_energy_est}:
	\begin{eqnarray}
	\int_{\Omega} S(\nabla \u_\text{os},\Q_m):\H_m \text{d}x &<& C\|\u_{\text{os}}\|_{W^{1,\infty}(\Omega)}(\| \Q_m\|_{L^2(\Omega)}^2 + \|\H_m\|_{L^{2}(\Omega)}^2)
	\nonumber \\
	&<& C \|\u_{\text{os}}\|_{W^{1,\infty}(\Omega)} \|\H_m\|_{L^2(\Omega)}^2 + C^{*}.\label{a_priori_bp6}
	\end{eqnarray}
	
	\smallskip

	\noindent{\bf 7.} Finally, the last term in \eqref{raw_energy_est} is estimated as follows 
	\begin{eqnarray}
	\int_{\Omega} F_{\text{ext},M}:\H_{m} \text{d}x < \dfrac{\Gamma}{4}\|\H_m\|_{L^{2}(\Omega)}^2+C^{*}.\label{a_priori_bp7}
	\end{eqnarray}

	\medskip 
	Collect \eqref{a_priori_bp1}-\eqref{a_priori_bp7} and substitute them in \eqref{raw_energy_est}:
	\begin{eqnarray*}
	&&\left(\dfrac{3\eta}{4}-C\rho\|\u_{\text{os}}\|_{L^{\infty}(\Omega)}\right)\|\hat{\u}_{m}\|_{L^2(\Omega)}^2\\
	&&\hspace{60pt}+\left(\dfrac{3\Gamma}{4}-\dfrac{C}{\sqrt{K}}\|\u_{\text{os}}\|_{L^{\infty}(\Omega)}-C\|\u_{\text{os}}\|_{W^{1,\infty}(\Omega)}\right)\|\H_m\|_{L^2(\Omega)}^2\leq C^{*}.
	\end{eqnarray*}
	Under the restrictions \eqref{large_enough}, the inequality \eqref{lemma33_ineq} 
holds proving the Lemma.
\end{proof}

\subsection{Existence of Galerkin approximations}
\label{subsec:galerkin-existence}

We will use the following result \cite[Lemma IX.3.1, p. 597]{Galdi2011}: 
\begin{thm}\it 
	Let $\P:\mathbb R^{p}\to\mathbb R^{p}$ be a continuous mapping such that for some $R>0$:
	\begin{equation}
	\P(\boldsymbol{\xi})\cdot \boldsymbol{\xi} \geq 0 \quad \text{ for all }\boldsymbol{\xi} \in \mathbb R^p \text{ with }|\boldsymbol{\xi}|=R.\label{hedgehog}
	\end{equation} 	
	Then there exists $\boldsymbol{\xi}_0\in\mathbb R^p$ with $|\boldsymbol{\xi}_0|\leq R$ such that $\P(\boldsymbol{\xi}_0)=0$.
\end{thm}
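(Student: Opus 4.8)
The plan is to deduce this statement from Brouwer's fixed point theorem by a contradiction argument. First I would suppose, for contradiction, that $\P(\boldsymbol{\xi})\neq\boldsymbol{0}$ for every $\boldsymbol{\xi}$ in the closed ball $\overline{B_R}=\{\boldsymbol{\xi}\in\mathbb{R}^{p}:|\boldsymbol{\xi}|\le R\}$. Then the map
\[
g(\boldsymbol{\xi}):=-\,R\,\frac{\P(\boldsymbol{\xi})}{|\P(\boldsymbol{\xi})|}
\]
is well defined and continuous on $\overline{B_R}$, and it sends $\overline{B_R}$ into the sphere $\partial B_R\subset\overline{B_R}$. Since $\overline{B_R}$ is convex and compact, Brouwer's theorem yields a fixed point $\boldsymbol{\xi}_0=g(\boldsymbol{\xi}_0)$; in particular $|\boldsymbol{\xi}_0|=R$, so $\boldsymbol{\xi}_0$ lies on the sphere where hypothesis \eqref{hedgehog} is available.

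The second step is to extract the contradiction, which is a one-line computation. From $\boldsymbol{\xi}_0=-R\,\P(\boldsymbol{\xi}_0)/|\P(\boldsymbol{\xi}_0)|$ we obtain $\P(\boldsymbol{\xi}_0)=-\,|\P(\boldsymbol{\xi}_0)|\,\boldsymbol{\xi}_0/R$, hence
\[
\P(\boldsymbol{\xi}_0)\cdot\boldsymbol{\xi}_0=-\,\frac{|\P(\boldsymbol{\xi}_0)|}{R}\,|\boldsymbol{\xi}_0|^{2}=-\,R\,|\P(\boldsymbol{\xi}_0)|<0,
\]
where the strict inequality uses $\P(\boldsymbol{\xi}_0)\neq\boldsymbol{0}$. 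This contradicts \eqref{hedgehog}, so $\P$ must vanish at some point of $\overline{B_R}$, proving the claim.

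An equivalent route that avoids writing the retraction explicitly is a degree-theoretic homotopy argument: if $\P$ already vanishes on $\partial B_R$ we are done, and otherwise the homotopy $H(t,\boldsymbol{\xi})=t\,\P(\boldsymbol{\xi})+(1-t)\boldsymbol{\xi}$ satisfies $H(t,\boldsymbol{\xi})\cdot\boldsymbol{\xi}=t\,\P(\boldsymbol{\xi})\cdot\boldsymbol{\xi}+(1-t)R^{2}\ge(1-t)R^{2}>0$ on $\partial B_R$ for $t\in[0,1)$ and $H(1,\cdot)=\P\neq\boldsymbol{0}$ there, so $H$ never vanishes on $\partial B_R$; homotopy invariance of the Brouwer degree then gives $\deg(\P,B_R,\boldsymbol{0})=\deg(\mathrm{id},B_R,\boldsymbol{0})=1\neq0$, forcing a zero of $\P$ inside $B_R$. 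Either way, the only substantive ingredient is the topological input (Brouwer's theorem, equivalently the nonexistence of a continuous retraction of the ball onto its boundary, equivalently nontriviality of the degree of the identity), so I do not expect a genuine obstacle here; the statement is a classical repackaging of Brouwer's theorem tailored to the sign condition \eqref{hedgehog}.
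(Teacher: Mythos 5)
Your argument is correct: the contradiction via the retraction-type map $g(\boldsymbol{\xi})=-R\,\P(\boldsymbol{\xi})/|\P(\boldsymbol{\xi})|$ and Brouwer's fixed point theorem is exactly the standard proof of this lemma, and your degree-theoretic variant is also sound. Note that the paper itself does not prove this statement but quotes it from Galdi (Lemma IX.3.1), where the proof is the same Brouwer fixed-point argument you give, so there is nothing to add.
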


Next, we introduce the mapping $\P$ for our problem. 
Given $m\geq 1$, let $$\boldsymbol{\xi}=(u_{1m},...,u_{mm},h_{1m},...,h_{mm})\in\mathbb R^{2m}$$ and the $k$th component mapping $\P:\mathbb R^{2m}\to \mathbb R^{2m}$ $(p=2m)$ is the left-hand side of \eqref{weak_galerkin_u} for $1\leq k\leq m$ and the left-hand side of \eqref{weak_galerkin_Q} for $m+1\leq k\leq 2m$. We obtain that 
\begin{equation*}
\P(\boldsymbol{\xi})\cdot \boldsymbol{\xi}=\eta\|\nabla \hat{\u}_m\|_{L^2(\Omega)}^2+\Gamma\|\H_m\|_{L^2(\Omega)}^2-\mathcal{R}(\u_m,\H_m),
\end{equation*}
where $\mathcal{R}(\u_m,\H_m)$ is the right-hand side of \eqref{raw_energy_est}. In the proof of Lemma~\ref{lemma:apriori-estimate} we showed that 
\begin{eqnarray*}
&&|\mathcal{R}(\u_m,\H_m)|\leq \left(\dfrac{\eta}{4}+C\rho\|\u_{\text{os}}\|_{L^{\infty}(\Omega)}\right)\|\hat{\u}_{m}\|_{L^2(\Omega)}^2\\
&&\hspace{80pt}+\left(\dfrac{\Gamma}{4}+\dfrac{C}{\sqrt{K}}\|\u_{\text{os}}\|_{L^{\infty}(\Omega)}+C\|\u_{\text{os}}\|_{W^{1,\infty}(\Omega)}\right)\|\H_m\|_{L^2(\Omega)}^2+ C^{*}.
\end{eqnarray*}
Therefore, using this inequality we obtain the following:
\begin{lemma}\it
	Assume $\eta$ and $\Gamma$ are large enough so Lemma \ref{lemma:apriori-estimate} holds. Then there exists constants $C_1,C_2>0$ independent of $m$ such that 
	\begin{equation*}
	\P(\boldsymbol{\xi})\cdot \boldsymbol{\xi}\geq C_1 \left(\|\nabla \hat{\u}_m\|_{L^2(\Omega)}^2+\|\H_m\|_{L^2(\Omega)}^2\right)-C_2. 
	\end{equation*}
\end{lemma}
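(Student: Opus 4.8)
The plan is to unwind the definition of the map $\P$ and reduce the asserted coercivity inequality to the energy identity and the term-by-term bounds already established in the proof of Lemma~\ref{lemma:apriori-estimate}. Recall that, for $\boldsymbol{\xi}=(u_{1m},\dots,u_{mm},h_{1m},\dots,h_{mm})\in\mathbb R^{2m}$, one sets $\hat{\u}_m=\sum_{k=1}^{m}u_{km}\Psi_k$, $\u_m=\u_{\text{os}}+\hat{\u}_m$, $\H_m=\sum_{k=1}^{m}h_{km}\Phi_k$, and takes $\Q_m$ to be the solution of the auxiliary elliptic problem \eqref{aux_elliptic} associated with this $\H_m$ (which exists and obeys \eqref{lemma32_H1}--\eqref{lemma32_H2} by Lemma~\ref{lemma_existence_of_q_m}); the $k$th component of $\P(\boldsymbol{\xi})$ is the left-hand side of \eqref{weak_galerkin_u} tested against $\Psi_k$ for $1\le k\le m$, and the left-hand side of \eqref{weak_galerkin_Q} tested against $\Phi_k$ for $m+1\le k\le 2m$. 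Since both weak forms are linear in the test function, forming the Euclidean product of $\P(\boldsymbol{\xi})$ with $\boldsymbol{\xi}$ amounts to replacing $\Psi_k$ by $\hat{\u}_m$ in \eqref{weak_galerkin_u}, replacing $\Phi_k$ by $\H_m$ in \eqref{weak_galerkin_Q}, and adding the two resulting scalar equations.

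First I would carry out the same algebraic rearrangements used in the proof of Lemma~\ref{lemma:apriori-estimate}: split $\u_m=\u_{\text{os}}+\hat{\u}_m$ (also inside $S(\nabla\u_m,\Q_m)$, using linearity of $S$ in its first slot) and use the first line of \eqref{aux_elliptic} to replace $\H_m$ by $K\Delta\Q_m+\hat{\H}_M(\Q_m)$ inside $\int_{\Omega}(\u_m\cdot\nabla)\Q_m:\H_m\,\text{d}x$. Each step involved — integration by parts, $\nabla\cdot\hat{\u}_m=0$, $\hat{\u}_m=0$ on $\partial\mathcal{P}_{\text{st}}$ from \eqref{bc_for_hat_u}, and the cancellation $\sigma_a(A,B):D+S(D,A):B=0$ for symmetric $A,B$ recorded in \eqref{a_priori_bp3} — is an \emph{identity} valid for the explicitly constructed triple $(\hat{\u}_m,\H_m,\Q_m)$ attached to an \emph{arbitrary} $\boldsymbol{\xi}$, not merely for a Galerkin solution. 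The outcome is precisely
\[
\P(\boldsymbol{\xi})\cdot\boldsymbol{\xi}=\eta\|\nabla\hat{\u}_m\|_{L^2(\Omega)}^2+\Gamma\|\H_m\|_{L^2(\Omega)}^2-\mathcal{R}(\u_m,\H_m),
\]
where $\mathcal{R}$ denotes the right-hand side of \eqref{raw_energy_est}.

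Next I would invoke the bounds \eqref{a_priori_bp1}--\eqref{a_priori_bp7}, each of which was obtained purely through Cauchy--Schwarz, the Poincar\'e inequality for $\hat{\u}_m$, the \emph{a priori} estimate \eqref{lemma32_H1} on $\Q_m$, and the uniform bound \eqref{max_bound_on_H_M} on $\hat{\H}_M$ — again never using that $(\u_m,\H_m)$ solves anything — and hence still hold here. Summing them gives
\[
|\mathcal{R}(\u_m,\H_m)|\le\Bigl(\tfrac{\eta}{4}+C\rho\|\u_{\text{os}}\|_{L^\infty(\Omega)}\Bigr)\|\nabla\hat{\u}_m\|_{L^2(\Omega)}^2
+\Bigl(\tfrac{\Gamma}{4}+\tfrac{C}{\sqrt{K}}\|\u_{\text{os}}\|_{L^\infty(\Omega)}+C\|\u_{\text{os}}\|_{W^{1,\infty}(\Omega)}\Bigr)\|\H_m\|_{L^2(\Omega)}^2+C^{*},
\]
with $C^{*}$ independent of $m$. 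Combining this with the previous identity yields
\[
\P(\boldsymbol{\xi})\cdot\boldsymbol{\xi}\ge\Bigl(\tfrac{3\eta}{4}-C\rho\|\u_{\text{os}}\|_{L^\infty(\Omega)}\Bigr)\|\nabla\hat{\u}_m\|_{L^2(\Omega)}^2
+\Bigl(\tfrac{3\Gamma}{4}-\tfrac{C}{\sqrt{K}}\|\u_{\text{os}}\|_{L^\infty(\Omega)}-C\|\u_{\text{os}}\|_{W^{1,\infty}(\Omega)}\Bigr)\|\H_m\|_{L^2(\Omega)}^2-C^{*},
\]
and, exactly as at the end of the proof of Lemma~\ref{lemma:apriori-estimate}, the largeness hypothesis on $\eta$ and $\Gamma$ (that is, \eqref{large_enough}, after bounding $\|\u_{\text{os}}\|_{L^\infty(\Omega)}$, $\|\u_{\text{os}}\|_{W^{1,\infty}(\Omega)}$ by $\|u_{\text{sq}}\|_{W^{1,\infty}(\Omega)}$ through the offset Stokes problem) forces both parenthetical coefficients to be at least $\eta/4$ and $\Gamma/4$, respectively. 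Taking $C_1:=\tfrac14\min\{\eta,\Gamma\}$ and $C_2:=C^{*}$ then proves the claim, with $C_1,C_2$ independent of $m$.

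I do not expect a genuine obstacle, since all the analytic content is already inside Lemma~\ref{lemma:apriori-estimate}; the one point that deserves care is the bookkeeping remark that the passage to \eqref{raw_energy_est} and the bound on $\mathcal{R}$ are, respectively, an identity and an inequality for the triple $(\hat{\u}_m,\H_m,\Q_m)$ built from a general $\boldsymbol{\xi}$, so no circular appeal to the (still unproved) solvability of the Galerkin system is made. A minor cosmetic point is to use the Poincar\'e inequality once to pass between $\|\hat{\u}_m\|_{L^2(\Omega)}$ and $\|\nabla\hat{\u}_m\|_{L^2(\Omega)}$ wherever the earlier estimates were stated in terms of the full $L^2$ norm.
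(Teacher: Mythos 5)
Your proposal is correct and follows essentially the same route as the paper: identify $\P(\boldsymbol{\xi})\cdot\boldsymbol{\xi}$ with $\eta\|\nabla\hat{\u}_m\|_{L^2(\Omega)}^2+\Gamma\|\H_m\|_{L^2(\Omega)}^2-\mathcal{R}(\u_m,\H_m)$ and reuse the bound on $|\mathcal{R}|$ established in the proof of Lemma~\ref{lemma:apriori-estimate}, with the largeness condition \eqref{large_enough} absorbing the lower-order coefficients. Your added remark that the identity and the estimates hold for the triple built from an arbitrary $\boldsymbol{\xi}$, so no circular appeal to solvability is made, is exactly the right (and in the paper only implicit) justification.
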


The condition \eqref{hedgehog} is satisfied for large $R>0$ and thus we have the following
existence result for our Galerkin approximations:

\begin{thm}\it
	Assume $\eta$ and $\Gamma$ are the same as in Lemma \ref{lemma:apriori-estimate}. 
Then there exists a solution $(\hat{\u}_m,\H_m)$ of \eqref{weak_galerkin_u}-\eqref{weak_galerkin_Q}. Moreover, if $\Q_m$ is defined via \eqref{aux_elliptic}, then the solution satisfies
	\begin{equation}
	\|\hat{\u}_m\|_{H^1(\Omega)}^2+\|\Q_{m}\|_{H^2(\Omega)}^2+\hat{\mathcal{F}}_M(\Q_m)+\|\Q_{\mathrm{pref}}-\Q_m\|^2_{L^2(\partial \P_{\text{st}})}+\|\H_m\|_{L^2(\Omega)}^2 < C.\label{a-priori-bound-2}
	\end{equation}
\end{thm}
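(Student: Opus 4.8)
The plan is to obtain $(\hat{\u}_m,\H_m)$ as a zero of the finite-dimensional map $\P:\mathbb R^{2m}\to\mathbb R^{2m}$ already introduced, via the Brouwer-type fixed point result quoted above. With $\boldsymbol{\xi}=(u_{1m},\dots,u_{mm},h_{1m},\dots,h_{mm})$, the map $\P$ assembles the left-hand side of \eqref{weak_galerkin_u} (first $m$ components) and of \eqref{weak_galerkin_Q} (last $m$ components), where $\u_m=\u_{\mathrm{os}}+\sum u_{km}\Psi_k$, $\H_m=\sum h_{km}\Phi_k$, and $\Q_m$ is recovered from $\H_m$ through the auxiliary elliptic system \eqref{aux_elliptic}. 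The coercivity condition \eqref{hedgehog} is exactly the inequality $\P(\boldsymbol{\xi})\cdot\boldsymbol{\xi}\ge C_1(\|\nabla\hat{\u}_m\|_{L^2(\Omega)}^2+\|\H_m\|_{L^2(\Omega)}^2)-C_2$ established just before the statement (using Lemma~\ref{lemma:apriori-estimate} and the decomposition $\P(\boldsymbol{\xi})\cdot\boldsymbol{\xi}=\eta\|\nabla\hat{\u}_m\|_{L^2(\Omega)}^2+\Gamma\|\H_m\|_{L^2(\Omega)}^2-\mathcal R(\u_m,\H_m)$); since on this finite-dimensional subspace all norms are equivalent, one has $\P(\boldsymbol{\xi})\cdot\boldsymbol{\xi}>0$ whenever $|\boldsymbol{\xi}|=R$ for $R=R(m)$ large. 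Granting continuity of $\P$, Galdi's lemma then yields $\boldsymbol{\xi}_0$ with $|\boldsymbol{\xi}_0|\le R$ and $\P(\boldsymbol{\xi}_0)=0$, i.e.\ a solution $(\hat{\u}_m,\H_m)$ of \eqref{weak_galerkin_u}-\eqref{weak_galerkin_Q}.

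The one nontrivial point, and the step I expect to be the main obstacle, is the continuity of $\P$, which reduces to continuity of the solution operator $\H_m\mapsto\Q_m$ of \eqref{aux_elliptic}: all integrals in \eqref{weak_galerkin_u}-\eqref{weak_galerkin_Q} are multilinear in $\u_{\mathrm{os}},\hat{\u}_m,\nabla\hat{\u}_m,\Q_m,\nabla\Q_m,\H_m$, so once $\boldsymbol{\xi}^{(j)}\to\boldsymbol{\xi}$ forces $\Q_m^{(j)}\to\Q_m$ strongly in $H^1(\Omega)$, continuity of $\P$ follows. For the latter I would use Lemma~\ref{lemma_existence_of_q_m}: the data $\H_m^{(j)}$ stay bounded in $L^2(\Omega)$, hence $\Q_m^{(j)}$ is bounded in $H^2(\Omega)$ by \eqref{lemma32_H2}, hence precompact in $H^1(\Omega)$; every limit point solves \eqref{aux_elliptic} with data $\H_m$, and — if that problem has a unique solution — the whole sequence converges. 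Uniqueness is the delicate issue, since \eqref{aux_elliptic} is semilinear with the non-monotone double-well nonlinearity $\hat{\H}_M$; it should be recovered from the fact that $\hat{\H}_M$ is $a\,\mathrm{Id}$ minus a monotone operator (the truncated cubic being the gradient of a convex function) together with the weighted Poincar\'e inequality bounding $\|R\|_{L^2(\Omega)}$ by $K\|\nabla R\|_{L^2(\Omega)}^2+W\|R\|_{L^2(\partial\mathcal{P}_{\mathrm{st}})}^2$ for the difference $R$ of two solutions. Should the resulting condition not be available for all admissible $K,W$, one instead invokes the set-valued (Kakutani) analogue of the fixed point theorem applied to the upper-semicontinuous, convex-valued map $\boldsymbol{\xi}\mapsto\{\P(\boldsymbol{\xi})\ \text{over all solutions}\ \Q_m\ \text{of}\ \eqref{aux_elliptic}\}$, which requires no uniqueness.

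Finally, the a priori bound \eqref{a-priori-bound-2} is obtained by collecting estimates already proved. Since $\P(\boldsymbol{\xi}_0)=0$, the energy identity \eqref{raw_energy_est} holds for $(\u_m,\H_m)=(\u_{\mathrm{os}}+\hat{\u}_m,\H_m)$, so Lemma~\ref{lemma:apriori-estimate} gives $\|\nabla\hat{\u}_m\|_{L^2(\Omega)}^2+\|\H_m\|_{L^2(\Omega)}^2<C$; the Poincar\'e inequality together with $\hat{\u}_m|_{\partial\mathcal{P}_{\mathrm{st}}}=0$ and $\Pi$-periodicity upgrades this to $\|\hat{\u}_m\|_{H^1(\Omega)}^2<C$. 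Feeding the bound on $\|\H_m\|_{L^2(\Omega)}$ into \eqref{lemma32_H1}-\eqref{lemma32_H2} of Lemma~\ref{lemma_existence_of_q_m} controls $\|\Q_m\|_{H^2(\Omega)}^2$ and $\|\Q_{\mathrm{pref}}-\Q_m\|_{L^2(\partial\mathcal{P}_{\mathrm{st}})}^2$; and since $\hat{\mathcal F}_M$ grows at most quadratically (in fact linearly), $\int_\Omega\hat{\mathcal F}_M(\Q_m)\,\mathrm{d}x\le C(1+\|\Q_m\|_{L^2(\Omega)}^2)\le C$. Summing these yields \eqref{a-priori-bound-2} with a constant independent of $m$.
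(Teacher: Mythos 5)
Your proof follows essentially the same route as the paper: the finite-dimensional map $\P$ built from the left-hand sides of \eqref{weak_galerkin_u}--\eqref{weak_galerkin_Q}, the coercivity bound $\P(\boldsymbol{\xi})\cdot\boldsymbol{\xi}\geq C_1(\|\nabla\hat{\u}_m\|_{L^2(\Omega)}^2+\|\H_m\|_{L^2(\Omega)}^2)-C_2$ from Lemma~\ref{lemma:apriori-estimate}, the Brouwer-type lemma of Galdi to produce a zero, and then the bound \eqref{a-priori-bound-2} assembled from Lemma~\ref{lemma:apriori-estimate}, the Poincar\'e inequality for $\hat{\u}_m$, and Lemma~\ref{lemma_existence_of_q_m}. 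The continuity/uniqueness issue for the solution map $\H_m\mapsto\Q_m$ of \eqref{aux_elliptic} that you flag is passed over silently in the paper's own proof, so your discussion there is added care rather than a different approach.
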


\subsection{Passing to limit $m\to \infty$}
\label{subsec:galerkin-limit}

From \eqref{a-priori-bound-2} we get that there is a subsequence of $\left\{(\hat{\u}_m,\H_m)\right\}$ such that 
\begin{eqnarray}
\hat{\u}_m & \rightharpoonup & \hat{\u}\text{ in }H^1(\Omega) \label{weak_u}\\
\H_m & \rightharpoonup & \H\text{ in }L^2(\Omega)\label{weak_H}\\
\Q_m & \rightharpoonup & \Q \text{ in }H^2(\Omega). \label{weak_Q}
\end{eqnarray}
Next, we will use the following auxiliary lemma \cite[Lemma 1.3]{lions1972}: 
\begin{lemma}\it 
	Let $\mathcal{O}$ be a bounded domain. Let $p_m(x)$ and $p(x)$ be such functions from $L^q(\mathcal{O})$, $1<q<\infty$, such that 
	\begin{equation}
	\|p_m\|_{L^q(\mathcal{O})}\leq C\text{ and }p_m\to p \text{ a.e. in }\mathcal{O}. 
	\end{equation}
	Then $p_m\rightharpoonup p$ in $L^q(\mathcal{O})$.
	\label{lemma-lions}
\end{lemma}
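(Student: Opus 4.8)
The plan is to prove the weak convergence directly, without extracting subsequences, by combining Egorov's theorem with the absolute continuity of the Lebesgue integral. Let $q'=q/(q-1)$ be the conjugate exponent; since the dual of $L^q(\mathcal{O})$ is $L^{q'}(\mathcal{O})$, it is enough to fix an arbitrary $\phi\in L^{q'}(\mathcal{O})$ and show $\int_{\mathcal{O}}p_m\phi\,\text{d}x\to\int_{\mathcal{O}}p\phi\,\text{d}x$. First I would record that, by Fatou's lemma applied to $|p_m|^q\to|p|^q$ a.e., one has $\|p\|_{L^q(\mathcal{O})}\le\liminf_m\|p_m\|_{L^q(\mathcal{O})}\le C$, hence $\|p_m-p\|_{L^q(\mathcal{O})}\le 2C$ for every $m$.

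Next, given $\varepsilon>0$, absolute continuity of the integral of $|\phi|^{q'}$ — this is exactly where the hypothesis $q>1$, equivalently $q'<\infty$, enters — yields $\delta>0$ such that $\|\phi\|_{L^{q'}(E)}<\varepsilon$ whenever $|E|<\delta$. Since $\mathcal{O}$ has finite measure, Egorov's theorem supplies a measurable $E_\delta\subset\mathcal{O}$ with $|\mathcal{O}\setminus E_\delta|<\delta$ on which $p_m\to p$ uniformly. I would then split $\left|\int_{\mathcal{O}}(p_m-p)\phi\,\text{d}x\right|\le\left|\int_{E_\delta}(p_m-p)\phi\,\text{d}x\right|+\left|\int_{\mathcal{O}\setminus E_\delta}(p_m-p)\phi\,\text{d}x\right|$. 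By H\"older the second term is at most $\|p_m-p\|_{L^q(\mathcal{O})}\,\|\phi\|_{L^{q'}(\mathcal{O}\setminus E_\delta)}\le 2C\varepsilon$; since $\phi\in L^1(\mathcal{O})$ (as $\mathcal{O}$ is bounded and $L^{q'}(\mathcal{O})\subset L^1(\mathcal{O})$), the first term is at most $\big(\sup_{E_\delta}|p_m-p|\big)\,\|\phi\|_{L^1(\mathcal{O})}\to 0$ by the uniform convergence on $E_\delta$. Therefore $\limsup_m\left|\int_{\mathcal{O}}(p_m-p)\phi\,\text{d}x\right|\le 2C\varepsilon$, and letting $\varepsilon\downarrow 0$ completes the argument.

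I do not expect a serious obstacle — the statement is classical — but the one genuinely delicate point, and the reason the assumption $1<q<\infty$ cannot be dropped, is that in the splitting above the factor $\|p_m-p\|_{L^q}$ is only bounded, not small, so all the smallness must be extracted from $\|\phi\|_{L^{q'}(\mathcal{O}\setminus E_\delta)}$; this mechanism fails at $q=1$ (then $q'=\infty$), in accordance with the standard counterexample $p_m=m\,\mathbf{1}_{[0,1/m]}$. An equivalent route would be to use reflexivity of $L^q$ to extract a weakly convergent subsequence, identify its limit as $p$ by the very same Egorov--H\"older splitting tested against $\phi\in L^{q'}$, and then invoke the subsequence principle (a bounded sequence in a reflexive space all of whose weak cluster points equal $p$ converges weakly to $p$); the direct argument above avoids even that.
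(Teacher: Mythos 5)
Your proof is correct and complete: the Fatou bound, Egorov on the bounded (hence finite-measure) domain, and the H\"older/absolute-continuity splitting on $\mathcal{O}\setminus E_\delta$ together give exactly the claimed weak convergence, and you correctly identify $q>1$ (i.e.\ $q'<\infty$) as the point where the argument uses the hypothesis. The paper itself offers no proof here --- it simply cites the lemma from Lions --- and your argument is the standard proof of that cited result, so there is nothing to compare beyond noting that the Fatou step is strictly redundant (the statement already assumes $p\in L^q(\mathcal{O})$) but harmless.
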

From \eqref{a-priori-bound-2} and Lemma~\ref{lemma-lions} we get 
\begin{equation}
\hat{\H}_M(\Q_m) \rightharpoonup \hat{\H}_M(\Q)\text{ in } L^2(\Omega). \label{weak_hat_H} 
\end{equation} 
Using \eqref{weak_H}, \eqref{weak_Q}, and \eqref{weak_hat_H} as well as the trace theorem,
we can pass to the limits $m\to\infty$ in the weak formulation of \eqref{aux_elliptic}: 
\begin{equation}
K\int\limits_{\Omega}\nabla \Q \cdot \nabla G\,\text{d}x+W\int\limits_{\partial \P_{\text{st}}}(\Q_{\text{pref}}-\Q):G\,\text{d}S_x+\int\limits_{\Omega}\hat{\H}_M(\Q):G\,\text{d}x=\int\limits_{\Omega} \H:G \,\text{d}x \label{weak_limit_in_aux}
\end{equation}
for all smooth test functions $G$.


Next, we pass to the limit in \eqref{weak_galerkin_u}-\eqref{weak_galerkin_Q} using \eqref{weak_u}, \eqref{weak_Q}, $H^2(\Omega)\hookrightarrow H^1(\Omega)\hookrightarrow L^2(\Omega)$, and the property that product of strongly and weakly converging sequences weakly converges to the product of corresponding limits. We get ($\u=\u_{\text{os}}  +\hat{\u}$):
\begin{eqnarray}
&&\eta \int\limits_{\Omega}\nabla \hat{\u}:\nabla \Psi_k\,\text{d}x  +\eta \int\limits_{\Omega}\nabla \u_{\text{os}}:\nabla \Psi_k\,\text{d}x+ \rho\int\limits_{\Omega}(\u\cdot \nabla)\u\cdot \Psi_k\,\text{d}x\nonumber \\&& \hspace{120 pt}+\int\limits_{\Omega} \sigma_a(\Q,\H): \nabla \Psi_{k}\,\text{d}x=K\int\limits_{\Omega} \nabla \Q\astrosun \nabla \Q: \nabla \Psi_{k}\,\text{d}x.\label{weak_limit_u}
\end{eqnarray}
\begin{eqnarray}
&&\Gamma\int\limits_{\Omega}\H:\Phi_k \,\text{d}x -
\int\limits_{\Omega}\left((\u\cdot \nabla)\Q-S(\nabla \u,\Q)\right):\Phi_k\,\text{d}x+ \int_{\Omega} F_{\text{ext},M} : \Phi_{k} \,\text{d}x=0.\label{weak_limit_Q}
\end{eqnarray}
Finally, we can drop subscript $M$ in $\hat{\H}_M(\Q)$ and $F_{\text{ext},M}(\Q,\Q_\infty)$ due to the $L^{\infty}${\it -a-priori} bound on solution of \eqref{eq:stst_lc_1}-\eqref{eq:stst_lc_6} in Lemma~\ref{L_inf_Q}.


\section{Well-posedness of time dependent problem $-$ Proof of Theorem~\ref{theorem-finite-time}}
\label{sec:time-dependent}

In this Section, we prove the local-in-time existence of the unique solution with additional regularity by using Banach fixed point theorem. In Section \ref{operator_space}, we will write the time dependent problem in operator form. 
In Section \ref{lipshitz_non_linear} and \ref{local_in_time_theorem}, we will address the Lipshitz properties of the non-linear part and the solvability of the linear part of PDE system. In Section \ref{local_in_time_theorem}, we will prove the local-in-time existence and uniqueness by Banach fixed point theorem.

\subsection{Operators and function spaces} \label{operator_space}
We first define the projection operator $P_{\sigma}: H^{-1}(\Omega) \rightarrow H^{-1}_{\sigma}(\Omega)$ onto the space of divergence-free functions so that if we apply $P_{\sigma}$ to \eqref{eq:time_lc_1}, the pressure $p$  is eliminated. Specifically, the equation \eqref{eq:time_lc_1} becomes
\begin{eqnarray}
	\partial_t \u + P_{\sigma}( \nabla \cdot (\u \otimes \u))  +  \frac{d\V}{dt} - \rho^{-1}\eta  P_{\sigma}(\Delta \u)= \rho^{-1} P_{\sigma}(\nabla \cdot \sigma_{\text{ela}}(\Q)). \label{eq:time_lc_1_rewrite}
\end{eqnarray}

Now we consider the problem consisting of \eqref{eq:time_lc_1_rewrite}, \eqref{eq:time_lc_2}-\eqref{eq:time_lc_6} with force and torque balances \eqref{eq:time_ft_1}, \eqref{eq:time_ft_2_rewrite}. The tuple of unknowns is $\mathcal{U}=(\u,\Q,\boldsymbol{\omega},\V)^{\text{T}}$. We rewrite the problem as 
\begin{equation}
\mathcal{L}\mathcal{U}=\mathcal{N}(\mathcal{U}),\label{lu_eq_to_nu}
\end{equation}
where we define linear operator $\cL$ and non-linear operator $\cN$ as
	\begin{align}
		\cL
		\left(
		\begin{matrix}
			\u \\
			\Q \\
			\boldsymbol{\omega} \\
			\V
		\end{matrix}
		\right) = \partial_t 
		\left(
		\begin{matrix}
			\u \\
			\Q \\
			\boldsymbol{\omega} \\
			\V
		\end{matrix}
		\right)
		- \left(
	\begin{matrix}
		\rho^{-1}\eta P_{\sigma} ( \Delta \u) \\
		\Gamma K\Delta \Q \\
		0 \\
		0
	\end{matrix}
	\right) \label{def_of_L}
\end{align}
and
\begin{align}
	\cN
	\left(
	\begin{matrix}
		\u \\
		\Q \\
		\boldsymbol{\omega} \\
		\V
	\end{matrix}
	\right) = 
	\left(
	\begin{matrix}
		\rho^{-1} P_{\sigma} \left(\nabla \cdot (\sigma_{\text{ela}}(\Q) -  \rho\u \otimes \u) \right) - \dfrac{d\V}{dt} \\
		-\u \cdot \nabla \Q + \Gamma \hat{\H}(\Q) + S(\nabla \u, \Q) + F_{\text{ext}}(\Q, \Q_{\infty}) \\
		\frac{1}{I} \int_{\partial \P} \x \times \sigma \boldsymbol{\nu}  + \boldsymbol{\ell}\,\text{d}S_{\x} \\
		\frac{1}{m} \int_{\partial \P} \sigma \boldsymbol{\nu} \, \text{d}S_\x
	\end{matrix}
	\right).
\end{align}

To handle the nonlinear and inhomogeneous boundary conditions, 
we represent unknown functions $\u$ and $\Q$ as 
\begin{equation*}
\u = \u_h + \u_{\text{os}}\text{ and }\Q = \Q_h + \Q_{\text{os}}.
\end{equation*}
The offset function $\u_\text{os}$ is given by 
\begin{eqnarray}
	&&-\eta \Delta \u_{\text{os}} + \nabla p_{\text{os}} = 0 \text{ in } \Omega \label{def_off_set_u_1}\\
	&&\u_{\text{os}} = u_{\text{sq}}(\boldsymbol{\alpha}(t), \x)\boldsymbol{\tau} + \boldsymbol{\omega}(t) \times \x \text{ on } \partial \P \label{def_off_set_u_2} \\
	&& \u_{\text{os}} \text{ periodic in } \Pi \label{def_off_set_Q_3}
\end{eqnarray}
 
\noindent The offset function $\Q_{\text{os}}$ is defined such that 
\begin{eqnarray}
	&& K\partial_\nu \Q_{\text{os}} = W(\Q_{\text{pref}}-\Q_{\text{os}}) \text{ on }\partial \P \label{def_off_set_Q_2}\\
	&& \Q_{\text{os}} \text{ periodic in } \Pi \label{def_off_set_Q_4}
\end{eqnarray}
Specifically, we define
\begin{eqnarray}
	\Q_{\text{os}}(\x) = \Q_{\text{pref}}\left( \dfrac{\x}{\|\x\|}\right) \psi(\|\x\|), \quad \x\in \Pi\setminus \mathcal{P}\label{def_of_Q_os}
\end{eqnarray}
Here $\psi(\|\x\|)\geq 0$ is a smooth function such that $\psi(\|\x\|) = 1$ for $\x \in  \left(\partial\mathcal{P}+B_{r_{*}}(\boldsymbol{0})\right)\cap \Pi\setminus \mathcal{P}$ with $r_{*}=\text{dist}(\partial \Pi, \partial \mathcal{P})/4$, and  $\psi(\|\x\|) = 0$ when $\|\x\| > 2r_{*}$.  
Boundary condition \eqref{def_off_set_Q_2} is satisfied since $\partial_{\nu} \Q_{\text{os}}=\partial_{\|\x\|}\Q_{\text{os}}=\boldsymbol{0}$ and $\left.\Q_{\text{os}}\right|_{\partial \mathcal{P}}=\Q_{\text{pref}}.$ The offset function $\Q_{\text{os}}$ can be extended periodically so it satisfies \eqref{def_off_set_Q_4} since $\Q_{\text{os}}\equiv 0$ on $\partial \Pi$. We point out that $\Q_{\text{os}}$ is the solution of the Poisson problem with boundary conditions \eqref{def_off_set_Q_2}-\eqref{def_off_set_Q_4} and the partial differential equation $-\Delta \Q_{\text{os}}=\boldsymbol{f}$ with $\boldsymbol{f}=-\Delta\left(\Q_{\text{pref}}\left( \dfrac{\x}{\|\x\|}\right) \psi(\|\x\|)\right)$.
Note that the offset function $\u_\text{os}$ depends on unknown orientation angle $\boldsymbol{\alpha}(t)$ and angular velocity $\boldsymbol{\omega}(t)$ whereas $\Q_{\text{os}}$ does not. 
Therefore, $\u_{\text{os}}$ changes in time $t$ while $\Q_{\text{os}}$ is independent of time~$t$.


With the above, the functions $\u_h$ and $\Q_h$ satisfy homogeneous boundary conditions.
Their equations in $\Omega$ are similar to the original \eqref{eq:time_lc_1_rewrite} and 
\eqref{eq:time_lc_4}. More precisely, these equations with force and torque balances in the form of \eqref{lu_eq_to_nu} look as follows: 
\begin{align}
	\mathcal{L}  
	\left(
	\begin{matrix}
		\u_h \\
		\Q_h \\
		\boldsymbol{\omega} \\
		\V
	\end{matrix}
	\right) =
\mathcal{J} 
	\left(
	\begin{matrix}
		\u_h \\
		\Q_h \\
		\boldsymbol{\omega} \\
		\V
	\end{matrix}
	\right) =:
	 \mathcal{N}
	\left(
	\begin{matrix}
		\u_h + \u_{\text{os}} \\
		\Q_h + \Q_{\text{os}} \\
		\boldsymbol{\omega} \\
		\V
	\end{matrix}
	\right) - 
	\mathcal{L}
	\left(
	\begin{matrix}
		\u_{\text{os}} \\
		\Q_{\text{os}} \\
		0 \\
		0
	\end{matrix}
	\right)
\label{lu_equal_to_ju}
\end{align}
 
To describe the domains of the operator $\mathcal{L}$,
 we introduce the following Banach spaces:
\begin{align}
	X_{\u} &= \left\{\u \in H^2(0, T;L_{\sigma}^{2}(\Omega)) \cap H^{1}(0, T;H_{0, \sigma}^{2}(\Omega)) \left|\begin{array}{l} \u = 0  \text{ on } \partial \P,\\ \u \text{ periodic in } \Pi\end{array}\right. \right\} \\
X_\Q &= \left\{\Q \in H^{2}(0, T;H^1(\Omega)) \cap H^{1}(0, T;H^{3}(\Omega)) \left| \begin{array}{l}\partial_{\nu}\Q = -W\Q \text{ on } \partial \P,\\ \Q \text{ periodic in } \Pi\end{array}\right. \right\}
\end{align}
with corresponding norms
\begin{eqnarray}
	&&\|\u\|_{X_\u} = \left(\|\u\|_{H^2(0, T;L_{\sigma}^{2}(\Omega))}^2 + \|\u\|_{H^{1}(0, T;H_{0, \sigma}^{2}(\Omega))}^2 + \|\u\big|_{t = 0}\|_{H^{2}_{0, \sigma}(\Omega)}^2 + \|\partial_t \u\big|_{t = 0}\|_{H^{1}_{0, \sigma}(\Omega)}^2\right)^{\frac{1}{2}} \nonumber \\
	&&\|\Q\|_{X_\Q} = \left(\|\Q\|_{H^2(0, T;H^{1}(\Omega))}^2 + \|\Q\|_{H^{1}(0, T;H^{3}(\Omega))}^2 + \|\Q\big|_{t = 0}\|_{H^{3}(\Omega)}^2 + \|\partial_t \Q\big|_{t = 0}\|_{H^{2}(\Omega)}^2\right)^{\frac{1}{2}}. \nonumber 
\end{eqnarray}
Introduce also 
\begin{align*}
	Y_{\u} = H^{1}(0, T;L^{2}_{\sigma}(\Omega)),\quad Y_\Q = H^{1}(0, T;H^{1}(\Omega)). 
\end{align*}
Then $X = X_{\u} \times X_\Q \times H^{2}(0, T) \times H^{2}(0, T)$ and $Y = Y_{\u} \times Y_\Q \times H^{1}(0, T) \times H^{1}(0, T)$ are the domain and the range of the operator $\mathcal{L}$. The corresponding norms are:
\begin{eqnarray*}
	&&\|(\u, \Q, \OMEGA, \V)\|_X = (\|\u\|_{X_\u}^2 + \|\Q\|_{X_\Q}^2 + \|\OMEGA\|_{H^{2}(0, T)}^2 + \|\V\|_{H^{2}(0, T)}^2)^\frac{1}{2}, \\
	&&\|(\u, \Q, \OMEGA, \V)\|_Y = (\|\u\|_{Y_\u}^2 + \|\Q\|_{Y_\Q}^2 + \|\OMEGA\|_{H^{1}(0, T)}^2 + \|\V\|_{H^{1}(0, T)}^2)^\frac{1}{2}.
\end{eqnarray*}

\subsection{Lipschitz property of the non-linear part} \label{lipshitz_non_linear}
\noindent In this section we show the Lipschitz property of the non-linear operator $\mathcal{J}$ with respect to the norms of the spaces $X$ and $Y$. Below, we will use short notations for spaces of functions depending on both $t$ and $\x$, for example, $H^{2}(0, T;L^{2}(\Omega))$ will be denoted by $H^2L^2$. We start with the following estimates with constants vanishing as $T\to 0$.
\begin{proposition}\it
	There exists a constant $C(T)$ such that $C(T)\to 0$ as $T\to 0$ and each of inequalities ({\it i})-({\it iv}) below hold for all $f$ and $g$ as long as the left-hand side of the inequality is finite:
	\begin{eqnarray}
		({\it i})&& \left\|f\right\|_{L^{\infty}L^{\infty}}\leq C(T)\|f\|_{H^1H^2},\label{est_inf_inf}\\
		 ({\it ii})&&\left\| f \right\|_{H^{1}L^{\infty}}\leq C(T) (\left\| f \right\|_{H^{1}H^{2}} + \left\| f \right\|_{H^{2}L^{2}}),\label{est_1_inf}\\
		 ({\it iii})&& 	\|fg\|_{H^1H^1}\leq  C(T) (\left\| f \right\|_{H^{1}H^{2}} + \left\| f \right\|_{H^{2}L^{2}}) \cdot (\left\| g \right\|_{H^{1}H^{2}} + \left\| g \right\|_{H^{2}L^{2}}),\label{est_1_1_1}\\
	 ({\it iv})&&	\|fg\|_{H^1H^1} \leq C(T) (\left\| f \right\|_{H^{1}H^{3}} + \left\| f \right\|_{H^{2}H^{1}}) \cdot \left\| g \right\|_{H^{1}H^{1}}.\label{est_1_1_2}
	\end{eqnarray}
\end{proposition}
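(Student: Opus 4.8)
The four estimates are all consequences of Sobolev embeddings in space combined with the one–dimensional Sobolev inequality in time, exploiting that the time interval has length $T$ so that norms of constants (and, after subtracting the trace at $t=0$, of functions vanishing at $t=0$) carry an explicit power of $T$. The plan is to isolate a single ``master lemma'': if $h\in H^1(0,T;B)$ for a Banach space $B$, then
\begin{equation*}
\|h\|_{L^\infty(0,T;B)}\le \|h(0)\|_B + C\sqrt{T}\,\|\partial_t h\|_{L^2(0,T;B)},
\end{equation*}
and moreover, when $h(0)=0$, one even has $\|h\|_{L^2(0,T;B)}\le C\,T\,\|\partial_t h\|_{L^2(0,T;B)}$ (Poincar\'e in time). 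In our setting the relevant functions arise as differences appearing inside $\mathcal J$, but for the stated proposition the cleanest route is to prove the inequalities with $C(T)$ on the understanding that the left-hand side being finite forces enough regularity; one then splits $f=f(0)+\int_0^t\partial_t f$ and notes that the contribution of $f(0)$ is controlled by the trace terms that are built into the $X$-norms (the $\|\cdot|_{t=0}\|$ summands), so effectively only the integral part matters and it produces the vanishing factor. I would state the master lemma first, with its $\sqrt T$ and $T$ constants, then derive (i)--(iv) from it mechanically.

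For (i): by the master lemma with $B=H^2(\Omega)$ and the embedding $H^2(\Omega)\hookrightarrow L^\infty(\Omega)$ (valid for $d=2,3$), $\|f\|_{L^\infty L^\infty}\lesssim \|f\|_{L^\infty H^2}\lesssim \sqrt T\,\|f\|_{H^1H^2}$ once the $t=0$ trace is absorbed; hence $C(T)=C\sqrt T\to0$. For (ii): write $\partial_t$ of the $L^\infty_x$-norm, i.e. estimate $f$ and $\partial_t f$ separately in $L^2(0,T;L^\infty(\Omega))$; the first is $\le C\sqrt T\|f\|_{L^\infty H^2}$, the second is $\le C\|\partial_t f\|_{L^2H^2}$ but we only have $\partial_t f\in L^2 L^2$ paired with $f\in H^1H^2\cap H^2L^2$, so one interpolates $H^2L^2\cap H^1H^2$ to get $\partial_t f\in H^{1/2}$-in-time with values in $H^1$, or more simply uses $\|\partial_t f\|_{L^\infty_x}\lesssim\|\partial_t f\|_{H^2_x}^{1/2}\|\partial_t f\|_{L^2_x}^{1/2}$ combined with $\|\partial_t f\|_{L^2L^2}\lesssim\sqrt T\|\partial_t f\|_{L^\infty L^2}$ and $\|\partial_t f\|_{L^2H^2}\le\|f\|_{H^1H^2}$; balancing the powers of $T$ yields $C(T)\to0$. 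For (iii) and (iv): these are bilinear, so the standard move is $\|fg\|_{H^1H^1}\le\|fg\|_{L^2H^1}+\|\partial_t(fg)\|_{L^2H^1}$, expand $\partial_t(fg)=f_t g+f g_t$, and on $H^1(\Omega)$ (a Banach algebra only in $d=1$, so one must be careful) use instead $\|fg\|_{H^1_x}\lesssim\|f\|_{H^1_x}\|g\|_{W^{1,\infty}_x}+\|f\|_{L^\infty_x}\|g\|_{H^1_x}$ and its symmetric counterpart, then feed in (i)--(ii) to convert the $L^\infty_x$ and $W^{1,\infty}_x$ factors into the higher-regularity space-time norms on the right, each such conversion supplying a factor $C(T)$.

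\textbf{Main obstacle.} The delicate point is (iv): there $g$ is only controlled in $H^1H^1=H^1(0,T;H^1(\Omega))$, with no extra spatial regularity, so every $L^\infty_x$ or $W^{1,\infty}_x$ factor in the Leibniz expansion of $\partial_t(fg)$ must be borne entirely by $f$, which is why $f$ is assumed in $H^1H^3\cap H^2H^1$ — one needs $\partial_t f\in L^\infty_t W^{1,\infty}_x$ (from $H^1H^3\hookrightarrow$, using $H^3\hookrightarrow W^{1,\infty}$ in $d\le3$) and $f\in L^\infty_t W^{1,\infty}_x$ likewise, and then the term $f\,\partial_t g$ needs $f\in L^\infty_t L^\infty_x$ acting on $\partial_t g\in L^2_t H^1_x$, while the term $(\partial_t f)\,g$ needs $\partial_t f\in L^2_t W^{1,\infty}_x$ against $g\in L^\infty_t H^1_x$; keeping track of which norm is hit with $\sqrt T$ versus $T$ and checking that no term comes out $O(1)$ is the only real bookkeeping. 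I expect the remaining estimates to be routine once the master lemma and the two product inequalities on $H^1(\Omega)$ are in place; accordingly I would present the master lemma and the algebra-type bound as two short sublemmas and then dispatch (i)--(iv) in a few lines each, emphasizing at each step where the power of $T$ is generated.
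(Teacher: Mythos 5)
Your reduction of (iii)--(iv) to (i)--(ii) via the Leibniz rule and the two product estimates on $H^1(\Omega)$, and your observation that in (iv) every $L^\infty_x$ or $W^{1,\infty}_x$ factor must be carried by $f$ through $H^3(\Omega)\hookrightarrow W^{1,\infty}(\Omega)$, coincide with the paper's proof (see \eqref{holder_ine_2}, \eqref{holder_ine_3} and the four-term expansions in the proofs of (iii) and (iv)). Where you diverge is in how the small factor is produced: the paper never splits off the value at $t=0$. It chains $\|f\|_{L^\infty L^\infty}\le\|f\|_{L^\infty H^2}$, the one-dimensional Agmon inequality in time \eqref{AG_1d_ine}, H\"older in time \eqref{holder_ine}, and the embedding \eqref{FC_ine} to obtain $\|f\|_{L^\infty L^\infty}\le CT^{1/4}\|f\|_{H^1H^2}$, and for (ii) it uses the spatial Agmon inequality \eqref{AG_3d_ine} with exponents $1/4$--$3/4$; your alternative $\|\partial_t f\|_{L^\infty_x}\lesssim\|\partial_t f\|_{L^2_x}^{1/2}\|\partial_t f\|_{H^2_x}^{1/2}$ is false in dimension $3$, so you would need the $1/4$--$3/4$ version there in any case.

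The genuine gap is in your ``master lemma'' step. The lemma $\|f\|_{L^\infty(0,T;B)}\le\|f(0)\|_B+C\sqrt T\,\|\partial_t f\|_{L^2(0,T;B)}$ is correct, but disposing of $\|f(0)\|_B$ by ``absorbing it into the trace summands of the $X$-norm'' does not prove the proposition as stated: the right-hand sides of \eqref{est_inf_inf}--\eqref{est_1_1_2} are pure space--time norms, with no trace terms and no hypothesis $f(0)=0$, and even if one enlarged the right-hand side by $\|f(0)\|_B$ that term would enter with an $O(1)$ constant, destroying the claimed overall factor $C(T)\to0$. So your argument establishes the estimates only for functions with vanishing (or separately controlled) initial trace, which is a different statement from the one asserted and then used for arbitrary elements of $B_X(0,R)$ in Lemma~\ref{prop_lipshitz_J}. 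Your instinct does point at a real subtlety -- for $f$ independent of $t$ one has $\|f\|_{H^1H^2}=\sqrt T\,\|f\|_{H^2(\Omega)}$, so \eqref{est_inf_inf} cannot hold with constants in \eqref{AG_1d_ine} and \eqref{FC_ine} that are uniform in $T$; the paper's chain treats those time-interpolation constants as fixed -- but to match the paper you must either reproduce its direct interpolation argument, in which the factor $C(T)$ comes solely from the H\"older-in-time steps \eqref{holder_ine} and not from any splitting at $t=0$, or else restate the proposition for functions vanishing at $t=0$ and then track the initial traces explicitly through every application in Lemma~\ref{prop_lipshitz_J}; as written, your proof does neither.
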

\begin{proof}
	In the proof, below $C$ is independent from $T$ unless the dependence is indicated via the following notation $C(T)$. All constants $C(T)$ vanish as $T\to 0$. We will use the following inequalities in the proof
	\begin{align}
		& \left\|f\right\|_{L^{\infty}(\Omega)}\leq C\|f\|_{H^2(\Omega)} \text{ (General Sobolev Inequality, \cite{evans1998pde})} \label{GSI_ine}\\
		& \|f\|_{L^{\infty}(0,T)}\leq C\|f\|^{1/2}_{L^2(0,T)}\|f\|^{1/2}_{H^1(0,T)} \text{ (Agmon's inequality in 1D)} \label{AG_1d_ine}\\
		& \|f\|_{L^{\infty}(\Omega)}\leq C\|f\|^{1/4}_{L^2(\Omega)}\|f\|^{3/4}_{H^2(\Omega)} \text{ (Agmon's inequality in 2D and 3D)} \label{AG_3d_ine}\\
		& \|f\|_{L^{2}(0,T)}\leq T^{1/2} \|f\|_{L^{\infty}(0,T)} \label{holder_ine}\\
		& \|f\|_{L^{\infty}(0,T)}\leq C \|f\|_{H^1(0,T)} \label{FC_ine} \\
		& \|fg\|_{H^1(\Omega)}\leq C \left(\|f\|_{H^1(\Omega)}\|g\|_{L^\infty(\Omega)} + \|f\|_{L^\infty(\Omega)}\|g\|_{H^1(\Omega)}\right) \label{holder_ine_2} \\
		& \|fg\|_{H^1(\Omega)}\leq C\left( \|f\|_{L^2(\Omega)}\|g\|_{W^{1,\infty}(\Omega)} + \|f\|_{W^{1,\infty}(\Omega)}\|g\|_{L^2(\Omega)}\right) \label{holder_ine_3}
	\end{align}
	
	\smallskip 
	
	\noindent{\it Proof of ({\it i})}: 
	\begin{eqnarray}
	\left\| f \right\|_{L^{\infty}L^{\infty}} &&\leq\left\| f \right\|_{L^{\infty}H^{2}} \quad \text{(use \eqref{GSI_ine})}\nonumber\\
&&\leq \left\| f \right\|_{L^{2}H^{2}}^{\frac{1}{2}} \left\| f \right\|_{H^{1}H^{2}}^{\frac{1}{2}} \, (\text{use \eqref{AG_1d_ine}})\nonumber\\
	 &&\leq CT^{1/4}\left\| f \right\|_{L^{\infty}H^{2}}^{\frac{1}{2}} \left\| f \right\|_{H^{1}H^{2}}^{\frac{1}{2}} \quad (\text{use \eqref{holder_ine}})
	 \nonumber\\&&
	 \leq CT^{1/4}\left\| f \right\|_{H^{1}H^{2}} \quad  (\text{use \eqref{FC_ine}}). \label{main_ine_1}
	\end{eqnarray}
	
	\smallskip 
	
	\noindent{\it Proof of ({\it ii})}:
	\begin{align}
	\left\| f \right\|_{H^{1}L^{\infty}} & \leq C\left(\left\| f \right\|_{L^{2}L^{\infty}} + \left\| \partial_t f \right\|_{L^{2}L^{\infty}}\right) \nonumber\\
	&\leq C(T^{1/2} \left\| f \right\|_{L^{\infty}H^{2}} + \left\| \partial_t f \right\|_{L^{2}L^{2}}^{1/4} \cdot \left\| \partial_t f \right\|_{L^{2}H^{2}}^{3/4}) \text{ (use \eqref{GSI_ine}, \eqref{holder_ine}, and \eqref{AG_3d_ine})}\nonumber\\ 
	& \leq C(T^{1/2} \left\| f \right\|_{L^{\infty}H^{2}} + T^{1/8} \left\| \partial_t f \right\|_{L^{\infty}L^{2}}^{1/4} \cdot \left\| \partial_t f \right\|_{L^{2}H^{2}}^{3/4}) \text{ (use \eqref{holder_ine})}\nonumber\\
	& \leq C(T^{1/2} \left\| f \right\|_{H^{1}H^{2}} + T^{1/8} \left\| \partial_t f \right\|_{H^{1}L^{2}}^{1/4} \cdot \left\| \partial_t f \right\|_{L^{2}H^{2}}^{3/4}) \text{ (use \eqref{FC_ine})} \nonumber\\
	& \leq C(T^{1/2} \left\| f \right\|_{H^{1}H^{2}} + T^{1/8} \left\| f \right\|_{H^{2}L^{2}}^{1/4} \cdot \left\|  f \right\|_{H^{1}H^{2}}^{3/4}) \nonumber\\
	& \leq C(T) (\left\| f \right\|_{H^{1}H^{2}} + \left\| f \right\|_{H^{2}L^{2}}). \label{main_ine_2}
	\end{align}

	\noindent{\it Proof of ({\it iii})}:
	\begin{align}
		\|fg\|_{H^1H^1} & \leq \|f\|_{H^1L^{\infty}}\|g\|_{L^{\infty}H^{1}} + \|g\|_{H^1L^{\infty}}\|f\|_{L^{\infty}H^{1}} \nonumber\\
		& \hspace{2cm} + \|f\|_{H^1H^1}\|g\|_{L^{\infty}L^{\infty}} + \|g\|_{H^1H^1}\|f\|_{L^{\infty}L^{\infty}} \text{ (use \eqref{holder_ine_2})} \label{step_1}
	\end{align}
Next, estimate each term in the right-hand side of \eqref{step_1}:
	\begin{align}
		& \|f\|_{H^1L^{\infty}}\|g\|_{L^{\infty}H^{1}} \leq C(T)(\left\| f \right\|_{H^{1}H^{2}} + \left\| f \right\|_{H^{2}L^{2}}) \cdot \|g\|_{H^{1}H^{1}} \text{ (use \eqref{main_ine_2} and \eqref{FC_ine})} \\
		& \|g\|_{H^1L^{\infty}}\|f\|_{L^{\infty}H^{1}} \leq C(T)(\left\| g \right\|_{H^{1}H^{2}} + \left\| g \right\|_{H^{2}L^{2}}) \cdot \|f\|_{H^{1}H^{1}} \text{ (use \eqref{main_ine_2} and \eqref{FC_ine})} \\
		& \|f\|_{H^1H^1}\|g\|_{L^{\infty}L^{\infty}} \leq C(T)\|f\|_{H^1H^1}\|g\|_{H^1H^2} \text{ (use \eqref{main_ine_1})} \\
		& \|g\|_{H^1H^1}\|f\|_{L^{\infty}L^{\infty}} \leq C(T)\|g\|_{H^1H^1}\|f\|_{H^1H^2} \text{ (use \eqref{main_ine_1})}. \label{step_5}
	\end{align}
	Combining \eqref{step_1}-\eqref{step_5}, we obtain \eqref{est_1_1_1}.
	
	\smallskip 
	
	\noindent{\it Proof of ({\it iv})}:
		\begin{align}
		\|fg\|_{H^1H^1} & \leq \|f\|_{H^1W^{1, \infty}}\|g\|_{L^{\infty}L^{2}} + \|f\|_{H^1L^{\infty}}\|g\|_{L^{\infty}H^{1}} \nonumber\\
		& \hspace{2cm} + \|f\|_{L^{\infty}W^{1, \infty}}\|g\|_{H^{1}L^{2}} + \|f\|_{L^{\infty}L^{\infty}}\|g\|_{H^{1}H^{1}} \text{ (use \eqref{holder_ine_3})} \label{step_2_1}
	\end{align}
Next, estimate each term in the right-hand side of \eqref{step_2_1}:
	\begin{align}
		 \|f\|_{H^1W^{1, \infty}}\|g\|_{L^{\infty}L^{2}} &\leq C(T) (\left\| f \right\|_{H^{1}H^{3}} + \left\| f \right\|_{H^{2}H^{1}}) \cdot \|g\|_{H^{1}L^{2}} \text{ (use \eqref{main_ine_2} and \eqref{FC_ine})} \\
		 \|f\|_{H^1L^{\infty}}\|g\|_{L^{\infty}H^1} &\leq C(T) (\left\| f \right\|_{H^{1}H^{2}} + \left\| f \right\|_{H^{2}L^2}) \cdot \|g\|_{H^{1}H^{1}} \text{ (use \eqref{main_ine_2} and \eqref{FC_ine})} \\
		 \|f\|_{L^{\infty}W^{1, \infty}}\|g\|_{H^{1}L^{2}} &\leq 
		C(T) \|f\|_{H^1H^3} \cdot  \|g\|_{H^{1}L^{2}} \text{ (use \eqref{main_ine_1})} \\
		\|f\|_{L^{\infty}L^{\infty}}\|g\|_{H^{1}H^{1}} &\leq 
		C(T) \|f\|_{H^1H^2} \cdot  \|g\|_{H^{1}H^{1}} \text{ (use \eqref{main_ine_1})}. \label{step_2_5}
	\end{align}
	Combining \eqref{step_2_1}-\eqref{step_2_5}, we obtain \eqref{est_1_1_2}.

\end{proof}

\begin{remark}
It is useful to rewrite \eqref{est_1_1_2} with  the norm of space $X_\Q$:
\begin{equation}
\|fg\|_{H^1H^1} \leq C(T) \left\| f \right\|_{X_\Q}  \left\| g \right\|_{H^{1}H^{1}}.\label{est_1_1_2_prime}
\end{equation}
\end{remark}

\begin{lemma}\it \label{prop_lipshitz_J}
 For all $R>0$, there exists a time $T > 0$ such that
for all $(\u^{(i)}_h, \Q^{(i)}_h, \boldsymbol{\omega}^{(i)}, \V^{(i)}) \in B_X(0, R)=\{(\u_h, \Q_h, \boldsymbol{\omega}, \V) \in X \big| \|(\u_h, \Q_h, \boldsymbol{\omega}, \V)\|_{X} \leq R\}$, $i=1,2$, then
		\begin{eqnarray}
			&&\|\mathcal{J}(\u^{(1)}_h, \Q^{(1)}_h, \boldsymbol{\omega}^{(1)}, \V^{(1)}) - \mathcal{J}(\u^{(2)}_h, \Q^{(2)}_h, \boldsymbol{\omega}^{(2)}, \V^{(2)})\|_{Y}  \nonumber \\
			&& \hspace{3.0cm} \leq C(T, R)\|(\u^{(1)}_h, \Q^{(1)}_h, \boldsymbol{\omega}^{(1)}, \V^{(1)}) - (\u^{(2)}_h, \Q^{(2)}_h, \boldsymbol{\omega}^{(2)}, \V^{(2)})\|_{X}. \label{lipshitz_S}
		\end{eqnarray}
Moreover, the constant coefficient $C(T, R) \rightarrow 0$ when $T \rightarrow 0$. 
\end{lemma}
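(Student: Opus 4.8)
The plan is to exploit the multilinear structure of $\mathcal{N}$ together with the embedding/product estimates \eqref{est_inf_inf}--\eqref{est_1_1_2} and \eqref{est_1_1_2_prime} of the preceding Proposition, whose constants $C(T)$ all vanish as $T\to0$. Write $\mathcal{U}^{(i)}=(\u^{(i)}_h,\Q^{(i)}_h,\boldsymbol{\omega}^{(i)},\V^{(i)})$, $\Delta\mathcal{U}=\mathcal{U}^{(1)}-\mathcal{U}^{(2)}$, and recall $\u^{(i)}=\u^{(i)}_h+\u^{(i)}_{\text{os}}$, $\Q^{(i)}=\Q^{(i)}_h+\Q_{\text{os}}$, with $\Q_{\text{os}}$ fixed, time-independent and smooth on $\overline{\Omega}$ and $\u^{(i)}_{\text{os}}$ the steady Stokes lift \eqref{def_off_set_u_1}--\eqref{def_off_set_Q_3} of $u_{\text{sq}}(\boldsymbol{\alpha}^{(i)}(t),\x)\boldsymbol{\tau}+\boldsymbol{\omega}^{(i)}(t)\times\x$, the orientation $\boldsymbol{\alpha}^{(i)}$ being governed by \eqref{omega_vs_alpha}. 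By elliptic regularity for the Stokes operator (and $u_{\text{sq}}\in H^{5/2}(\partial\P)$, smooth in $\boldsymbol{\alpha}$), the map $\boldsymbol{\omega}\mapsto\u_{\text{os}}$ is Lipschitz from $H^2(0,T)$ into $H^2(0,T;H^3(\Omega))$ with norms controlled by $\|\boldsymbol{\omega}\|_{H^2(0,T)}$, so the offsets only produce Lipschitz terms of the required kind — and since $\boldsymbol{\omega}$ carries one more time derivative than the $H^1(0,T)$-valued image, a power of $T$ is available by H\"older in time. It therefore suffices to bound the differences of the four entries of $\mathcal{N}$ evaluated at arguments bounded in $X$ by a constant depending on $R$ and the data.

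For the two bulk entries I would use that every term is at most bilinear in $(\u,\nabla\u)$ and of degree $\le3$ in $\Q$ together with its derivatives up to order three: in the $\u_h$-equation, $\rho^{-1}P_\sigma\nabla\cdot(\sigma_{\text{ela}}(\Q)-\rho\,\u\otimes\u)$ expands into $(\u\cdot\nabla)\u$ and terms of the shape $\nabla\Q\cdot\Delta\Q$, $\Q\cdot\nabla\Delta\Q$ (and, if $\xi\neq0$, $\Q\cdot\hat{\H}(\Q)$, $\nabla\Q\cdot\nabla\Q$ and first derivatives thereof); in the $\Q_h$-equation, $\u\cdot\nabla\Q$ and $S(\nabla\u,\Q)$ are bilinear while $\hat{\H}(\Q)$ and $F_{\text{ext}}(\Q,\Q_\infty)$ are cubic, resp. quadratic, in $\Q$. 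Writing each difference through $a_1b_1-a_2b_2=(a_1-a_2)b_1+a_2(b_1-b_2)$ and $a^3-b^3=(a-b)(a^2+ab+b^2)$ reduces everything to a single difference times factors frozen in $B_X(0,C(R))$; boundedness of $P_\sigma$ on $L^2$ and the estimates \eqref{est_1_1_1}, \eqref{est_1_1_2}, \eqref{est_1_1_2_prime} — together with their obvious $H^1L^2$-valued variants for the $Y_\u$ target, using that $\Q\in X_\Q$ controls $\nabla\Q$ in $H^1H^2\cap H^2L^2$ and $\nabla\Delta\Q$ in $H^1L^2$ — then give a bound $C(T)\,\text{poly}(C(R))\,\|\Delta\mathcal{U}\|_X$ in $Y_\u$, resp. $Y_\Q$.

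The delicate point is the third and fourth entries together with the acceleration. By the balances \eqref{eq:time_ft_1}--\eqref{eq:time_ft_2_rewrite}, $\tfrac{d\V}{dt}$ and $\dot{\boldsymbol{\omega}}$ are boundary integrals of $\sigma\boldsymbol{\nu}=(\sigma_{\text{hydro}}+\sigma_{\text{ela}})\boldsymbol{\nu}$ and of $\boldsymbol{\ell}$ over $\partial\P$; here $\sigma\boldsymbol{\nu}|_{\partial\P}$ involves $\nabla\u|_{\partial\P}$ and, through $\sigma_{\text{ela}}$, the traces $\Delta\Q|_{\partial\P}$, $\nabla\Q|_{\partial\P}$, $\Q|_{\partial\P}$, whereas $\int_{\partial\P}\boldsymbol{\ell}$ is, by \eqref{simplified_ell}, linear in $\Q|_{\partial\P}$. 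Using the trace theorem $H^1(0,T;H^s(\Omega))\hookrightarrow H^1(0,T;H^{s-1/2}(\partial\P))$ and the $X$-regularity ($\u\in H^1H^2$, $\Q\in H^1H^3$), each integrand factor sits in $H^1(0,T;L^p(\partial\P))$ for a suitable $p$; telescoping as above, H\"older on $\partial\P$, and the one-dimensional-in-time inequalities \eqref{AG_1d_ine}, \eqref{holder_ine}, \eqref{FC_ine} (which convert the surplus time regularity, and interpolation in the trace norm, into a power of $T$) then deliver these scalar functions of $t$ in $H^1(0,T)$ with constant $C(T)\,\text{poly}(C(R))$. The linear-in-$(\nabla\u,\Delta\Q)$ parts of these integrals — those with no evident $T$-gain from a product — are either absorbed into the linear operator $\mathcal{L}$ or handled using that the iterates share the prescribed initial data, so that $\Delta\mathcal{U}$ and $\partial_t\Delta\mathcal{U}$ vanish at $t=0$ and interpolation in time supplies the needed $T^\theta$; the offset contribution $\mathcal{L}(\u_{\text{os}},\Q_{\text{os}},0,0)$, whose only nontrivial piece $\partial_t\u_{\text{os}}$ involves $\dot{\boldsymbol{\omega}}$ (a boundary integral again) and $\dot{\boldsymbol{\alpha}}=\boldsymbol{\omega}\times\boldsymbol{\alpha}$, is treated the same way.

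I expect the main obstacle to be exactly this last step: $\sigma\boldsymbol{\nu}$ carries the highest-order derivative of $\Q$ present anywhere in the system — $\Delta\Q$ restricted to $\partial\P$ — so the force and torque functionals are well defined and Lipschitz only when $\Q\in H^3(\Omega)$ and $\u\in H^2(\Omega)$ in space, which is precisely what dictates the strong spaces $X_\u,X_\Q$ and restricts the result to short times. Quantifying the time regularity of these boundary functionals with a constant that vanishes as $T\to0$ — by combining the space trace theorem with the one-dimensional Agmon/H\"older/interpolation estimates — is the crux; the bulk estimates and the control of the offsets are then routine. Collecting all the estimates gives \eqref{lipshitz_S} with $C(T,R)=C(T)\,\text{poly}(C(R))\to0$ as $T\to0$, and (taking $\mathcal{U}^{(2)}=0$) simultaneously shows that $\mathcal{J}$ maps $B_X(0,R)$ into $Y$.
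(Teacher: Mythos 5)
Your plan is correct and follows essentially the same route as the paper's proof: decompose via the offset functions $\u_{\text{os}},\Q_{\text{os}}$ and the Stokes stability estimate in $\boldsymbol{\omega}$, telescope the multilinear terms ($\sigma_{\text{ela}}$, $S(\nabla\u,\Q)$, $\u\otimes\u$, $\u\cdot\nabla\Q$, $\hat{\H}$, $F_{\text{ext}}$) using the product estimates \eqref{est_1_1_1}--\eqref{est_1_1_2_prime} with constants $C(T)\to0$ and the boundedness of $P_\sigma\nabla\cdot$, and control the force/torque functionals by applying the trace theorem to the $H^1H^1$ Lipschitz bound on $\sigma$ together with the simplified form \eqref{simplified_ell} of $\boldsymbol{\ell}$. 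The ``crux'' you single out (time regularity of the boundary functionals, and the linear terms with no product to supply a factor of $T$) is treated in the paper exactly by this trace argument plus the $H^1(0,T)$-versus-$H^2(0,T)$ gain for $\boldsymbol{\omega},\boldsymbol{\alpha}$, so your proposed patches are consistent with, rather than divergent from, the paper's argument.
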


\begin{proof}
$~$


	 \noindent{STEP 1.} We first establish the Lipschitz continuity of $\u_{\text{os}}$, the solution of \eqref{def_off_set_u_1},\eqref{def_off_set_u_2},\eqref{def_off_set_Q_3}, with respect to $\boldsymbol{\alpha}$ and $\boldsymbol{\omega}$. For given $\boldsymbol{\alpha}^{(i)}(t)$ and $\boldsymbol{\omega}^{(i)}(t)$, $i=1,2$, such that $\boldsymbol{\alpha}^{(1)}(0)=\boldsymbol{\alpha}^{(2)}(0)$, one has
	\begin{align*}
		&-\eta \Delta \left(\u_{\text{os}}^{(1)} - \u_{\text{os}}^{(2)} \right) + \nabla \left( p_{\text{os}}^{(1)} - p_{\text{os}}^{(2)} \right) = 0 \text{ in } \Omega, \\
		&\u_{\text{os}}^{(1)} - \u_{\text{os}}^{(2)} = \left( u_{\text{sq}}(\boldsymbol{\alpha}^{(1)}(t), \x) - u_{\text{sq}}(\boldsymbol{\alpha}^{(2)}(t) , \x) \right)\boldsymbol{\tau} + (\boldsymbol{\omega}^{(1)}(t) - \boldsymbol{\omega}^{(2)}(t)) \times \x \text{ on } \partial \P,\\
		&
		\u_{\text{os}}^{(1)}, \u_{\text{os}}^{(2)} \text{ periodic in }\Pi.
	\end{align*}
   Due to the stability of the Stokes operator (similar to \cite[Theorem IV.6.1]{Galdi2011}) 
   \begin{equation}
   \|\u_{\text{os}}\|_{H^2(\Omega)}\leq C \eta^{-1} \left(\|u_{\text{sq}}\|_{L^{2}(\partial \mathcal{P})}+|\OMEGA(t)|\right) \label{elliptic_galdi}
   \end{equation}
   and smooth dependence of $u_{\text{sq}}$ in $\alpha(t)$, we have 
	\begin{eqnarray}
		&&\|\u_{\text{os}}^{(1)} - \u_{\text{os}}^{(2)}\|_{H^2L^2} + \|\u_{\text{os}}^{(1)} - \u_{\text{os}}^{(2)}\|_{H^1H^2} \nonumber \\
		&&\hspace{100pt} \leq C\eta^{-1}\left(\|\boldsymbol{\alpha}^{(1)} - \boldsymbol{\alpha}^{(2)}\|_{H^2(0,T)} + \|\boldsymbol{\omega}^{(1)} - \boldsymbol{\omega}^{(2)}\|_{H^2(0,T)}\right). \label{stability_u_os_1} \\
		&&\|\u_{\text{os}}^{(1)} - \u_{\text{os}}^{(2)}\|_{H^1H^2}  \leq C\eta^{-1}\left(\|\boldsymbol{\alpha}^{(1)} - \boldsymbol{\alpha}^{(2)}\|_{H^1(0,T)} + \|\boldsymbol{\omega}^{(1)} - \boldsymbol{\omega}^{(2)}\|_{H^1(0,T)}\right). \label{stability_u_os_1.1}
	\end{eqnarray}
	Since $\boldsymbol{\alpha}^{(i)}(t) = \boldsymbol{\alpha}^{{(i)}}(0)+\int_{0}^{t} \boldsymbol{\omega}^{(i)}(\tau)\times \boldsymbol{\alpha}^{(i)}(\tau) \text{d}\tau$ (see \eqref{omega_vs_alpha}) and $|\boldsymbol{\alpha}^{(i)}(t)|=1$, $i=1,2$ as well as $\int\limits_0^{T}|h(t)|^2\,\text{d}t\leq T^2\int\limits_0^{T}|h_t(0,T)|^2\,\text{d}t$ for all $h\in H^1(0,T)$, one  gets 
	\begin{eqnarray}
	&& \|\boldsymbol{\alpha}^{(1)} - \boldsymbol{\alpha}^{(2)}\|_{H^2(0,T)} \leq T\|\boldsymbol{\omega}^{(1)} - \boldsymbol{\omega}^{(2)}\|_{H^2(0,T)}, \\
	&& \|\boldsymbol{\omega}^{(1)} - \boldsymbol{\omega}^{(2)}\|_{H^1(0,T)} \leq C(T)\|\boldsymbol{\omega}^{(1)} - \boldsymbol{\omega}^{(2)}\|_{H^2(0,T)}.
	\end{eqnarray} Then \eqref{stability_u_os_1} and \eqref{stability_u_os_1.1} become
	\begin{align}
		&\|\u_{\text{os}}^{(1)} - \u_{\text{os}}^{(2)}\|_{H^2L^2} + \|\u_{\text{os}}^{(1)} - \u_{\text{os}}^{(2)}\|_{H^1H^2} \leq C\eta^{-1}(1+T)\|\boldsymbol{\omega}^{(1)} - \boldsymbol{\omega}^{(2)}\|_{H^2(0,T)},\label{stability_u_os_2} \\
		&\|\u_{\text{os}}^{(1)} - \u_{\text{os}}^{(2)}\|_{H^1H^2} \leq C(T)\eta^{-1}\|\boldsymbol{\omega}^{(1)} - \boldsymbol{\omega}^{(2)}\|_{H^2(0,T)}.\label{stability_u_os_2.1}
	\end{align}
	
	Applying \eqref{elliptic_galdi} to $\partial^{k}_t\u^{(i)}$ with
 $k=0,1,2$ and using the definition of time-independent $\Q_{\text{os}}$ \eqref{def_of_Q_os},
there is a $C>0$ depending on $\Omega$ and $q_{\infty}$ such that 
	\begin{align} 
		&\|\u_{\text{os}}^{(i)}\|_{H^2L^2} + \|\u_{\text{os}}^{(i)}\|_{H^1H^2} \leq C\eta^{-1}(\|\OMEGA^{(i)}\|_{H^2(0,T)}+1), \quad i=1,2. \label{uniform_u_offset}
\\
		&\|\Q_{\text{os}}\|_{H^2H^1} + \|\Q_{\text{os}}\|_{H^1H^3} \leq C. \label{uniform_Q_offset}
	\end{align}
(Though $\Q_{\text{os}}$ is independent of time, here we use its $H^2H^1$ and $H^1H^3$ norms for the clarity of arguments below.) We will also need the following inequality:
 \begin{equation}
 \|\Q^{(i)}\|_{H^1H^3}+\|\Q^{(i)}\|_{H^2H^1}\leq C+R, \quad i=1,2.\label{Q_i_bounded}
 \end{equation} 
 Indeed, 
 \begin{eqnarray}
	&&\|\Q^{(i)}\|_{H^1H^3}+\|\Q^{(i)}\|_{H^2H^1}\leq\|\Q^{(i)}-\Q_{\text{os}}\|_{H^1H^3}+\|\Q^{(i)}-\Q_{\text{os}}\|_{H^2H^1}\nonumber\\&&\hspace{240pt}+ \|\Q_{\text{os}}\|_{H^1H^3}+\|\Q_{\text{os}}\|_{H^2H^1} \nonumber\\
	&&\hspace{130pt}\leq \|\Q^{(i)}_{h}\|_{X_\Q}+ \|\Q_{\text{os}}\|_{H^1H^3}+\|\Q_{\text{os}}\|_{H^2H^1}\nonumber \\
	&& \hspace{130pt}\leq C+R.\nonumber
\end{eqnarray}

	\noindent STEP 2. Here we establish the following inequality: 
	\begin{equation}
	\|\rho^{-1}P_\sigma\nabla\cdot\left[\sigma_{\text{ela}}(\Q^{(1)})-\sigma_{\text{ela}}(\Q^{(2)})\right]\|_{H^1L^{2}_{\sigma}}\leq C(T)\|\Q^{(1)}_h-\Q^{(2)}_h\|_{X_\Q}. \label{what_we_want_in_step_2}
	\end{equation} 
	
	To this end, we first note that since $P_{\sigma}\nabla \cdot: H^{1}(\Omega) \rightarrow L^2_{\sigma}(\Omega)$ is a bounded operator \cite[Lemma II.2.5.2]{Sohr2001}, the inequality \eqref{what_we_want_in_step_2} follows from
	\begin{equation}
	\|\sigma_{\text{ela}}(\Q^{(1)})-\sigma_{\text{ela}}(\Q^{(2)})\|_{H^1H^1}\leq C(T)\|\Q^{(1)}_h-\Q^{(2)}_h\|_{X_\Q}. \label{what_we_want_in_step_2_modified}
	\end{equation}
	We decompose $\sigma_{\text{ela}}$ into five parts $\sigma_{\text{ela}} = \sigma_K + \sigma_a + \sigma_{s}^1 + \sigma_{s}^2 + \sigma_{s}^3$, where
	\begin{align*}
		&\sigma_{K} = -K\nabla \Q \astrosun \nabla \Q \\
		&\sigma_{a} = K(\Q\Delta \Q - \Delta \Q \Q) \\
		&\sigma_{s}^1 = -\dfrac{2\xi}{d}\H \\
		&\sigma_{s}^2 = -\xi\left[\H\Q + \Q\H\right] + \dfrac{2\xi}{d}\text{Tr}(\Q\H) \\
		&\sigma_{s}^3 = 2\xi\left[\Q\text{Tr}(\Q\H)\right].
	\end{align*}
	Here $\sigma_{s}^{1}, \sigma_{s}^{2}, \sigma_{s}^{3}$ are the linear, bilinear and 
trilinear part of $\sigma_{s}:=\sigma^{1}_s+\sigma^{2}_s+\sigma^{3}_s$, respectively.

	\medskip
	\noindent \textit{Part 1}: $\sigma_{K}(\Q)$.
	\begin{eqnarray}
	&&	\hspace{-30pt}\| \sigma_{K}(\Q^{(1)}) - \sigma_{K}(\Q^{(2)}) \|_{H^1H^1} =\| (K\nabla\Q^{(1)} \astrosun \nabla\Q^{(1)} - K\nabla \Q^{(2)}\astrosun\nabla\Q^{(2)}) \| _{H^1H^1} \nonumber \\
		&&\hspace{15pt}\leq  CK \left(\| \nabla \Q^{(1)}\astrosun \nabla(\Q^{(1)} - \Q^{(2)})\| _{H^1H^1} + \| \nabla(\Q^{(1)} - \Q^{(2)})\astrosun\nabla \Q^{(2)} \| _{H^1H^1}\right) \nonumber \\
		&&\hspace{15pt}=  CK \left(\| \nabla \Q^{(1)}\astrosun \nabla(\Q^{(1)}_h - \Q^{(2)}_h)\| _{H^1H^1} + \| \nabla(\Q^{(1)}_h - \Q^{(2)}_h)\astrosun\nabla \Q^{(2)} \| _{H^1H^1}\right).
	\end{eqnarray}
	Using \eqref{est_1_1_1}, \eqref{uniform_Q_offset} and that $(\u^{(i)}_h, \Q^{(i)}_h, \boldsymbol{\omega}^{(i)}, \V^{(i)}) \in B_X(0, R)$, we get  
	\begin{align}
		& CK \left(\| \nabla \Q^{(1)}\astrosun \nabla(\Q^{(1)}_h - \Q^{(2)}_h)\| _{H^1H^1} + \| \nabla(\Q^{(1)}_h - \Q^{(2)}_h)\astrosun\nabla \Q^{(2)} \| _{H^1H^1}\right) \nonumber \\
		 &\quad \leq  C(T)K\left( \|\nabla(\Q^{(1)}_h - \Q^{(2)}_h)\|_{H^1H^2} + \|\nabla(\Q^{(1)}_h - \Q^{(2)}_h)\|_{H^2L^2} \right) \nonumber \\
		&\hspace*{2cm}\left[\left( \|\nabla\Q^{(1)}\|_{H^1H^2} + \|\nabla\Q^{(1)}\|_{H^2L^2} \right) + \left( \|\nabla\Q^{(2)}\|_{H^1H^2} + \|\nabla\Q^{(2)}\|_{H^2L^2} \right)\right] \nonumber \\
		 &\quad \leq C(T)K(R + 1)\|\Q^{(1)}_h - \Q^{(2)}_h\|_{X_\Q} \nonumber\\
		 &\quad \leq C(T)\|\Q^{(1)}_h - \Q^{(2)}_h\|_{X_\Q}.
	\end{align}
We note that the generic constant $C(T)$ may change from line to line and may depend on, for example, $K$, $R$, $C$ from \eqref{uniform_Q_offset} and $T$ (but recall that notation $C(T)$ also means that $C(T)\to 0$ as $T\to 0$). We sometimes do not merge a parameter, as for example, $K$ in the second line of the above chain of inequalities, to indicate what we used to obtain a bound.     
	
		\medskip
	
		\noindent \textit{Part 2}: $\sigma_{a}(\Q))$.	
			\begin{eqnarray}
				&&\hspace{-40pt} \| \sigma_{a}(\Q^{(1)}) - \sigma_{a}(\Q^{(2)}) \|_{H^1H^1} \nonumber \\
				&&\hspace{10pt}\leq K\| \Q^{(1)} \Delta\Q^{(1)} - \Q^{(2)} \Delta\Q^{(2)} \|_{H^1H^1} + K\| \Delta\Q^{(1)} \Q^{(1)} - \Delta\Q^{(2)} \Q^{(2)} \|_{H^1H^1}. \label{sigma_a_lip}
			\end{eqnarray}

				\noindent Applying \eqref{est_1_1_2} for the first term in the right hand side of \eqref{sigma_a_lip}, one can get
				\begin{eqnarray}
					&& \hspace{-30 pt}\| \Q^{(1)}\Delta\Q^{(1)} - \Q^{(2)} \Delta\Q^{(2)} \|_{H^1H^1} \nonumber\\
					&& \leq  \| (\Q^{(1)}_h - \Q^{(2)}_h) \Delta(\Q^{(1)}_h + \Q_{\text{os}}) \|_{H^1H^1} + \|(\Q^{(2)}_h + \Q_{\text{os}}) \Delta(\Q^{(1)}_h - \Q^{(2)}_h  ) \|_{H^1H^1} \nonumber\\
					&& \leq  C(T) \| \Q^{(1)}_h - \Q^{(2)}_h \|_{X_\Q} \cdot \| \Delta(\Q^{(1)}_h + \Q_{\text{os}})\|_{H^{1}H^{1}}   \nonumber\\
					&& \hspace{60pt}+ C(T) \| \Q^{(2)}_h + \Q_{\text{os}} \|_{X_\Q} \| \Delta(\Q^{(1)}_h - \Q^{(2)}_h) \|_{H^{1}H^{1}} \nonumber\\
					&&\leq  C(T)(R + 1) \|\Q^{(1)}_h - \Q^{(2)}_h\|_{X_\Q}.   \label{sigma_a_lip_step_2}
				\end{eqnarray}
				Applying same arguments for the second term in the right-hand side of \eqref{sigma_a_lip}, one can obtain
				\begin{equation*}
				\|\sigma_{a}(\Q^{(1)}) - \sigma_{a}(\Q^{(2)})  \|_{H^1H^1} \leq  KC(T, R) \|\Q^{(1)}_h - \Q^{(2)}_h\|_{X_\Q}.
				\end{equation*}

			\medskip

				\noindent \textit{Part 3}: $\sigma_{s}^1(\Q)$.
	\begin{eqnarray}
		&&\hspace{-30pt}\| \sigma_{s}^1(\Q^{(1)}) - \sigma_{s}^1(\Q^{(2)})  \|_{H^1H^1} =\dfrac{2\xi}{d}\| \H(\Q^{(1)}) - \H(\Q^{(2)})\| _{H^1H^1} \nonumber \\
		&&\hspace{10pt}\leq  \dfrac{2\xi K}{d}\| \Delta\Q^{(1)} - \Delta\Q^{(2)} \| _{H^1H^1}+ \dfrac{2\xi|a|}{d}\| \Q^{(1)} - \Q^{(2)}\| _{H^1H^1} \nonumber\\
		&&\hspace{50pt} +\dfrac{2\xi|c|}{d} \| \Q^{(1)}\text{Tr}((\Q^{(1)})^2) - \Q^{(2)}\text{Tr}((\Q^{(2)})^2) \| _{H^1H^1}. \label{sigma_s_1_lip_1}
		\end{eqnarray}
		The first two terms in the right-hand side of \eqref{sigma_s_1_lip_1} are bounded as follows:
		\begin{equation*}
		\dfrac{2\xi K}{d}\| \Delta\Q^{(1)} - \Delta\Q^{(2)} \| _{H^1H^1}+ \dfrac{2\xi|a|}{d}\| \Q^{(1)} - \Q^{(2)} \| _{H^1H^1}\leq C \|\Q^{(1)} - \Q^{(2)}\|_{X_\Q}.
		\end{equation*}
		Next, we bound the third (cubic) term  in the right-hand side of \eqref{sigma_s_1_lip_1}. Note  
		\begin{align}
			\| \Q^{(1)}\text{Tr}((\Q^{(1)})^2) - \Q^{(2)}\text{Tr}((\Q^{(2)})^2) \| _{H^1H^1}& \leq\| (\Q^{(1)} - \Q^{(2)})\text{Tr}(\Q^{(1)}\Q^{(1)})\| _{H^1H^1} \nonumber\\
		 &\hspace{10pt}+\| \Q^{(2)}\text{Tr}((\Q^{(1)} - \Q^{(2)})\Q^{(1)})\| _{H^1H^1}\nonumber\\& 
		 \hspace{10pt}+ \| \Q^{(2)}\text{Tr}(\Q^{(2)}(\Q^{(1)} - \Q^{(2)}))\| _{H^1H^1} \label{sigma_s_1_lip_2}
		\end{align}
		We show how to bound the first term in the right-hand side of \eqref{sigma_s_1_lip_2}. Other terms are bounded in the same way. 
		Apply \eqref{est_1_1_2_prime} twice to obtain
		\begin{align}
			&\| (\Q^{(1)} - \Q^{(2)})\text{Tr}(\Q^{(1)}\Q^{(1)})\| _{H^1H^1} \nonumber \\ 
			&\hspace{20pt}\leq  C(T)\|\Q^{(1)}_h - \Q^{(2)}_h\|_{X_\Q} \||\Q^{(1)}||\Q^{(1)}|\|_{H^1H^1} \nonumber\\
			&\hspace{20pt} \leq C(T)\|\Q^{(1)}_h - \Q^{(2)}_h\|_{X_\Q}\left(\|\Q^{(1)}_h\|_{X_\Q}+\|\Q_{\text{os}}\|_{L^2H^3}\right)\left(\|\Q^{(1)}_h\|_{H^1H^1}+\|\Q_{\text{os}}\|_{ H^1H^1}\right)\nonumber\\
			&\hspace{20pt} \leq  C(T)(R+1)^2\|\Q^{(1)}_h - \Q^{(2)}_h\|_{X_\Q}. \label{sigma_s_1_lip_3}
		\end{align}
		Thus, we have 
		\begin{align}
		\| \H(\Q^{(1)}) - \H(\Q^{(2)}) \| _{H^1H^1}\leq C(T,R) \|\Q^{(1)}_h - \Q^{(2)}_h\|_{X_\Q}, \label{lip_H}
		\end{align}
		which, in view of \eqref{sigma_s_1_lip_1}, implies
		\begin{align}
			\| \sigma_{s}^1(\Q^i) - \sigma_{s}^1(\Q^j) \|_{H^1H^1} \leq C(T,R) \|\Q^{(1)}_h - \Q^{(2)}_h\|_{X_\Q}. \label{lip_sigma_s_1}
		\end{align}
		
		\noindent \textit{Part 4}: $ \sigma_{s}^2(\Q)$. This is part, 
we will need the following bound
		\begin{align}
		\|\H(\Q^{(i)})\|_{H^1H^1} \leq  C(T,R) , \quad i=1,2. \label{bounds_for_H}
		\end{align}
		which can be obtained by applying same arguments as in  
\eqref{sigma_s_1_lip_1}-\eqref{sigma_s_1_lip_3} for $i=1,2$
		\begin{align}
			\|\H(\Q^{(i)})\|_{H^1H^1} &\leq C(\|\Delta \Q^{(i)}\|_{H^1H^1} + \|\Q^{(i)}\|_{H^1H^1} + \|\Q^{(i)}\text{Tr}((\Q^{(i)})^2)\|_{H^1H^1})\nonumber \\
			& \leq C(T) (\|\Q^{(i)}_h\|_{X_{\Q}} + \|\Q^{(i)}_h\|_{X_{\Q}}^3+1)\leq C(T,R). \nonumber
		\end{align}

Now, we can estimate,
		\begin{align}
			&\| \sigma_{s}^2(\Q^{(1)}) - \sigma_{s}^2(\Q^{(2)}) \|_{H^1H^1} \leq  \xi\|\Q^{(1)}\H(\Q^{(1)}) - \Q^{(2)}\H(\Q^{(2)})\|_{H^1H^1}\nonumber\\
			&\hspace*{6.5cm}  + \xi\|\H(\Q^{(1)})\Q^{(1)} - \H(\Q^{(2)})\Q^{(2)}\|_{H^1H^1} \nonumber\\
			& \hspace*{6.5cm} +\dfrac{2\xi}{d}\|\text{Tr}(\Q^{(1)}\H(\Q^{(1)}) - \Q^{(2)}\H(\Q^{(2)}))\|_{H^1H^1}\nonumber \\
			&\hspace*{5cm}\leq C\|\Q^{(1)}\H(\Q^{(1)}) - \Q^{(2)}\H(\Q^{(2)})\|_{H^1H^1}.
			 \label{sigma_s_2_lip}
			\end{align}
		Next, we use the triangle inequality, \eqref{est_1_1_2_prime},  \eqref{lip_H}, and \eqref{bounds_for_H}: 
			\begin{align}
				&\|\Q^{(1)}\H(\Q^{(1)}) - \Q^{(2)}\H(\Q^{(2)})\|_{H^1H^1}\nonumber \\
				& \hspace{40pt} \leq C(T) \| \Q^{(1)}_h - \Q^{(2)}_h \|_{X_\Q} \| \H(\Q^{(1)}) \|_{H^{1}H^{1}}   \nonumber\\
					& \hspace{60pt}+ C(T) \| \Q^{(2)}_h + \Q_{\text{os}} \|_{X_\Q}\| \H(\Q^{(1)}) - \H(\Q^{(2)}) \|_{H^{1}H^{1}}\nonumber \\
					&\hspace{40pt} 
					\leq C(T,R)\| \Q^{(1)}_h - \Q^{(2)}_h \|_{X_\Q}. \nonumber
			\end{align}
Therefore, 
\begin{align}
&\| \sigma_{s}^2(\Q^{(1)}) - \sigma_{s}^2(\Q^{(2)}) \|_{H^1H^1}\leq 	 C(T,R)\| \Q^{(1)}_h - \Q^{(2)}_h \|_{X_\Q}.
\end{align}

\noindent \textit{Part 5}: $ \sigma_{s}^3(\Q)$.
	\begin{align}
	&\| \sigma_{s}^3(\Q^{(1)}) - \sigma_{s}^3(\Q^{(2)}) \|_{H^1H^1} \leq  2\xi\|\Q^{(1)}\text{Tr}(\Q^{(1)}\H(\Q^{(1)})) - \Q^{(2)}\text{Tr}(\Q^{(2)}\H(\Q^{(2)}))\|_{H^1H^1}\nonumber\\
	& \hspace{140pt}\leq C\| (\Q^{(1)} - \Q^{(2)})\text{Tr}(\Q^{(1)}\H(\Q^{(1)}))\| _{H^1H^1} \nonumber\\
	&\hspace{160pt}+ C\| \Q^{(2)}\text{Tr}((\Q^{(1)} - \Q^{(2)})\H(\Q^{(1)}))\| _{H^1H^1}\nonumber\\& 
	\hspace{160pt}+ C\| \Q^{(2)}\text{Tr}(\Q^{(2)}(\H(\Q^{(1)}) - \H(\Q^{(2)})))\|_{H^1H^1}.
	\end{align}
	Next, applying the same arguments as in \eqref{sigma_s_1_lip_3} with bounds \eqref{lip_H} and \eqref{bounds_for_H} we get 
	\begin{align}
	&\| \sigma_{s}^3(\Q^{(1)}) - \sigma_{s}^3(\Q^{(2)}) \|_{H^1H^1}\leq 	 C(T,R)\| \Q^{(1)}_h - \Q^{(2)}_h \|_{X_\Q}.
	\end{align}

\noindent STEP 3. Here we establish the following inequality: 
\begin{eqnarray}
	&&\|S^2(\nabla \u^{(1)}, \Q^{(1)})-S^2(\nabla \u^{(2)}, \Q^{(2)})\|_{H^1H^{1}} \nonumber \\
	&& \hspace*{1cm} \leq C(T)\left(\|\Q^{(1)}_h-\Q^{(2)}_h\|_{X_\Q} + \|\u^{(1)}_h-\u^{(2)}_h\|_{X_\u} + \|\boldsymbol{\omega}^{(1)} - \boldsymbol{\omega}^{(1)}\|_{H^2(0,T)} \right) \label{what_we_want_in_step_3}.
\end{eqnarray}

	To this end, similar to how we treated $\sigma_s$ in STEP 2, we split $S(\nabla \u, \Q)$ into three parts: 
	\begin{eqnarray}
		S(\nabla \u,\Q) &&=(\xi \D + \A)\left(\Q+\frac{\mathbb I}{d}\right)+\left(\Q+\frac{\mathbb I}{d}\right)(\xi \D- \A)-2\xi\left(\Q+\frac{\mathbb I}{d}\right)\text{tr}(\Q\nabla \u) \nonumber \\
		&& = S^1 + S^2 + S^3, 
		\end{eqnarray}
		where 
\begin{align*}
	& S^1 (\nabla \u)= \frac{2\xi}{d} \D \\
	& S^2 (\nabla \u,\Q)= \xi(\D\Q + \Q\D) + (\A\Q - \Q\A) - \frac{2\xi}{d}\text{tr}(\Q\nabla \u)\\
	& S^3 (\nabla \u,\Q)= - 2\xi\Q\text{tr}(\Q\nabla \u)
\end{align*}		
		are correspondingly the linear, bilinear and trilinear part of $S$. First note that using \eqref{stability_u_os_2}, \eqref{stability_u_os_2.1} and \eqref{uniform_u_offset}, one gets
				\begin{align}
					\|\u^{(1)} - \u^{(2)}\|_{H^1H^2 \cap H^2L^2} &\leq \|\u^{(1)}_h - \u^{(2)}_h\|_{X_\u} + \|\u^{(1)}_{\text{os}} - \u^{(2)}_{\text{os}}\|_{H^1H^2 \cap H^2L^2} \nonumber \\
					& \leq \|\u^{(1)}_h - \u^{(2)}_h\|_{X_\u} + C\eta^{-1}(1+T)\|\boldsymbol{\omega}^{(1)} - \boldsymbol{\omega}^{(2)}\|_{H^2(0,T)} \label{Lipshitz_u}
				\end{align} 
				and
				\begin{align}
					\|\u^{(i)}\|_{H^1H^2 \cap H^2L^2} &\leq \|\u^{(i)}_h\|_{X_\u} + \|\u^{(i)}_{\text{os}}\|_{H^1H^2 \cap H^2L^2} \nonumber \\
					& \leq \|\u^{(i)}_h\|_{X_\u} + C\eta^{-1}(\|\OMEGA^{(i)}\|_{H^2(0,T)}+1), \quad i=1,2. \label{Bound_u}
				\end{align}

		\medskip
		\noindent \textit{Part 1}: $S^1(\nabla \u, \Q)$.

		Since $\D(\u) = \frac{1}{2}(\nabla \u + (\nabla \u)^{T})$, using \eqref{stability_u_os_2.1} one gets
		\begin{align}
			& \| \D(\u^{(1)}) - \D(\u^{(1)})  \|_{H^1H^1} \leq \|\u^{(1)} - \u^{(2)}\|_{H^1H^2}\nonumber\\ 
			& \hspace*{3cm}\leq \|\u^{(1)}_h - \u^{(2)}_h\|_{H^1H^2}+\|\u^{(1)}_{\text{os}} - \u^{(2)}_{\text{os}}\|_{H^1H^2}\nonumber \\
			& \hspace*{3cm} \leq \|\u^{(1)}_h - \u^{(2)}_h\|_{X_\u} + C(T)\eta^{-1}\|\boldsymbol{\omega}^{(1)} - \boldsymbol{\omega}^{(2)}\|_{H^2(0,T)}. \label{needed_for_sigma_hydro}
		\end{align}
		Then
		\begin{eqnarray}
			&&\|S^1(\nabla \u^{(1)}, \Q^{(1)})-S^1(\nabla \u^{(2)}, \Q^{(2)})\|_{H^1H^{1}} \nonumber \\
			&& \hspace*{2cm} \leq C(T)\left(\|\Q^{(1)}_h-\Q^{(2)}_h\|_{X_\Q} + \|\u^{(1)}_h-\u^{(2)}_h\|_{X_\u} + \|\boldsymbol{\omega}^{(1)} - \boldsymbol{\omega}^{(2)}\|_{H^2} \right).
		\end{eqnarray}

		\medskip 
	\noindent \textit{Part 2}: $S^2(\nabla \u, \Q)$.
	\begin{eqnarray}
	&&\|S^2(\nabla \u^{(1)},\Q^{(1)})-S^2(\nabla \u^{(2)},\Q^{(2)})\|_{H^1H^1}\leq 
C \|\nabla \u^{(1)}\Q^{(1)}-\nabla \u^{(2)}\Q^{(2)}\|_{H^1H^1}\nonumber\\
&&\hspace{60 pt}\leq C\|(\nabla \u^{(1)}-\nabla \u^{(2)}) \Q^{(1)}\|_{H^1H^1}+C
\|\nabla \u^{(2)}(\Q^{(1)}-\Q^{(2)})\|_{H^1H^1} \label{S_2_1}
		\end{eqnarray}

Apply \eqref{est_1_1_2}, \eqref{Q_i_bounded}  and \eqref{Lipshitz_u} to obtain
\begin{eqnarray}
	&&\|(\nabla \u^{(1)}-\nabla \u^{(2)}) \Q^{(1)}\|_{H^1H^1}\nonumber\\
	&&\hspace{70pt} \leq C(T)\|\nabla \u^{(1)}-\nabla \u^{(2)}\|_{H^1H^1} \left(\| \Q^{(1)} \|_{H^1H^3} +\| \Q^{(1)} \|_{H^2H^1} \right) \nonumber \\
	&&\hspace{70pt}\leq C(T)\|\u^{(1)}-\u^{(2)}\|_{H^1H^2} \left(\| \Q^{(1)} \|_{H^1H^3} + \| \Q^{(1)} \|_{H^2H^1} \right)\nonumber\\
	&& \hspace{70pt}\leq C(T) (R + 1) \left( \|\u^{(1)}_h - \u^{(2)}_h\|_{X_\u} + \|\boldsymbol{\omega}^{(1)} - \boldsymbol{\omega}^{(2)}\|_{H^2(0,T)} \right) \nonumber  
\end{eqnarray}
 
Applying the similar arguments to the second term in the right-hand side of \eqref{S_2_1}, we obtain
\begin{eqnarray}
	&&\hspace{-20pt}\|S^2(\nabla \u^{(1)}, \Q^{(1)})-S^2(\nabla \u^{(2)}, \Q^{(2)})\|_{H^1H^{1}} \nonumber \\
	&& \hspace*{1cm} \leq C(T)\left(\|\Q^{(1)}_h-\Q^{(2)}_h\|_{X_\Q} + \|\u^{(1)}_h-\u^{(2)}_h\|_{X_\u} + \|\boldsymbol{\omega}^{(1)} - \boldsymbol{\omega}^{(2)}\|_{H^2(0,T)} \right).
\end{eqnarray}

\medskip 
\noindent \textit{Part 3}: $S^3(\nabla \u, \Q)$.
\begin{eqnarray}
	&&\hspace{-20pt}\|S^3(\nabla \u^{(1)},\Q^{(1)})-S^3(\nabla \u^{(2)},\Q^{(2)})\|_{H^1H^1}\nonumber \\
	&&\hspace{60pt}\leq 
2\xi \|\Q^{(1)}\text{tr}\left(\nabla \u^{(1)}\Q^{(1)}\right)-\Q^{(2)}\text{tr}\left(\nabla \u^{(2)}\Q^{(2)}\right)\|_{H^1H^1}\nonumber\\
&&\hspace{60 pt}\leq 2\xi\|\left(\Q^{(1)} - \Q^{(2)}\right)\text{tr}\left(\nabla \u^{(1)}\Q^{(1)}\right)\|_{H^1H^1}\nonumber \\
&&\hspace{70pt} + 2\xi\|\Q^{(2)}\text{tr}\left(\nabla \left(\u^{(1)} - \u^{(2)}\right)\Q^{(1)}\right)\|_{H^1H^1} \nonumber\\ 
&&\hspace{70pt}+ 2\xi\|\Q^{(2)}\text{tr}\left(\nabla \u^{(2)}\left(\Q^{(1)} - \Q^{(2)}\right)\right)\|_{H^1H^1}. \label{S_3_1}
		\end{eqnarray}
Using same arguments as in \eqref{sigma_s_1_lip_3} and taking into account \eqref{uniform_u_offset} and \eqref{Q_i_bounded}, we obtain
\begin{eqnarray}
&&\|\left(\Q^{(1)} - \Q^{(2)}\right)\text{tr}\left(\nabla \u^{(1)}\Q^{(1)}\right)\|_{H^1H^1} \nonumber \\
&&\hspace{60pt}\leq C(T) \|\Q^{(1)} - \Q^{(2)}\|_{H^1H^3 \cap H^2H^1} \|\Q^{(1)}\|_{H^1H^3 \cap H^2H^1}\|\nabla \u^{(1)}\|_{H^1H^1}\nonumber\\
&&\hspace{60pt}\leq C(T)(R^2+1)\|\Q^{(1)}_h - \Q^{(2)}_h\|_{X_\Q}.\nonumber 
\end{eqnarray}

Applying similar arguments for the other two terms in the right-hand side of \eqref{S_3_1}, we obtain 
\begin{eqnarray}
	&&\hspace{-20pt}\|S^3(\nabla \u^{(1)}, \Q^{(1)})-S^3(\nabla \u^{(2)}, \Q^{(2)})\|_{H^1H^{1}} \nonumber \\
	&& \hspace{1cm} \leq C(T,R)\left(\|\Q^{(1)}_h-\Q^{(2)}_h\|_{X_\Q} + \|\u^{(1)}_h-\u^{(2)}_h\|_{X_\u} + \|\boldsymbol{\omega}^{(1)} - \boldsymbol{\omega}^{(2)}\|_{H^2(0,T)} \right).
\end{eqnarray}

				\bigskip
			
				\noindent STEP 4. 
Finally, we show the Lipschitz properties of all the remaining  terms in $\mathcal{J}$. 		
				
				\medskip 
				
				\noindent \textit{Part 1}: $P_{\sigma}(\nabla \cdot (\u \otimes \u))$.

				We again use the fact that $P_{\sigma}\nabla \cdot: H^{1}(\Omega) \rightarrow L^2_{\sigma}(\Omega)$ is a bounded operator:
				\begin{eqnarray}
					&&\| P_{\sigma}(\nabla \cdot (\u^{(1)} \otimes \u^{(1)})) - P_{\sigma}(\nabla \cdot (\u^{(2)} \otimes \u^{(2)})) \|_{H^1L^2_\sigma} \nonumber \\
					&& \hspace*{3.5cm}\leq \| \u^{(1)} \otimes \u^{(1)} - \u^{(2)} \otimes \u^{(2)} \|_{H^1H^1} \nonumber\\
					&& \hspace*{3.5cm} =\| (\u^{(1)} - \u^{(2)}) \otimes \u^{(1)} - \u^{(2)} \otimes (\u^{(1)} - \u^{(2)}) \|_{H^1H^1}.
				\end{eqnarray}
				We apply the same arguments in \textit{Part 1} of STEP 2, that is, apply \eqref{est_1_1_1}, along with \eqref{Lipshitz_u} and \eqref{Bound_u}, to get: 
				\begin{align}
					& \hspace{-10pt}\| \u^{(1)}\otimes (\u^{(1)} - \u^{(2)}) \| _{H^1H^1} +\| (\u^{(1)} - \u^{(2)})\otimes  \u^{(2)} \| _{H^1H^1} \nonumber \\
					 &\quad \leq  C(T)\left( \|\u^{(1)} - \u^{(2)}\|_{H^1H^2} + \|\u^{(1)} - \u^{(2)}\|_{H^2L^2} \right) \nonumber \\
					&\hspace*{2cm}\times\left[\left( \|\u^{(1)}\|_{H^1H^2} + \|\u^{(1)}\|_{H^2L^2} \right) + \left( \|\u^{(2)}\|_{H^1H^2} + \|\u^{(2)}\|_{H^2L^2} \right)\right] \nonumber\\
					& \quad\leq C(T) \left(\|\u^{(1)}_h - \u^{(2)}_h\|_{X_\u} + C\left(\|\boldsymbol{\omega}^{(1)} - \boldsymbol{\omega}^{(2)}\|_{H^2(0,T)}\right) \right) \nonumber \\
					&\hspace*{2cm}\times\left[\left( \|\u^{(1)}_h\|_{X_\u} + C(\|\OMEGA^{(1)}\|_{H^2(0,T)}+1) \right) + \left( \|\u^{(2)}_h\|_{X_\u} + C(\|\OMEGA^{(2)}\|_{H^2(0,T)}+1) \right) \right] \nonumber \\
					& \quad \leq C(T) (R + 1) \left(\|(\u^{(1)}_h, \Q^{(1)}_h, \boldsymbol{\omega}^{(1)}, \V^{(1)}) - (\u^{(2)}_h, \Q^{(2)}_h, \boldsymbol{\omega}^{(2)}, \V^{(2)})\|_{X} \right). \nonumber 
				\end{align}
			Thus, we obtained
				\begin{eqnarray}
			&&\| P_{\sigma}(\nabla \cdot (\u^{(1)} \otimes \u^{(1)})) - P_{\sigma}(\nabla \cdot (\u^{(2)} \otimes \u^{(2)})) \|_{H^1L^2_\sigma} \nonumber \\
			&& \hspace*{3.5cm}\leq C(T,R) \left(\|(\u^{(1)}_h, \Q^{(1)}_h, \boldsymbol{\omega}^{(1)}, \V^{(1)}) - (\u^{(2)}_h, \Q^{(2)}_h, \boldsymbol{\omega}^{(2)}, \V^{(2)})\|_{X} \right).\nonumber
			\end{eqnarray}

			\medskip

		\noindent \textit{Part 2}: $\u \cdot \nabla \Q$.
				\begin{eqnarray}
					&&\|  \u^{(1)} \cdot \nabla \Q^{(1)} -  \u^{(2)} \cdot \nabla \Q^{(2)} \|_{H^1H^1} \nonumber \\
					&& \hspace*{3.5cm} \leq \| (\u^{(1)} - \u^{(2)}) \cdot \nabla \Q^{(1)} \|_{H^1H^1} + \| \u^{(2)} \cdot \nabla (\Q^{(1)} - \Q^{(2)}) \|_{H^1H^1}.
				\end{eqnarray}
			Apply the same arguments as in \textit{Part 1} of STEP 2, that is, apply \eqref{est_1_1_1}, : 
				\begin{align}
					& \| \u^{(1)}\cdot \nabla (\Q^{(1)} - \Q^{(2)})\|_{H^1H^1} + \| (\u^{(1)} - \u^{(2)})\cdot  \nabla \Q^{(2)} \|_{H^1H^1} \nonumber \\
					& \quad \leq C(T)\left( \|\u^{(1)}\|_{H^1H^2} + \|\u^{(1)}\|_{H^2L^2} \right) \left( \|\nabla \Q^{(1)}_h - \nabla \Q^{(2)}_{h}\|_{H^1H^2} + \|\nabla \Q^{(1)}_h - \nabla \Q^{(2)}_h\|_{H^2L^2} \right) \nonumber\\ 
					& \hspace*{1cm} + C(T)\left( \|\u^{(1)} - \u^{(2)}\|_{H^1H^2} + \|\u^{(1)} - \u^{(2)}\|_{H^2L^2} \right)\left( \|\nabla \Q^{(2)}\|_{H^1H^2} + \|\nabla \Q^{(2)}\|_{H^2L^2}\right) \nonumber \\
					& \quad \leq C(T)\left( \|\u^{(1)}\|_{H^1H^2} + \|\u^{(1)}\|_{H^2L^2} \right) \|\Q^{(1)}_h - \Q^{(2)}_{h}\|_{X_\Q} \nonumber\\ 
					& \hspace*{1cm} + C(T)\left( \|\u^{(1)} - \u^{(2)}\|_{H^1H^2} + \|\u^{(1)} - \u^{(2)}\|_{H^2L^2} \right)\left( \|\Q^{(2)}\|_{H^1H^3} + \|\Q^{(2)}\|_{H^2H^1}\right). 
				\end{align}
				Using \eqref{Lipshitz_u}, \eqref{Bound_u}, and \eqref{Q_i_bounded} one gets
				\begin{align}
					& C(T)\left( \|\u^{(1)}\|_{H^1H^2} + \|\u^{(1)}\|_{H^2L^2} \right) \|\Q^{(1)}_h - \Q^{(2)}_{h}\|_{X_\Q} \nonumber\\ 
					& \hspace*{1cm} + C(T)\left( \|\u^{(1)} - \u^{(2)}\|_{H^1H^2} + \|\u^{(1)} - \u^{(2)}\|_{H^2L^2} \right)\left( \|\Q^{(2)}\|_{H^1H^3} + \|\Q^{(2)}\|_{H^2H^1}\right) \nonumber\\
					& \quad \leq C(T)\left( \|\u^{(1)}_h\|_{X_\u} + C(\|\OMEGA^{(1)}\|_{H^2(0,T)}+1) \right) \left(\|\Q^{(1)}_h - \Q^{(2)}_{h}\|_{X_\Q} \right)\nonumber\\
					& \hspace*{2cm} + C(T)\left(\|\u^{(1)}_h - \u^{(2)}_h\|_{X_\u} + C\left(\|\boldsymbol{\omega}^{(1)} - \boldsymbol{\omega}^{(2)}\|_{H^2(0,T)}\right) \right) \left( \|\Q^{(2)}\|_{H^1H^3} + \|\Q^{(2)}\|_{H^2H^1}\right)\nonumber\\
					& \quad \leq C(T) (R + 1) \|(\u^{(1)}_h, \Q^{(1)}_h, \boldsymbol{\omega}^{(1)}, \V^{(1)}) - (\u^{(2)}_h, \Q^{(2)}_h, \boldsymbol{\omega}^{(2)}, \V^{(2)})\|_{X}.
				\end{align}
				Thus, we obtained 
				\begin{eqnarray}
				&&\|  \u^{(1)} \cdot \nabla \Q^{(1)} -  \u^{(2)} \cdot \nabla \Q^{(2)} \|_{H^1H^1} \nonumber \\
				&& \hspace*{3.5cm} \leq C(T,R)\|(\u^{(1)}_h, \Q^{(1)}_h, \boldsymbol{\omega}^{(1)}, \V^{(1)}) - (\u^{(2)}_h, \Q^{(2)}_h, \boldsymbol{\omega}^{(2)}, \V^{(2)})\|_{X}. \nonumber 
				\end{eqnarray}
				
				\medskip

				\noindent \textit{Part 3}: $\hat{\H}(\Q)$. Here, we need to show
				\begin{eqnarray}
				&&\nonumber \| \H(\Q^{(1)}) - \H(\Q^{(2)}) \| _{H^1H^1}  \leq C(T,R)\|(\u^{(1)}_h, \Q^{(1)}_h, \boldsymbol{\omega}^{(1)}, \V^{(1)}) - (\u^{(2)}_h, \Q^{(2)}_h, \boldsymbol{\omega}^{(2)}, \V^{(2)})\|_{X}. \nonumber 
				\end{eqnarray}
				This bound follows directly from the proof on \textit{Part 3} of STEP 2. 
			
		\medskip
			\noindent \textit{Part 4:} $\dfrac{\text{d}\V}{\text{d}t}$.
			\begin{eqnarray}
			&	\hspace{-40pt}\|\dfrac{\text{d}\V^{(1)}}{\text{d}t} - \dfrac{\text{d}\V^{(2)}}{\text{d}t}\|_{H^1(0,T)}& \leq \|\V^{(1)} - \V^{(2)}\|_{H^2(0,T)}\nonumber  \\ 
			&&\leq  C(T) (\|(\u^{(1)}_h, \Q^{(1)}_h, \V^{(1)}, \boldsymbol{\omega}^{(1)}) - (\u^{(2)}_h, \Q^{(2)}_h, \V^{(2)}, \boldsymbol{\omega}^{(2)})\|_{X}).
			\end{eqnarray}

			\medskip
			
			\noindent \textit{Part 5:} $F_{\text{ext}}(\Q,\Q_{\infty})$.
			We first note that for both definitions \eqref{2d_f_ext} and \eqref{3d_f_ext}, $F_{\text{ext}}(\Q,\Q_{\infty})$ is a quadratic function of $\Q$: 
			\begin{equation*}
			F_{\text{ext}}(\Q,\Q_{\infty})=\mathcal{B}(\Q,\Q)=\sum\limits_{k,l,m,n=1}^{d}b_{klmn}Q_{kl}Q_{mn},
			\end{equation*}
			where coefficients $b_{klmn}$ depend on $\Q_{\infty}$. Then use of the triangle inequality and \eqref{est_1_1_2_prime}: 
			\begin{eqnarray}
			&&\|F_{\text{ext}}(\Q^{(1)},\Q_{\infty})-F_{\text{ext}}(\Q^{(2)},\Q_{\infty})\|_{H^1H^1}\leq \|\mathcal{B}(\Q^{(1)}-\Q^{(2)},\Q^{(1)})\|_{H^1H^1}\nonumber\\
			&&\hspace{280pt}+\|\mathcal{B}(\Q^{(2)},\Q^{(1)}-\Q^{(2)})\|_{H^1H^1}\nonumber\\
			&&\hspace{170pt}\leq C(T)\|\Q^{(1)}-\Q^{(2)}\|_{X_\Q}(\|Q^{(1)}\|_{H^1H^1}+\|Q^{(2)}\|_{H^1H^1})\nonumber\\
			&&\hspace{170pt}\leq C(T,R)\|\Q^{(1)}_h-\Q^{(2)}_h\|_{X_\Q}.\nonumber
			\end{eqnarray}

			\medskip 
			
			\noindent \textit{Part 6:}  $\frac{1}{I} \int_{\partial \P} \x \times \sigma \boldsymbol{\nu} + \ell \,dS_\x$.
			
			Since $\sigma = \sigma_{\text{hydro}} + \sigma_{\text{ela}} $ we use \eqref{what_we_want_in_step_2_modified} and \eqref{needed_for_sigma_hydro}
			\begin{align}
				&\| \sigma(\nabla \u^{(1)},\Q^{(1)}) - \sigma(\nabla \u^{(1)},\Q^{(1)})\|_{H^1H^{1}} \nonumber\\
				& \hspace{2cm} \leq C(T, R) (\|(\u^{(1)}_h, \Q^{(1)}_h, \V^{(1)}, \boldsymbol{\omega}^{(1)}) - (\u^{(2)}_h, \Q^{(2)}_h, \V^{(2)}, \boldsymbol{\omega}^{(2)})\|_{X}. \nonumber
			\end{align}
			Using trace theorem, we get
			\begin{align}
				& \left\|\frac{1}{I} \int_{\partial \P} \x \times \sigma(\Q^{(1)}) \boldsymbol{\nu} dS_\x - \frac{1}{I} \int_{\partial \P} \x \times \sigma(\Q^{(2)}) \boldsymbol{\nu} dS_\x \right\|_{H^{1}(0,T)} 
				\leq  C\|\sigma(\Q^{(1)}) - \sigma(\Q^{(2)})\|_{H^1H^1}
				\nonumber \\
				&\hspace{100pt}\leq C(T, R) \|(\u^{(1)}_h, \Q^{(1)}_h, \V^{(1)}, \boldsymbol{\omega}^{(1)}) - (\u^{(2)}_h, \Q^{(2)}_h, \V^{(2)}, \boldsymbol{\omega}^{(2)})\|_{X}. \label{integral_balance_bound_1}
			\end{align}
		To estimate the term with $\ell$, recall its simplified form \eqref{simplified_ell}:
		\begin{eqnarray}
		&&\left\|\int\limits_{\partial \mathcal{P}}\ell (\Q^{(1)})\,\text{d}S_x-\int\limits_{\partial \mathcal{P}}\ell(\Q^{(2)})\,\text{d}S_x\right\|_{H^1(0,T)}
		\leq C \|\Q^{(1)}-\Q^{(2)}\|_{H^1(0,T;L^1(\partial \mathcal{P}))}\nonumber \\
		&&\hspace{200pt}\leq C\|\Q^{(1)}_h-\Q^{(2)}_h\|_{H^1H^1}\nonumber\\
		&&\hspace{200pt}\leq C(T)\|\Q^{(1)}_h-\Q^{(2)}_h\|_{H^2H^1}\nonumber\\
		&&\hspace{80pt}\leq C(T) \|(\u^{(1)}_h, \Q^{(1)}_h, \V^{(1)}, \boldsymbol{\omega}^{(1)}) - (\u^{(2)}_h, \Q^{(2)}_h, \V^{(2)}, \boldsymbol{\omega}^{(2)})\|_{X}.\label{integral_balance_bound_2}
		\end{eqnarray}
			
		\medskip
		
		\noindent \textit{Part 7}: $\frac{1}{m} \int_{\partial \P} \sigma \boldsymbol{\nu} dS_\x$. The same argument for \textit{Part 6} also works for $\frac{1}{m} \int_{\partial \P} \sigma \boldsymbol{\nu} dS_\x$.

			\medskip
		\noindent \textit{Part 8}: $\partial_t \u_{\text{os}}$.
		Using \eqref{stability_u_os_2.1}, we have
			\begin{align}
				\|\partial_t \u_{\text{os}}^{(1)} - \partial_t \u_{\text{os}}^{(2)}\|_{H^{1}L^{2}_\sigma} \leq C(T) \|\boldsymbol{\omega}^{(1)} - \boldsymbol{\omega}^{(2)}\|_{H^{2}(0,T)}. 
			\end{align}
			
			\medskip 
			
Now, collecting all bounds from STEPS 2 - 4, we have \eqref{lipshitz_S} and thus Lemma~\ref{prop_lipshitz_J} is proven. 
\end{proof}

\subsection{Proof of Theorem~\ref{theorem-finite-time} (local in time existence)} \label{local_in_time_theorem}
\noindent In this section we prove the well-posedness of the time-dependent
problem. The equation  \eqref{lu_equal_to_ju} can be rewritten as $\mathcal{K}\mathcal{U}_h=\mathcal{U}_h$ where $\mathcal{K}:=\mathcal{L}^{-1}\mathcal{J}:X\to X$ and $\mathcal{U}_h=(\u_h,\Q_h,\OMEGA,\boldsymbol{V})$. The inverse linear operator $\cL^{-1}$ is bounded, as stated in the following proposition.
\begin{proposition}\it \label{prop_solvability_L}
	For all $(\boldsymbol{f}_\u, \boldsymbol{f}_\Q, \boldsymbol{f}_{\OMEGA}, \boldsymbol{f}_\V) \in Y$, and time $T \in (0, 1]$, linear system 
	\begin{eqnarray}
		\cL(\u_h, \Q_h, \boldsymbol{\omega}, \V) = (\boldsymbol{f}_\u, \boldsymbol{f}_\Q, \boldsymbol{f}_{\OMEGA}, \boldsymbol{f}_\V) \label{linear_L}
	\end{eqnarray}
	has a unique solution such that $\u_h|_{t=0}=0$, $\Q_h|_{t=0}=0$, $\OMEGA|_{t=0}=\boldsymbol{V}|_{t=0}=0$ and 
	\begin{eqnarray}
		\|(\u_h, \Q_h, \boldsymbol{\omega}, \V)\|_{X} \leq C\|(\boldsymbol{f}_\u, \boldsymbol{f}_\Q, \boldsymbol{f}_{\OMEGA}, \boldsymbol{f}_\V)\|_{Y}, \label{solvability}
	\end{eqnarray}
	where the constant $C$ is independent of time $T$ and choice of $(\boldsymbol{f}_\u, \boldsymbol{f}_\Q, \boldsymbol{f}_{\OMEGA}, \boldsymbol{f}_\V)$. 
\end{proposition}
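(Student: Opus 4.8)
The plan is to exploit that $\mathcal{L}$ in \eqref{def_of_L} is block–diagonal in the four unknowns $(\u_h,\Q_h,\boldsymbol{\omega},\V)$, so it suffices to invert each block separately with a bound uniform in $T\in(0,1]$ and then sum. The $\boldsymbol{\omega}$– and $\V$–blocks are the decoupled linear ODEs $\partial_t\boldsymbol{\omega}=\boldsymbol{f}_{\OMEGA}$, $\partial_t\V=\boldsymbol{f}_\V$; integrating from $0$ and using $\|h\|_{L^2(0,T)}\le T\|\partial_t h\|_{L^2(0,T)}$ for $h(0)=0$ gives uniqueness together with $\|\boldsymbol{\omega}\|_{H^2(0,T)}+\|\V\|_{H^2(0,T)}\le C(\|\boldsymbol{f}_{\OMEGA}\|_{H^1(0,T)}+\|\boldsymbol{f}_\V\|_{H^1(0,T)})$ with $C$ absolute. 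The two remaining blocks are decoupled linear parabolic problems with zero initial data: the rescaled Stokes equation $\partial_t\u_h+A_S\u_h=\boldsymbol{f}_\u$ with $A_S=-\rho^{-1}\eta\,P_\sigma\Delta$ on $L^2_\sigma(\Omega)$, homogeneous Dirichlet on $\partial\P$ and periodicity on $\partial\Pi$; and the heat equation $\partial_t\Q_h+A_\Q\Q_h=\boldsymbol{f}_\Q$ with $A_\Q=-\Gamma K\Delta$ subject to the homogeneous Robin condition on $\partial\P$ and periodicity.

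For the Stokes block I would invoke maximal $L^2$–regularity. On the bounded smooth domain $\Omega$ the operator $A_S$ is self-adjoint and positive on $L^2_\sigma(\Omega)$ with compact resolvent, and elliptic regularity for the Stokes system identifies $D(A_S)=H^2(\Omega)\cap H^1_{0,\sigma}(\Omega)$ and $D(A_S^{1/2})=H^1_{0,\sigma}(\Omega)$; hence $A_S$ generates a bounded analytic semigroup and enjoys maximal $L^2$–regularity (true for any self-adjoint positive operator on a Hilbert space). Because $\u_h|_{t=0}=0$, I would extend $\boldsymbol{f}_\u$ by zero to $(0,\infty)$, solve on the half-line where the maximal-regularity constant is absolute, and restrict to $(0,T)$, obtaining $\u_h\in H^1(0,T;L^2_\sigma)\cap L^2(0,T;H^2_{0,\sigma})$ with constant independent of $T$. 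To gain the extra time-derivative required by $X_\u$, I differentiate the equation in $t$: $v:=\partial_t\u_h$ solves $\partial_t v+A_S v=\partial_t\boldsymbol{f}_\u$ with $v|_{t=0}=\boldsymbol{f}_\u|_{t=0}$, and applying the same half-line estimate together with the spectral bound $\|e^{-\cdot A_S}v_0\|_{H^1(0,\infty;L^2)\cap L^2(0,\infty;D(A_S))}\le C\|v_0\|_{D(A_S^{1/2})}$ for the initial-data contribution gives $\u_h\in H^2(0,T;L^2_\sigma)\cap H^1(0,T;H^2_{0,\sigma})$, $T$–uniformly. The trace terms in $\|\cdot\|_{X_\u}$ are then automatic since this space embeds into $C([0,T];H^2_{0,\sigma})$ with $\partial_t\u_h\in C([0,T];H^1_{0,\sigma})$.

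The $\Q$–block follows the same scheme, with $A_\Q$ self-adjoint and nonnegative on $L^2(\Omega)$ (positive after a harmless shift if $W=0$); elliptic regularity for the Robin/periodic problem gives $D(A_\Q)\cong\{H^2:\text{Robin BC}\}$, $D(A_\Q^{1/2})\cong H^1(\Omega)$ and $D(A_\Q^{3/2})\cong\{H^3:\text{Robin BC}\}$. Since $\boldsymbol{f}_\Q\in H^1(0,T;H^1)=H^1(0,T;D(A_\Q^{1/2}))$, I would apply maximal regularity after conjugating by $A_\Q^{1/2}$ — equivalently, estimate $w:=A_\Q^{1/2}\Q_h$, which solves a heat equation with right-hand side in $L^2(0,T;L^2)$ and zero data — to obtain $\Q_h\in H^1(0,T;H^1)\cap L^2(0,T;H^3)$; differentiating once more in time yields $\Q_h\in H^2(0,T;H^1)$, hence $\Q_h\in X_\Q$, with $T$–uniform constants. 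Summing the four block estimates gives \eqref{solvability} with $C$ independent of $T\in(0,1]$, and uniqueness in \eqref{linear_L} with zero initial data follows blockwise from an energy estimate and Gronwall's inequality.

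The step I expect to be the main obstacle is keeping the constant in \eqref{solvability} independent of $T$ through the time–bootstrap: one cannot merely quote maximal regularity on a finite interval (whose constant need not be $T$–uniform for nonzero data), so one must route the estimate through the half-line using the zero initial data, and carry a $T$–uniform bound for the semigroup term $e^{-tA}v_0$ produced by differentiating in time. This makes the argument sensitive to the precise interpolation (trace) spaces $D(A_S^{1/2})=H^1_{0,\sigma}$ and $D(A_\Q^{1/2})=H^1$ in which the initial traces $\boldsymbol{f}_\u|_{t=0}$, $\boldsymbol{f}_\Q|_{t=0}$ must lie: for the $\Q$–block this is free from $\boldsymbol{f}_\Q\in H^1(0,T;H^1)\hookrightarrow C([0,T];H^1)$, while for the $\u$–block it is the point at which the compatibility encoded in the definitions of $X$, $Y$ and the Dirichlet condition on $\partial\P$ enters.
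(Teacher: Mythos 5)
Your route — exploiting the block--diagonal form of $\mathcal{L}$, classical ODE theory for the $\OMEGA$ and $\V$ components, and maximal $L^2$--regularity with zero extension to the half-line to keep constants $T$-uniform for the Stokes and heat blocks — is in spirit the same as the paper's, whose proof consists of citing \cite[Proposition 4.2]{AbeDolLiu2013} for $(\u_h,\Q_h)$ and classical ODE theory for the remaining components. The genuine gap is in the time-differentiation bootstrap that is supposed to supply the second time derivative. Since $\u_h(0)=0$, the differentiated problem for $v=\partial_t\u_h$ has initial value $v(0)=\boldsymbol{f}_\u(0)$, and your semigroup estimate for the initial-data contribution requires $\boldsymbol{f}_\u(0)\in D(A_S^{1/2})=H^1_{0,\sigma}(\Omega)$; but $\boldsymbol{f}_\u\in Y_\u=H^1(0,T;L^2_\sigma)$ only yields $\boldsymbol{f}_\u(0)\in L^2_\sigma$, and, contrary to your closing remark, no such compatibility is ``encoded'' in $Y$: the space $Y$ imposes nothing on time-traces at $t=0$. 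For the $\Q$-block your accounting is off by one derivative: to reach $\Q_h\in H^2(0,T;H^1)\cap H^1(0,T;H^3)$ the differentiated problem needs $v(0)=\boldsymbol{f}_\Q(0)$ in the midpoint space between $H^1$ and $H^3$ (i.e.\ $H^2$ with the Robin condition, essentially $D(A_\Q)$), whereas $\boldsymbol{f}_\Q\in H^1(0,T;H^1)\hookrightarrow C([0,T];H^1)$ only gives $\boldsymbol{f}_\Q(0)\in H^1$; the trace space $H^1$ suffices only for the level $H^2(0,T;L^2)\cap H^1(0,T;H^2)$, one derivative short of $X_\Q$.

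This is not a removable technicality. If $(\u_h,\Q_h)$ lies in $X_\u\times X_\Q$ with zero initial data, then $\partial_t\u_h\in H^1(0,T;L^2)\cap L^2(0,T;H^2)\subset C([0,T];H^1)$ and $\partial_t\Q_h\in H^1(0,T;H^1)\cap L^2(0,T;H^3)\subset C([0,T];H^2)$, and evaluating the equations at $t=0$ gives $\partial_t\u_h(0)=\boldsymbol{f}_\u(0)$, $\partial_t\Q_h(0)=\boldsymbol{f}_\Q(0)$; indeed the $X$-norm itself contains $\|\partial_t\u_h|_{t=0}\|_{H^1}=\|\boldsymbol{f}_\u(0)\|_{H^1}$ and $\|\partial_t\Q_h|_{t=0}\|_{H^2}=\|\boldsymbol{f}_\Q(0)\|_{H^2}$, which $\|\cdot\|_Y$ does not control. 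So what your argument actually establishes is
\begin{equation*}
\|(\u_h,\Q_h,\OMEGA,\V)\|_X\le C\left(\|(\boldsymbol{f}_\u,\boldsymbol{f}_\Q,\boldsymbol{f}_\OMEGA,\boldsymbol{f}_\V)\|_Y+\|\boldsymbol{f}_\u(0)\|_{H^1(\Omega)}+\|\boldsymbol{f}_\Q(0)\|_{H^2(\Omega)}\right),
\end{equation*}
not \eqref{solvability}. To close the argument you must either impose/record the compatibility at $t=0$ (vanishing traces $\boldsymbol{f}_\u(0)=0$, $\boldsymbol{f}_\Q(0)=0$ being the cleanest choice, and also the standard way to obtain $T$-uniform constants), or add the corresponding trace norms to the right-hand side, or invoke the precise formulation of the linear maximal-regularity statement in \cite{AbeDolLiu2013} that the paper relies on, where the data space is set up so that this compatibility is available. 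The ODE blocks and your first (undifferentiated) maximal-regularity step, including the half-line extension for $T$-uniformity, are fine.
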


To prove this proposition, one can follow \cite{AbeDolLiu2013}. Specifically, for the first two components, $\u_h$ and $\Q_h$, of system \eqref{linear_L}, we adapt the proof from \cite[Proposition 4.2]{AbeDolLiu2013}. For the last two components, which are not present in \cite{AbeDolLiu2013}, the statement naturally follows from the classical ODE theory.

\smallskip 

Next, 
according to propositions \ref{prop_lipshitz_J} and \ref{prop_solvability_L}, we have that
\begin{eqnarray}
	&&\|\mathcal{K}(\u_h^{(1)}, \Q_h^{(1)}, \boldsymbol{\omega}^{(1)}, \V^{(1)}) - \mathcal{K}(\u_h^{(2)}, \Q_h^{(2)}, \boldsymbol{\omega}^{(2)}, \V^{(2)})\|_{X} \nonumber\\
	&&\hspace*{2cm}\leq  C(T) \|(\u_h^{(1)}, \Q_h^{(1)}, \boldsymbol{\omega}^{(1)}, \V^{(1)}) - (\u_h^{(2)}, \Q_h^{(2)}, \boldsymbol{\omega}^{(2)}, \V^{(2)})\|_{X}.
\end{eqnarray} 
Recall that $C(T)$ depends on $T$ in such a way that $C(T) \rightarrow 0$ when $T \rightarrow 0$. Choose $T$ such that $C(T) < 1$. Then using Banach's fixed point theorem, we obtain that there exists a unique fixed point $(\u_h, \Q_h, \boldsymbol{\omega}, \V)$ of operator $\mathcal{K}$. Next, define $\u_{\text{os}}$ via \eqref{def_off_set_u_1}-\eqref{def_off_set_Q_3}. Finally, the tuple $(\u_h + \u_{\text{os}}, \Q_h + \Q_{\text{os}}, \boldsymbol{\omega}, \V)$ is a solution to the original system \eqref{eq:time_lc_1_rewrite}, \eqref{eq:time_lc_2}-\eqref{eq:time_lc_6} with force and torque balances \eqref{eq:time_ft_1}, \eqref{eq:time_ft_2_rewrite} for $0\leq t\leq T$. Theorem~\ref{theorem-finite-time} is proven.

\section{Homogenization: two-scale expansion}
\label{sec:homo_expansion}
In this section, we will perform formal two-scale expansion for \eqref{eq:pre_homo_Q_eq_re}-\eqref{eq:pre_homo_u_bc_re}. To this end, we introduce fast variable $\y=\ve^{-1}\x$ and represent the unknowns as  
\begin{equation}
\left\{\begin{array}{rcl}
\u(\x;\ve)&=& \bar{\u}(\x,\y)=\u^{(0)}(\x,\y)+\ve \u^{(1)}(\x,\y)+\cdots\\
p(\x;\ve)&=&\bar{p}(\x,\y)= p^{(0)}(\x,\y)+\ve p^{(1)}(\x,\y)+\cdots \\
\Q(\x;\ve)&=&\bar{\Q}(\x,\y)=\Q^{(0)}(\x,\y)+\ve \Q^{(1)}(\x,\y)+\cdots
\end{array}\right.\label{two-scale-ansatz}
\end{equation}
We will frequently use following identities for $f(\x,\ve)=\bar{f}(\x,\y)$ with $\y=\ve^{-1}\x$: 
\begin{eqnarray}
\nabla f&=&\nabla_{\x} \bar{f}+\ve^{-1}\nabla_{\y}\bar{f},\\
\Delta f&=&\Delta_{\x} \bar{f} +2\ve^{-1}\nabla_\y\cdot \nabla_{\x}\bar{f}+\ve^{-2}\Delta_{\y}\bar{f}.
\end{eqnarray}

\smallskip 

The derivation of the homogenized limit consists of the following steps.

\noindent{STEP 1.} {\it Show that $\u^{(0)}=0$}. Substitute two-scale representations \eqref{two-scale-ansatz} for $\u,\Q,$ and $p$ into \eqref{eq:pre_homo_u_eq_re} and $\nabla\cdot \u=0$. We get that at level $\ve^{-1}$:
\begin{equation*}
-\tilde{\eta}\Delta_{\y}\u^{(0)}+\nabla_{\y}p^{(0)}=0 \text{ and }\nabla_{\y}\cdot\u^{(0)}=0, 
\end{equation*}
with the boundary condition $\u^{(0)}=0$ on $\partial\mathcal{P}_{\ve}$. Thus, 
we can conclude that $\u^{(0)}=0$. 

\medskip 

\noindent{STEP 2.} {\it Find an equation for $\Q^{(0)}$}. In this step we expand equations \eqref{eq:pre_homo_Q_eq_re} and \eqref{eq:pre_homo_Q_bc_re} in $\ve$. To this end, we write the weak formulation of these two equations for 
arbitrary test function $\Phi \in H^1(\Omega_\ve;\mathbb{R}^{d\times d})$: 
\begin{eqnarray}
&&- \gamma\ve \int\limits_{\Omega_\ve} \nabla \Q \cdot \nabla \Phi \,\text{d}\x+\ve\tilde{W}\int\limits_{\partial \mathcal{P}_\ve}(\Q_{\text{pref}}-\Q):\Phi\,\text{d}S_{\x}\nonumber\\
&& \hspace{80 pt} + \int\limits_{\Omega_{\ve}}\left[\tilde{a}\Q-\tilde{c}\Q\text{Tr}(\Q^2)\right]:\Phi\;\text{d}\x+
\int\limits_{\Omega_\ve}S(\nabla \tilde{\u},\Q):\Phi\,\text{d}\x\nonumber \\
&& \hspace{90pt}- \int\limits_{\Omega_\ve}(\tilde{\u}\cdot \nabla)\Q:\Phi\,\text{d}\x+\tilde{\zeta}\int\limits_{\Omega_\ve}\tilde{F}_{\text{ext}}:\Phi\,\text{d}\x=\int\limits_{\Omega_\ve}{\bf G}:\Phi\,\text{d}\x.\label{Q_prelim_weak_formulation}	 
\end{eqnarray}
Introduce $\Omega_1=\ve^{-1}\Omega_\ve$ and $\mathcal{P}_1=\ve^{-1}\mathcal{P}_\ve$. We now consider two-scale representation for the test function $\Phi$:
\begin{equation*}
\Phi (\x;\ve)=\bar{\Phi}(\x,\y) =\Phi^{(0)}(\x, \y) + \ve \Phi^{(1)}(\x, \y) + \cdots.
\end{equation*}
\noindent Rewrite the first integral in \eqref{Q_prelim_weak_formulation} in domain $\Omega_1$:
\begin{eqnarray}
&&\hspace{-25pt}-\gamma\ve\int\limits_{\Omega_\ve}\nabla \Q \cdot \nabla \Phi \,\text{d}\x = -\gamma\ve^{1+d}\int\limits_{\Omega_1}\nabla \Q(\ve \y)\cdot \nabla \Phi(\ve \y)  \,\text{d}\y\nonumber \\
&&\hspace{10pt}= -\gamma\ve^{1+d}\int\limits_{\Omega_1}\left[\nabla_{\x}\bar{\Q}(\ve \y,\y)+\ve^{-1}\nabla_{\y} \bar{\Q}(\ve \y,\y)\right]\cdot \left[\nabla_{\x} \bar{\Phi}(\ve \y,\y)+\ve^{-1}\nabla_{\y} \bar{\Phi}(\ve \y,\y)\right]  \,\text{d}\y\nonumber \\
&& \hspace{10pt}= -\Gamma K \ve^{1+d}\left\{\ve^{-2}\int\limits_{\Omega_1} \nabla_{\y}\Q^{(0)}\cdot \nabla_{\y} \Phi^{(0)}\,\text{d}\y+ 
\right.\nonumber \\
&&\hspace{15pt}\left.+\ve^{-1}\int\limits_{\Omega_1}\left[\nabla_{\x}\Q^{(0)}+\nabla_{\y}\Q^{(1)}\right]\cdot\nabla \Phi^{(0)}+\nabla \Q^{(0)}\cdot \left[\nabla_{\x}\Phi^{(0)}+\nabla_{\y}\Phi^{(1)}\right]\,\text{d}\y
+\cdots\right\}\hspace{-3pt}.
\end{eqnarray}

Expanding analogously other terms in \eqref{Q_prelim_weak_formulation} and using  that $\u^{(0)}=0$ we get at level $\ve^{d-1}$:
\begin{equation}
	 \int\limits_{{\Omega_1}} \nabla_\y \Q^{(0)}\cdot\nabla_\y\Phi^{(0)}\, \text{d}\y = 0, \nonumber
\end{equation}
which implies, together with periodicity in $\y$, that $\Q^{(0)}(\x, \y) = \Q^{(0)}(\x)$.

At level $\ve^{d}$, accounting for $\nabla_{\y}\Q^{(0)}=0$, we have
\begin{eqnarray}
	&&-\gamma\int_{{\Omega_1}} \left[\nabla_\x \Q^{(0)} + \nabla_\y \Q^{(1)}\right]\cdot \nabla_\y \Phi^{(0)}\, \text{d}\y +  \tilde{W} \int_{\partial {\P_1}} (\Q_{\text{pref}} - \Q^{(0)}):\Phi_0\, \text{d}S_\y \nonumber\\
		&&\hspace{2.5cm}+ \int\limits_{\Omega_1} \left[-\tilde{a}\Q^{(0)}+\tilde{c}\Q^{(0)}\text{Tr}((\Q^{(0)})^2)\right]:\Phi^{(0)}\, \text{d}\y \nonumber\\
		&& \hspace{2.5cm} + \int\limits_{\Omega_1} S(\nabla_\y \u^{(1)}, \Q^{(0)})\cdot\Phi^{(0)} \,\text{d}\y +  \tilde{\zeta}\int\limits_{\Omega_1}\tilde{F}_{\text{ext}}(\Q^{(0)},\Q_{\infty}):\Phi^{(0)}\,\text{d}\y \nonumber\\
		&& \hspace{2cm} = \int\limits_{\Omega_1} {\bf G}(\x): \Phi^{(0)}\, \text{d}\y. \label{Q_1_weak_form}
\end{eqnarray}
Note that the above integral relation is the weak formulation for the following boundary-value problem: 
\begin{eqnarray}
\left\{ \begin{array}{ll}
-\gamma\Delta_\y \Q^{(1)} = \boldsymbol{f}_1, &\y \text{ in } {\Pi_1}, \\ & \\
\gamma(\nabla_\y \Q^{(1)}) \cdot \boldsymbol{\nu}_\y = \boldsymbol{g}_1, & \y \text{ on } \partial \P_{1}.
\end{array}
\right.\label{eq:laplace_Q_1}
\end{eqnarray}
Here 
\begin{eqnarray}
\boldsymbol{f}_1&=&\gamma\Delta_{\y\x} \Q^{(0)}  - \tilde{a}\Q^{(0)} + \tilde{c}\Q^{(0)}\text{Tr}((\Q^{(0)})^2) + S(\nabla_\y \u^{(1)}, \Q^{(0)})+\tilde{\zeta}\tilde{F}_{\text{ext}}(\Q^{(0)},\Q_{\infty}) - {\bf G}(\x),\nonumber \\ 
\boldsymbol{g}_1&=&-\gamma\nabla_\x \Q^{(0)}\cdot \boldsymbol{\nu}_\y + \tilde{W}(\Q_{\text{pref}} - \Q^{(0)}),\nonumber
\end{eqnarray}
 and $\Delta_{\y\x}\boldsymbol{h}=\nabla_\y\cdot \nabla_{\x}\boldsymbol{h}$ for arbitrary $\boldsymbol{h}$.

Next, we have the solvability condition for \eqref{eq:laplace_Q_1} given as:
\begin{equation}
\int\limits_{\partial\mathcal{P}_1}\boldsymbol{g}_1\,\text{d}S_\y=\int\limits_{\Omega_1}\boldsymbol{f}_1\,\text{d}\y. \label{solvability-general}
\end{equation}
To evaluate the right-hand side, we use the fact that $\Q^{(0)}$ is 
independent of $\y$ and 
\begin{equation}
\int\limits_{\Omega_1}\nabla_{\y}\u\,\text{d}\y=\int\limits_{\partial \mathcal{P}_1}u_{\text{sq}}\boldsymbol{\tau}\boldsymbol{\nu}^{\text{T}}\,\text{d}S_\y. 
\end{equation}
Hence, we have
\begin{eqnarray}
\int\limits_{\Omega_1}\boldsymbol{f}_1\,\text{d}\y
&=&|\Omega_1|\left( - \tilde{a}\Q^{(0)} + \tilde{c}\Q^{(0)}\text{Tr}((\Q^{(0)})^2)+\tilde{\zeta}\tilde{F}_{\text{ext}}(\Q^{(0)},\Q_{\infty}) - {\bf G}(\x)\right)\nonumber \\
&&
+S(\int\limits_{\partial \mathcal{P}_1}u_{\text{sq}}\boldsymbol{\tau}\boldsymbol{\nu}^{\text{T}}\,\text{d}S_\y,\Q^{(0)}),\label{simplified_Q_1_1}\\
\int\limits_{\partial\mathcal{P}_1}\boldsymbol{g}_1\,\text{d}S_\y
&=&
\tilde{W}\int\limits_{\partial\mathcal{P}_1}\Q_{\text{pref}}\,\text{d}S_\y-\tilde{W}|\partial \mathcal{P}_1|\Q^{(0)}.\label{simplified_Q_1_2}
\end{eqnarray}

Substituting \eqref{simplified_Q_1_1} and \eqref{simplified_Q_1_2} into \eqref{solvability-general} we get the equation for $\Q^{(0)}$:
\begin{eqnarray}
&&\hspace{-20pt}-\left[\tilde{a}-\dfrac{\tilde{W}|\partial\mathcal{P}_1|}{|{\Omega_1}|}\right]\Q^{(0)}+\tilde{c}\Q^{(0)}\text{Tr}((\Q^{(0)})^2)\nonumber \\
&& \hspace{100pt}+\tilde{\zeta}\tilde{F}_{\text{ext}}(\Q^{(0)},\Q_{\infty})+S({\bf G}_{\text{sq}},\Q^{(0)})={\bf G}(x)-\dfrac{\tilde{W}}{|\Omega_1|}\overline{\Q}_{\text{pref}}.\label{eq_for_Q_h_appendix}
\end{eqnarray}
Here, we denote ${\bf G}_{\text{sq}}=\int\limits_{\partial \mathcal{P}_1}u_{\text{sq}}\boldsymbol{\tau}\boldsymbol{\nu}^{\text{T}}\,\text{d}S_\y$ and $\overline{\Q}_{\text{pref}}=\int\limits_{\partial\mathcal{P}_1}\Q_{\text{pref}}\,\text{d}S_\y$. The function $\Q^{(0)}$ is the limit of $\Q$ as $\ve \to 0$, thus, the algebraic equation \eqref{eq_for_Q_h_appendix} determines $\Q^{(h)}=\Q^{(0)}$.

\medskip 

\noindent{STEP 3.} {\it Find an equation for $\u^{(1)}$}. At level $\ve^0$ in the expansion of \eqref{eq:pre_homo_u_eq_re}, accounting for that $\u^{(0)}=0$, $\Q^{(0)}$ is independent of $\y$, and 
\begin{eqnarray*}
\ve^2\kappa\nabla\cdot (\nabla {\Q} \astrosun \nabla {\Q} + {\Q}\Delta {\Q} - \Delta {\Q} {\Q})=\ve^0\kappa \nabla_{\y}\cdot\left({\Q}^{(0)}\Delta_{\y} {\Q}^{(1)} - \Delta_{\y} {\Q}^{(1)} {\Q}^{(0)}\right) +\left[\begin{array}{c}\text{higher}\\\text{order}\\\text{terms}\end{array}\right]
\end{eqnarray*}
we get 
\begin{eqnarray}
&&\hspace{-35pt}-\tilde{\eta}\Delta_{\y}\u^{(1)}+\nabla_{\x}p^{(0)} +\nabla_{\y}p^{(1)}=\kappa \nabla_{\y}\cdot\left({\Q}^{(0)}\Delta_{\y} {\Q}^{(1)} - \Delta_{\y} {\Q}^{(1)} {\Q}^{(0)}\right)+{\bf F}(\x), \, \y\text{ in }\Omega_1,\label{eq_for_u_1}\\
&&\hspace{-35pt} \u^{(1)}=u_{\text{sq}}\boldsymbol{\tau}, \, \y\text{ on }\partial\mathcal{P}_1.
\end{eqnarray}

Next, we aim to remove $\Q^{(1)}$ from the right-hand side of \eqref{eq_for_u_1}. To this end, we notice that due to \eqref{eq:laplace_Q_1} we have that $-\gamma \Delta_{\y}\Q^{(1)}=\boldsymbol{f}_1$ and 
\begin{eqnarray}
&&\Delta_{\y\x}\Q^{(0)}=0\text{ and }
\Q^{(0)},\,\Q^{(0)}\text{Tr}(\Q^{(0)}),\,\tilde{\zeta}\tilde{F}_{\text{ext}}(\Q^{(0)},\Q_{\infty}),\,{\bf G}(\x)\text{ are independent of $\y$.}\nonumber
\end{eqnarray}
Thus, we can rewrite \eqref{eq_for_u_1} as 
\begin{eqnarray}
&&\hspace{-30pt}-\tilde{\eta}\Delta_{\y}\u^{(1)}+\nabla_{\x}p^{(0)} +\nabla_{\y}p^{(1)}\nonumber \\
&& \hspace{20pt}=-\kappa \gamma^{-1} \nabla_{\y}\cdot\left({\Q}^{(0)}S(\nabla_\y \u^{(1)}, \Q^{(0)}) - S(\nabla_\y \u^{(1)}, \Q^{(0)}) {\Q}^{(0)}\right)+{\bf F}(\x).
\end{eqnarray}
The above can be rewritten in component-wise form as 
\begin{equation}
\sum\limits_{m,j,l=1}^{d}\eta_{klmj}u_{m,jl}^{(1)}+\partial_{x_k}p^{(0)}+\partial_{y_k}p^{(1)}=F_{k}(\x), \quad k=1,..,d,\label{u1-component-wise}
\end{equation}
where 
\begin{eqnarray}
u_{m,jl}^{(1)}
&=&\dfrac{\partial^{2}u_{m}^{(1)}}{\partial y_{j}\partial y_{l}}\nonumber\\
\text{and}\quad
\eta_{klmj}&=&-\tilde{\eta}\delta_{km}\delta_{jl}+ \dfrac{\kappa\xi}{2\gamma}\sum\limits_{n=1}^{d}\left[Q_{kn}^{(0)}Q_{nm}^{(0)}\delta_{jl}-Q_{mn}^{(0)}Q_{nl}^{(0)}\delta_{jk}-Q_{jn}^{(0)}Q_{nl}^{(0)}\delta_{km}\right]\nonumber\\
&&+\dfrac{\kappa\xi}{d\gamma}\left[Q_{km}^{(0)}\delta_{jl}-Q_{ml}^{(0)}\delta_{jk}-Q_{jl}^{(0)}\delta_{km}\right]+\dfrac{\kappa}{\gamma}\left[Q_{kj}^{(0)}Q_{ml}^{(0)}-Q_{km}^{(0)}Q_{jl}^{(0)}\right]\nonumber\\
&&+\dfrac{\kappa}{\gamma}\sum\limits_{n=1}^{d}\left[Q_{kn}^{(0)}Q_{nm}^{(0)}\delta_{jl}-Q_{mn}^{(0)}Q_{nl}^{(0)}\delta_{jk}+Q_{jn}^{(0)}Q_{nl}^{(0)}\delta_{km}\right].
\end{eqnarray}
Next, rewrite \eqref{u1-component-wise} in a vectorial form: 
\begin{equation}
 -\eta(\Q^{(0)})\nabla_\y^2 \u^{(1)} + \nabla_\y p^{(1)} = {\bf F}(\x) - \nabla_\x p^{(0)}. 
\end{equation}
Taking into account the boundary condition $\u^{(1)}(\x,\y)=\tilde{u}_{\text{sq}}\boldsymbol{\tau}$ on $\mathcal{P}_1$, we obtain the following representation for $\u^{(1)}$:
\begin{equation}
\u^{(1)}=\mathcal{A}_{\eta(\Q^{(0)})}(\y)\left[{\bf F}(\x)-\nabla_{\x}p^{(0)}\right]+\overline{\u}_{\text{sq}},
\end{equation}
where $\mathcal{A}_{\eta(\Q^{(0)})}(\y)$ is a $\y$-dependent $d\times d$ matrix such that $\u(\y)=\mathcal{A}_{\eta(\Q^{(0)})}(\y)\boldsymbol{e}_i$  ($\boldsymbol{e}_i$ is the $i$th basis vector) is the solution of the following cell problem:
\begin{equation}
\left\{
\begin{array}{l}
-\eta(\Q^{(0)})\nabla_\y^2 \u + \nabla_\y p = \boldsymbol{e}_i, \text{ in } \Omega_1,\\
\nabla\cdot \u=0, \\ 
\u = 0, \text{ on }\mathcal{P}_1, \\ 
\u \text{ is 2-periodic}.
\end{array}
\right.
\end{equation}
The term $\overline{\u}_{\text{sq}}$ is defined as the solution of 
\begin{equation}
\left\{
\begin{array}{l}
-\eta(\Q^{(0)})\nabla_\y^2 \overline{\u}_{\text{sq}} + \nabla_\y p = 0, \text{ in } \Omega_1,\\
\nabla\cdot \overline{\u}_{\text{sq}}=0, \\ 
\overline{\u}_{\text{sq}} = \tilde{u}_{\text{sq}}\boldsymbol{\tau}, \text{ on }\mathcal{P}_1, \\ 
\overline{\u}_{\text{sq}} \text{ is 2-periodic}.
\end{array}
\right.
\end{equation}
Finally, we define the homogenized function $\u^{(h)}$ by averaging $\u^{(1)}$ 
and using the fact that $\Q^{(h)}=\Q^{(0)}$:
\begin{eqnarray}
\u^{(h)} = \mathcal{B}_{\eta(\Q^{(h)})} \left[{\bf F}({\x}) - \nabla_\x p\right]  + \fint_{{\Omega}_1} \bar{\u}_{\text{sq}}\,\text{d}\y,
\label{u_represent_Darcy}
\end{eqnarray} 
where $\mathcal{B}_{\eta} = \fint_{\Omega_1} \mathcal{A}_{\eta}(y)\, \text{d}\y$ and the pressure $p$ can be found from the divergence-free condition $\nabla\cdot\u=0$. 

To conclude, we have derived a system of homogenized equations in the form 
of \eqref{eq_for_Q_h_appendix}, an algebraic equation for $\Q$ 
and \eqref{u_represent_Darcy} for $\u$ in the form
of Darcy's law.

\bigskip 

\bigskip 

\section{Concluding remarks}

In this work, we initiated the theoretical justification of the active microswimmer model, also known as a squirmer, in a liquid crystal. This model has been recently developed to explore a non-trivial response of the microswimmer to surrounding environment possessing a liquid crystalline structure. 
As computational studies \cite{LinWurStr2017,chi2020surface} clearly show that the squirmer eventually converges to an equilibrium, both time-dependent solutions and steady states are important and were in the focus of our work. In investigating well-posedness of the corresponding equations, we started with combining techniques from the two fields, the squirmer in Newtonian fluids and the Beris-Edwards model of liquid crystal. However, such a combination is not straightforward. As explained in Remark~\ref{remark:dissipation}, one of the main difficulties, besides that our model is complex and highly nonlinear, is that it is not dissipative: there is a permanent energy input (not necessarily constant) 
coming from the activity of the squirmer. It makes application of a priori energy bounds established for the Beris-Edwards model not possible here. Therefore, the considered model requires novel approaches for its analysis. 
For the steady state problem, using suitable offset functions, 
we first proved the existence of a steady state for a truncated system via 
Galerkin approximations and careful energy bounds using specific properties of the Beris-Edwards system (see, e.g., \eqref{a_priori_bp3}), 
and then extended the well-posedness from the truncated system to the original 
one using the $L^{\infty}$ result formulated in Lemma~\ref{L_inf_Q}. 
For the time-dependent problem, in order to exploit the contraction mapping 
principle, we considered higher regularity solutions (instead of weak ones) 
which allowed us to obtain all the necessary bounds including the one for 
integrals where activity of squirmer enters as well as the force and torque 
balances for the squirmer (see, e.g., \eqref{integral_balance_bound_1} \& \eqref{integral_balance_bound_2}). 
Periodic settings, in which we considered our model, allowed us to pose a question of homogenization limit which would be a model describing a colony of synchronously moving squirmers. We found a scaling which, on the one hand, is consistent with experimental data (see Appendix~\ref{sec:homo_rescaling}) and, on the other hand, allows for a non-trivial two-scale expansions so that the homogenized limit takes the form of Darcy's law perturbed by an algebraic expression for the liquid crystal order parameter (see equations \eqref{eq_for_Q_h_appendix} and \eqref{u_represent_Darcy}).     

Natural extensions of our work are:  
\begin{itemize}
	\item [{\it (i)}] {\it Stability analysis of steady states.} Namely, we would like to find conditions on parameters when a steady state corresponding to swimming either parallel or perpendicular to the liquid crystal is stable. This analytical result will be compared with the main observation from \cite{chi2020surface} on bifurcations with respect to anchoring strength parameter $W$. It would be also important to show that there is no steady state other than corresponding to swimming parallel or perpendicular to the liquid crystal.
	
	\item [{\it (ii)}] {\it Force-velocity relation for steady swimming.} Though in the squirmer's frame and periodic conditions, squirmer's velocity is not well-defined, we can consider the so-called superficial velocity \cite{zick1982speres} $\boldsymbol{V}=-\fint_{\Omega}\u\,\text{d}\x$, which can be understood as the velocity of the squirmer with respect to the surrounding flow, and show how it depends on propulsion force entering the problem via $u_{\text{sq}}$. Specifically, given the profile of the active slip velocity $u_{\text{sq}}$ (with all other physical parameters fixed), what is the resulting velocity $\boldsymbol{V}$? This question is related to the evaluation of the squirmer efficiency in Stokes fluid as a function of  $u_{\text{sq}}$ \cite{michelin2010efficiency,guo2021optimal}. 
	
	\item [{\it (iii)}] {\it Rigorous justification of the homogenization limit.} We plan to justify the two-scale limit formally derived in Section~\ref{sec:homo_expansion} and in more general stochastic settings based on techniques developed for Newtonian fluids \cite{duerinckx2021corrector, duerinckx2022effective, duerinckx2021continuum, duerinckx2020einstein, duerinckx2022sedimentation}. 
\end{itemize}

\section*{Acknowledgment}
\addcontentsline{toc}{section}{Acknowledgement}
The work of L. Berlyand was supported by NSF grant DMS-2005262 and the work of H.~Chi was partially supported by NSF grant DMS-2005262.


\bibliographystyle{ieeetr}
\bibliography{llc_analytics}

\bigskip 
 
\appendix
\noindent{\huge \bf Appendix}

\section{Proof of Lemma~\ref{lemma_existence_of_q_m}}
\label{sec:aux_estimate}

\begin{proposition}[A Poincar\'{e}-type estimate]
	There exists $c_P>0$ such that
	\begin{equation}
	\|\Q\|^{2}_{L^2(\Omega)}\leq c_P \left(\dfrac{K}{2}\|\nabla \Q\|^2_{L^2(\Omega)} + \dfrac{W}{2}\|\Q\|^2_{L^2(\partial \P_{\text{st}})}\right). \label{poincare}
	\end{equation}
	for all $\Q\in H^1_{\text{per}}(\Omega)$.
\end{proposition}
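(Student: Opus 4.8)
The plan is to first reduce \eqref{poincare} to a Poincar\'e inequality with no coefficients and then restore the weights $K,W$. Since $K,W>0$ are fixed, once we have $\|\Q\|_{L^2(\Omega)}^2 \le C\big(\|\nabla\Q\|_{L^2(\Omega)}^2 + \|\Q\|_{L^2(\partial\mathcal{P}_{\text{st}})}^2\big)$ for some $C>0$ and all $\Q\in H^1_{\text{per}}(\Omega)$, the claimed bound follows with $c_P = C\max\{2/K,\,2/W\}$ via the trivial estimate $\|\nabla\Q\|_{L^2(\Omega)}^2 + \|\Q\|_{L^2(\partial\mathcal{P}_{\text{st}})}^2 \le \max\{2/K,2/W\}\big(\tfrac K2\|\nabla\Q\|_{L^2(\Omega)}^2 + \tfrac W2\|\Q\|_{L^2(\partial\mathcal{P}_{\text{st}})}^2\big)$.

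To prove the coefficient-free inequality I would argue by contradiction and compactness. Regard elements of $H^1_{\text{per}}(\Omega)$ as $H^1$ functions on the compact Lipschitz domain $\mathcal{M} := \mathbb{T}^d_L \setminus \mathcal{P}_{\text{st}}$ obtained by identifying opposite faces of $\Pi$; note that $\mathcal{M}$ is connected because $\overline{\mathcal{P}_{\text{st}}}\subset\Pi$. Suppose the inequality fails for every $C$. Then there is a sequence $\Q_n \in H^1_{\text{per}}(\Omega)$ with $\|\Q_n\|_{L^2(\Omega)} = 1$ and $\|\nabla\Q_n\|_{L^2(\Omega)}^2 + \|\Q_n\|_{L^2(\partial\mathcal{P}_{\text{st}})}^2 \to 0$. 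In particular $\{\Q_n\}$ is bounded in $H^1(\mathcal{M})$, so after passing to a subsequence we have $\Q_n \rightharpoonup \Q$ in $H^1(\mathcal{M})$, $\Q_n \to \Q$ strongly in $L^2(\mathcal{M})$ by Rellich--Kondrachov, and $\Q_n|_{\partial\mathcal{P}_{\text{st}}} \to \Q|_{\partial\mathcal{P}_{\text{st}}}$ strongly in $L^2(\partial\mathcal{P}_{\text{st}})$ since the trace map $H^1(\mathcal{M}) \to L^2(\partial\mathcal{P}_{\text{st}})$ is compact (it factors through $H^{1/2}(\partial\mathcal{P}_{\text{st}}) \hookrightarrow\hookrightarrow L^2(\partial\mathcal{P}_{\text{st}})$). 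The strong $L^2$ convergence gives $\|\Q\|_{L^2(\Omega)} = 1$; weak lower semicontinuity of the Dirichlet energy gives $\|\nabla\Q\|_{L^2(\Omega)} = 0$, so $\Q$ is constant on the connected set $\mathcal{M}$; and the trace convergence gives $\|\Q\|_{L^2(\partial\mathcal{P}_{\text{st}})} = 0$. A constant whose trace on $\partial\mathcal{P}_{\text{st}}$ vanishes must equal $0$, contradicting $\|\Q\|_{L^2(\Omega)} = 1$.

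Alternatively, one can give a direct proof with an explicit constant: let $\bar\Q = \frac{1}{|\Omega|}\int_\Omega \Q\,\text{d}\x$; the Poincar\'e--Wirtinger inequality on $\mathcal{M}$ yields $\|\Q - \bar\Q\|_{L^2(\Omega)} \le C_1\|\nabla\Q\|_{L^2(\Omega)}$, and the trace theorem gives $\|\Q - \bar\Q\|_{L^2(\partial\mathcal{P}_{\text{st}})} \le C_2\|\Q-\bar\Q\|_{H^1(\Omega)} \le C_3\|\nabla\Q\|_{L^2(\Omega)}$; hence $|\bar\Q|\,|\partial\mathcal{P}_{\text{st}}|^{1/2} \le \|\Q\|_{L^2(\partial\mathcal{P}_{\text{st}})} + C_3\|\nabla\Q\|_{L^2(\Omega)}$, and combining with $\|\Q\|_{L^2(\Omega)} \le \|\Q-\bar\Q\|_{L^2(\Omega)} + |\bar\Q|\,|\Omega|^{1/2}$ produces the estimate with a constant depending only on $\Omega$, and then on $K,W$ as above.

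The main obstacle is not conceptual but the bookkeeping about the function space: one must confirm that $H^1_{\text{per}}(\Omega)$ genuinely identifies with $H^1$ of the compact manifold-with-boundary $\mathcal{M}$, so that Rellich--Kondrachov and compactness of the boundary trace apply, and that $\mathcal{M}$ is connected so that $\nabla\Q = 0$ forces $\Q$ to be a single global constant rather than merely locally constant. This is exactly where the standing assumption $\overline{\mathcal{P}_{\text{st}}}\subset\Pi$ and the regularity of $\partial\mathcal{P}_{\text{st}}$ are used; everything else is standard.
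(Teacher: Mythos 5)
Your proposal is correct and follows essentially the same route as the paper: a contradiction--compactness argument (Rellich plus compactness of the trace) for a coefficient-free Poincar\'e inequality, followed by restoring the weights, with your constant $C\max\{2/K,2/W\}$ matching the paper's $2\hat{c}_P/\min\{K,W\}$; the only cosmetic difference is that you identify the limit via weak lower semicontinuity of the Dirichlet norm and the compact trace, whereas the paper tests against periodic test functions and integrates by parts. Your secondary direct argument via Poincar\'e--Wirtinger and the trace theorem is a valid alternative that additionally yields an explicit constant, but it is not needed for the statement as claimed.
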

\begin{proof}
Here, we first show that there exists $\hat{c}_P>0$ such that for arbitrary $\Q\in H^1_{\text{per}}(\Omega)$ the following inequality holds
\begin{equation}\label{poincare_without_coefs}
\|\Q\|^2_{L^2(\Omega)}\leq \hat{c}_P(\|\nabla \Q\|_{L^2(\Omega)}^2+\|\Q\|^2_{L^2(\mathcal{P}_{\text{st}})}).
\end{equation}
By contradiction, we assume that there exists a sequence $\{\Q_n\}_{n=1}^{\infty}$: 
\begin{equation}
\|\Q_n\|^2_{L^2(\Omega)}=1 \text{ and }\|\nabla \Q_n\|_{L^2(\Omega)}^2+\|\Q_{n}\|^2_{L^2(\mathcal{P}_{\text{st}})}\leq \dfrac{1}{n}.\label{contradiction-assumption}
\end{equation} 
From boundedness of $\Q_n$ in $L^2(\Omega)$ it follows that the sequence $\{\Q_n\}_{n=1}^{\infty}$ possesses a weakly converging sub-sequence in $L^2(\Omega)$. Consider any such weakly converging sub-sequence $\{\Q_{n_k}\}_{k=1}^{\infty}$, $\Q_{n_k}\rightharpoonup \Q^{*}$ and a function $\psi\in H^1_{\text{per}}(\Omega)$:
\begin{equation*}
\int\limits_{\Omega}\Q_{n_k}\partial_{x_i}\psi\,\text{d}x=
\int\limits_{\partial\mathcal{P}_{\text{st}}}\Q_{n_k}\psi\nu_i\,\text{d}S_x-\int\limits_{\Omega}\partial_{x_i}\Q_{n_k}\psi\,\text{d}x \to 0,\quad 1\leq i\leq d. 
\end{equation*} 
The convergence to $0$ follows from strong convergences of $\Q_{n_k}$ in $L^2(\partial\mathcal{P}_{\text{st}})$ and $\nabla\Q_{n_k}$ in $L^2(\Omega)$ which in turn follow from \eqref{contradiction-assumption}. Then 
\begin{equation*}
\int\limits_{\Omega}\Q^{*}\partial_{x_i}\psi\,\text{d}x=0 \quad \forall \psi \in H^1_{\text{per}}(\Omega).
\end{equation*}
Using integration by parts we get 
\begin{equation*}
\int\limits_{\partial\mathcal{P}_{\text{st}}}\Q^{*}\psi\nu_i\,\text{d}S_x-\int\limits_{\Omega}\partial_{x_i}\Q^{*}\psi\,\text{d}x=0\quad \forall \psi \in H^1_{\text{per}}(\Omega).
\end{equation*} 
By taking first $\psi \in C^{\infty}_{c}(\Omega)$ we get $\partial_{x_i}\Q^{*}\equiv 0$ so the second integral in the equality above vanishes. Next, we get that the first integral is zero as well by taking $\psi$ with various non-zero traces. We conclude $\Q^{*}\equiv 0$. Moreover, since for any weakly converging sub-sequence the limit is $0$, then entire sequence $\{\Q_n\}_{n=1}^{\infty}$ weakly converges to $0$. 

Note that $H^1_{\text{per}}(\Omega)\subset H^1(\Omega)$ and thus $H^1_{\text{per}}(\Omega)$ is compactly embedded in $L^2(\Omega)$. Hence, $\Q_{n}$ is strongly converging in $L^2(\Omega)$ and since the weak limit is $0$, we conclude $\Q_n\to 0$ strongly in $L^2(\Omega)$. However, it contradicts to \eqref{contradiction-assumption} since it implies that if $\Q_n\to \Q^*$ strongly in $L^2(\Omega)$, then $\|\Q^*\|_{L^2(\Omega)}^2=1$. Thus, inequality \eqref{poincare_without_coefs} is shown.

Finally, to prove \eqref{poincare} take $c_P=\dfrac{2\hat{c}_P}{\min\{K,W\}}$.
\end{proof}

\smallskip 
 
Next we turn to the proof of Lemma~\ref{lemma_existence_of_q_m}. We note that an equivalent way to define $\Q_m$ from \eqref{aux_elliptic} is via minimization of the following energy functional:
\begin{equation}
\mathcal{E}(\Q)=\int\limits_{\Omega}\dfrac{K}{2}|\nabla \Q|^2 + \hat{\mathcal{F}}_M(\Q)\,\text{d}x+\dfrac{W}{2}\int\limits_{\partial \mathcal{P}_{\text{st}}}|\Q_{\text{pref}}-\Q|^2 \,\text{d}S_x + \int\limits_{\Omega} \H_m: \Q \,\text{d}x
\end{equation} 
among functions $\Q\in H^1_{\text{per}}(\Omega)$. The minimizer $\Q$ of the energy functional $\mathcal{E}(\Q)$ exists.  

From \eqref{poincare}, the Cauchy inequality, and $\|\Q-\Q_{\text{pref}}\|^2\geq \dfrac{1}{2}\|\Q\|^2-\|\Q_{\text{pref}}\|^2$ we obtain that $\mathcal{E}(\Q)$ is bounded from below:
\begin{eqnarray}
\mathcal{E}(\Q)&\geq& \dfrac{K}{2}\|\nabla \Q\|^2_{L^2(\Omega)} + \dfrac{W}{2}\|\Q_{\text{pref}}-\Q\|^2_{L^2(\partial \mathcal{P}_{\text{st}})}-\dfrac{1}{2c_P}\|\Q\|^2_{L^2(\Omega)}-\dfrac{c_P}{2}\|\H_m\|^2_{L^2(\Omega)}\nonumber
\\
&\geq& -\dfrac{W}{2}\|\Q_{\text{pref}}\|_{L^2(\partial \mathcal{P}_{\text{st}})}-\dfrac{c_P}{2}\|\H_m\|^2_{L^2(\Omega)}.\nonumber
\end{eqnarray} 
Thus, there exists a minimizing sequence $\Q^{(\ell)}$ weakly converging in $H^1_{\text{per}}(\Omega)$. Then using the $\liminf$ property of a weakly converging sequences we get the existence of minimizer $\Q=\Q_m$.  

From $\mathcal{E}(\Q_m)\leq \mathcal{E}(\Q_\infty)$ and \eqref{poincare} we get that there exists $C>0$ such that 
\begin{eqnarray}
\dfrac{K}{8}\|\nabla \Q_m\|^2_{L^2(\Omega)} +\dfrac{1}{8c_P}\|\Q_m\|_{L^2(\Omega)}^2+\dfrac{W}{4}\|\Q_{\text{pref}}-\Q_m\|^2_{L^2(\partial \P_{\text{st}})}\leq 3c_P\|\H_m\|^2_{L^2(\Omega)}+C.
\label{gradient_estimate}
\end{eqnarray}
This shows \eqref{lemma32_H1}. Next, due to the elliptic regularity result (see Appendix~\ref{sec:elliptic})
\begin{equation}
\left\{
\begin{array}{l}
\Delta \Q= K^{-1}(\H_m-\hat{\H}_M(\Q_m))\text{ in }\Omega, \\
\partial_\nu \Q|_{\partial \P_{\text{st}}}=WK^{-1}(\Q_{\text{pref}}-\Q)
\end{array}
\right.
\nonumber
\end{equation}
we have 
\begin{equation}
\quad \|\Q\|_{H^2(\Omega)}^2\leq C \left(\dfrac{(W+K)^2}{W^2}\|\H_m-\hat{\H}_M(\Q_m)\|_{L^2(\Omega)}^2+\dfrac{W+K}{K}\|\Q_{\text{pref}}\|_{C^1}^2\right).
\end{equation}
Next, using \eqref{max_bound_on_H_M} we get \eqref{lemma32_H2} and it completes the proof of Lemma~\ref{lemma_existence_of_q_m}.

\section{Elliptic regularity for the squirmer boundary conditions}
\label{sec:elliptic}
Here, we consider the following auxiliary elliptic problem: 
\begin{equation}
\left\{
\begin{array}{l}
\Delta q= F\text{ in }\Omega, \\
\partial_\nu q|_{\partial \mathcal{P}_{\text{st}}}=\beta(\gamma(\x) -q),\\
q \text{ is  }\Pi\text{-periodic.}
\end{array}
\right.\label{aux_elliptic_problem}
\end{equation}
Here $F=F(\x)$, $\gamma=\gamma(\x)$ and $\beta>0$ are given. 
We aim to prove the elliptic regularity for \eqref{aux_elliptic_problem}:

\begin{thm}\it 
Let $q$ be the solution of \eqref{aux_elliptic_problem}. Then  
\begin{equation}
\|q\|_{H^2(\Omega)}^2\leq C \left(\frac{(1+\beta)^2}{\beta^2}\|F\|_{L^2(\Omega)}^2+ (1+\beta)\|\gamma\|_{C^1}^2\right) \label{classic_regularity}
\end{equation}
for some constant $C$ independent of $\beta$ and $\gamma$. 
\end{thm}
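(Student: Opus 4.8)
The plan is to treat \eqref{aux_elliptic_problem} as a Poisson problem with a Robin-type boundary condition on $\partial\mathcal{P}_{\text{st}}$ and periodic boundary conditions on $\partial\Pi$, and to obtain the $H^2$-bound by combining the standard interior/boundary elliptic estimates with a careful tracking of how the constants depend on the anchoring parameter $\beta$. First I would establish the basic energy estimate: testing the equation against $q$ itself and integrating by parts gives
\begin{equation*}
\|\nabla q\|_{L^2(\Omega)}^2 + \beta\|q\|_{L^2(\partial\mathcal{P}_{\text{st}})}^2 = -\int_\Omega F q\,\text{d}\x + \beta\int_{\partial\mathcal{P}_{\text{st}}}\gamma q\,\text{d}S_\x,
\end{equation*}
and then using the Poincaré-type inequality \eqref{poincare} (with $K=W=1$, or rather its coefficient-free version \eqref{poincare_without_coefs} adapted to the present boundary term) together with Cauchy--Schwarz and a trace estimate to absorb terms. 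This yields control of $\|q\|_{H^1(\Omega)}^2$ and $\beta\|q\|_{L^2(\partial\mathcal{P}_{\text{st}})}^2$ by $\frac{(1+\beta)^2}{\beta^2}\|F\|_{L^2(\Omega)}^2 + (1+\beta)\|\gamma\|_{L^2(\partial\mathcal{P}_{\text{st}})}^2$, with the $\beta$-powers chosen so as to match \eqref{classic_regularity}; the $\frac{1}{\beta}$ loss comes from Young's inequality applied to the $\beta\int\gamma q$ term on the boundary when one wants to leave room to absorb $\beta\|q\|^2_{L^2(\partial\mathcal{P}_{\text{st}})}$.

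Next I would upgrade to $H^2$. The Robin condition $\partial_\nu q = \beta(\gamma-q)$ on $\partial\mathcal{P}_{\text{st}}$ can be viewed as an inhomogeneous Neumann condition $\partial_\nu q = h$ with $h := \beta(\gamma - q)\in H^{1/2}(\partial\mathcal{P}_{\text{st}})$, since $q|_{\partial\mathcal{P}_{\text{st}}}\in H^{1/2}$ by trace and $\gamma\in C^1\subset H^{1/2}$. The classical elliptic regularity estimate for the Neumann problem (interior smoothness of $\partial\mathcal{P}_{\text{st}}$, periodic side handled by reflection/standard periodic elliptic theory) gives
\begin{equation*}
\|q\|_{H^2(\Omega)} \leq C\left(\|F\|_{L^2(\Omega)} + \|h\|_{H^{1/2}(\partial\mathcal{P}_{\text{st}})} + \|q\|_{H^1(\Omega)}\right),
\end{equation*}
with $C$ independent of $\beta$. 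Then $\|h\|_{H^{1/2}(\partial\mathcal{P}_{\text{st}})}\leq \beta\|\gamma\|_{H^{1/2}} + \beta\|q\|_{H^{1/2}(\partial\mathcal{P}_{\text{st}})}$, and the troublesome term $\beta\|q\|_{H^{1/2}(\partial\mathcal{P}_{\text{st}})}$ must be controlled. Here I would use the trace inequality $\|q\|_{H^{1/2}(\partial\mathcal{P}_{\text{st}})}\leq C\|q\|_{H^1(\Omega)}$ combined with the interpolation/small-constant trick $\|q\|_{H^1(\Omega)}\leq \varepsilon\|q\|_{H^2(\Omega)} + C_\varepsilon\|q\|_{L^2(\Omega)}$ — but since $\beta$ appears in front, a plain absorption into $\|q\|_{H^2}$ would not be uniform in $\beta$. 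Instead I would feed the $H^1$-bound from the first step, which already carries the correct $\beta$-weights, directly into the right-hand side, so that $\beta\|q\|_{H^{1/2}}\leq C\beta\|q\|_{H^1(\Omega)}\leq C\big(\tfrac{1+\beta}{\beta}\|F\|_{L^2} + (1+\beta)^{1/2}\beta\|\gamma\|_{...}\big)$, and then check that squaring and collecting all contributions stays within the bound $\frac{(1+\beta)^2}{\beta^2}\|F\|_{L^2(\Omega)}^2 + (1+\beta)\|\gamma\|_{C^1}^2$ claimed in \eqref{classic_regularity}; the $C^1$-norm of $\gamma$ (rather than $H^{1/2}$) gives ample room for these boundary-term manipulations.

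The main obstacle I anticipate is precisely the bookkeeping of the $\beta$-dependence: one has to be careful that every application of Young's inequality distributes powers of $\beta$ consistently, since a naive estimate produces either a $\beta^2$ or a $\beta^3$ loss rather than the stated $(1+\beta)^2/\beta^2$ and $(1+\beta)$. The resolution is to do the energy estimate first with the sharp weights and then never re-estimate a boundary trace of $q$ by $H^2$ — always route it through the already-established weighted $H^1$ bound. A secondary technical point is justifying the elliptic regularity estimate with a $\beta$-independent constant for the mixed periodic/curved-boundary geometry; this is standard (localize near $\partial\mathcal{P}_{\text{st}}$ and flatten, use periodicity away from it) and I would simply cite the relevant elliptic theory, treating the Robin term as data rather than as part of the operator so that $\beta$ never enters the elliptic constant.
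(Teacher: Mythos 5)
Your first step (the weighted energy estimate giving $\|q\|_{H^1(\Omega)}^2+\beta\|q\|_{L^2(\partial\mathcal{P}_{\text{st}})}^2\lesssim \tfrac{(1+\beta)^2}{\beta^2}\|F\|_{L^2}^2+(1+\beta)\|\gamma\|^2$) is fine and is essentially equivalent to what the paper does via the variational characterization together with \eqref{poincare_without_coefs}. The gap is in the $H^2$ upgrade. By recasting the Robin condition as inhomogeneous Neumann data $h=\beta(\gamma-q)$ and invoking a $\beta$-independent Neumann estimate, you are forced to pay a factor $\beta$ on every boundary trace: even routing $\beta\|q\|_{H^{1/2}(\partial\mathcal{P}_{\text{st}})}$ through the weighted $H^1$ bound gives $\beta\|q\|_{H^1(\Omega)}\lesssim (1+\beta)\|F\|_{L^2}+\beta(1+\beta)^{1/2}\|\gamma\|$ (note your displayed intermediate bound drops a factor $\beta$ on the $F$-term), and together with $\beta\|\gamma\|_{H^{1/2}}$ this yields, after squaring, a bound of the form $(1+\beta)^2\|F\|_{L^2}^2+\beta^2(1+\beta)\|\gamma\|^2$. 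That is not dominated by $\tfrac{(1+\beta)^2}{\beta^2}\|F\|_{L^2}^2+(1+\beta)\|\gamma\|_{C^1}^2$ with a $\beta$-independent constant: it fails as $\beta\to\infty$ (the Dirichlet-anchoring limit), which is precisely the regime the theorem must cover, since the application in Lemma~\ref{lemma_existence_of_q_m} uses $\beta=W/K$ with no upper bound on $W/K$. So "ample room" is not there; the bookkeeping genuinely does not close.

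The missing idea is that the zeroth-order boundary term $-\beta q$ must be kept inside the bilinear form at the level of the second-derivative estimate, where its sign is favorable, rather than moved to the data. The paper's proof flattens the boundary and runs a tangential difference-quotient argument: testing with $D_1^{-h}(\zeta^2D_1^h q)$, the Robin term produces the coercive contribution $-\tfrac{\beta_0}{2}\int\zeta^2|D_1^hq|^2\,\text{d}y_1$ (see \eqref{aux_est_2}), so large $\beta$ only helps, and $\beta$ enters the right-hand side solely through the tangential derivative of $\hat\beta\hat\gamma$, i.e.\ through $\beta\|\gamma\|_{C^1}$ (which, after a Young inequality weighted by the coercive boundary term, is compatible with the stated $(1+\beta)\|\gamma\|_{C^1}^2$); the normal second derivative is then recovered from the equation, and the interior is handled by standard theory. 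If you want to salvage your route, you would need some substitute for this mechanism (e.g.\ a Robin-to-Dirichlet argument uniform for large $\beta$); as written, treating the Robin term as Neumann data cannot give the claimed $\beta$-uniform constant.
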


This result is well-known from PDE textbooks \cite[Theorem 8.12]{GilTru1983} and \cite[Theorem 4 in \S 6.3.2]{evans1998pde} for the Dirichlet boundary conditions. However, we need to re-visit this result due to our specific boundary conditions for which the afore-mentioned results are not applicable. For the sake of clarity, the proof below is written for two-dimensional case, $d=2$. 

\medskip 

\begin{proof}
We first address a priori estimates for regions near the boundary $\partial \mathcal{P}_{\text{st}}$ of the squirmer. Choose any point $\x_0$ on $\partial \mathcal{P}_{\text{st}}$. Suppose its vicinity on the boundary can be described by equation $x_2=\varphi(x_1)$ so the domain $x_2>\varphi(x_1)$ is the interior of domain $\Omega$. Then introduce change of variables 
\begin{equation}
\y=\Phi(\x) \Leftrightarrow 
\left\{
\begin{array}{l}
y_1=x_1,\\
y_2=x_2-\varphi(x_1).
\end{array}\right.\label{change_of_variables_y_x}
\end{equation} 
In variable $\y$, the problem \eqref{aux_elliptic_problem} has the form:
\begin{equation*}
\left\{
\begin{array}{l}
\nabla_{\y}\cdot (L(\y)\nabla_{\y}q)=F,\quad y_2>0\\
L(\y)\nabla_{\y}q\cdot \nu_\y=\beta\sqrt{1+(\varphi')^2}\,(\gamma- q),\quad y_2=0,
\end{array}
\right.
\end{equation*}
where
\begin{equation*}
L=
\left[
\begin{array}{cc}
1 & -\varphi' \\
-\varphi' & (1+(\varphi')^2)
\end{array}
\right], \quad
\nu_\y=
\left[
\begin{array}{c}
0\\1
\end{array}
\right].
\end{equation*}
We note that $L$ is a positive definite symmetric matrix with the smallest eigenvalue
\begin{equation}
\lambda_{\text{min}}(\y)=\dfrac{1}{2}\left(2+(\varphi')^2-\sqrt{(2+(\varphi')^2)^2-4}\right)\geq 1.
\end{equation}
Thus, $L$ is uniformly positive definite: 
\begin{equation}
\text{$(L(\y)\u\cdot \u)\geq |\u|^2$ for all $\y,\u\in \mathbb R^2$.}
\label{uniform_positivity}
\end{equation} 
\begin{lemma} 
	\it Let $q$ be the solution of 
	\begin{equation}
	\left\{
	\begin{array}{l}
	\nabla_\y \cdot (L(\y)\nabla_\y q) = \hat{F}, \quad \y\in \mathbb R_+^2=\left\{(y_1,y_2):y_2>0\right\}\\
	L(\y)\partial_{\y}q\cdot \nu_\y=\hat{\beta}(\hat{\gamma}- q), \quad y_2=0, 
	\end{array}
	\right.
	\label{aux_problem_weak}
	\end{equation}
	for some $f\in L^2_{loc}(\mathbb R_{+}^2)$, $\hat{\gamma}= \hat{\gamma}(y_1)\in H^1(\mathbb R)$ and $\hat{\beta}(y_1)\geq\beta_0>0$ and matrix $L(\y)$ satisfying uniform positivity condition \eqref{uniform_positivity}. Denote also $U=B_1(0)\cap\left\{y_2>0\right\}$ and $V=B_{1/2}(0)\cap\left\{y_2>0\right\}$. 
	
	Then we have the following bound: 
	\begin{equation}
	\|q\|_{H^2(V)}^2\leq C\left(\|\hat{F}\|_{L^2(U)}^2+\|q\|_{H^1(U)}^2+\|q\|_{L^2(U_0)}^2+\beta^2 \|\gamma\|_{C^1}^2\right).
	\label{ractified_regularity}
	\end{equation}
	Here $U_0=\{y_2=0\}\cap U$.
\end{lemma}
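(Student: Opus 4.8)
The statement is the standard interior-near-boundary $H^2$-estimate for a second-order divergence-form elliptic equation with an oblique/Robin-type boundary condition, flattened to the half-ball. I would prove it by the classical difference-quotient method in the tangential direction, followed by recovering the normal second derivative from the equation. First I would fix a cutoff $\zeta\in C_c^\infty(B_1(0))$ with $\zeta\equiv 1$ on $B_{1/2}(0)$ and $0\le\zeta\le1$, and work with the weak formulation of \eqref{aux_problem_weak}: for all $v\in H^1(U)$ vanishing near the curved part of $\partial U$,
\[
\int_U L(\y)\nabla_\y q\cdot\nabla_\y v\,\text{d}\y + \int_{U_0}\hat\beta\, q\, v\,\text{d}S = -\int_U \hat F\, v\,\text{d}\y + \int_{U_0}\hat\beta\,\hat\gamma\, v\,\text{d}S .
\]
Here I use that $L$ is symmetric and, by \eqref{uniform_positivity}, uniformly positive definite, and that $L$ and $\varphi'$ are smooth and bounded on $U$ (since $\partial\mathcal P_{\text{st}}$ is smooth).

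Next I would introduce the tangential difference quotient $D_h^1 w(\y)=h^{-1}\big(w(y_1+h,y_2)-w(y_1,y_2)\big)$, which preserves the condition $y_2>0$, and test the weak formulation with $v=-D_h^{-1}(\zeta^2 D_h^1 q)$. Moving the difference quotient onto the other factor (discrete integration by parts) produces, on the left, the coercive term $\int_U L\,\nabla(\zeta D_h^1 q)\cdot\nabla(\zeta D_h^1 q)\,\text{d}\y \ge c\|\nabla(\zeta D_h^1 q)\|_{L^2(U)}^2$ plus the nonnegative boundary term $\int_{U_0}\hat\beta\,\zeta^2|D_h^1 q|^2\,\text{d}S$, minus commutator terms involving $(D_h^1 L)\nabla q$ and $\nabla\zeta$, which are bounded by $\varepsilon\|\nabla(\zeta D_h^1 q)\|_{L^2(U)}^2 + C_\varepsilon\|q\|_{H^1(U)}^2$. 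On the right, the $\hat F$ term is controlled by $\varepsilon\|\nabla(\zeta D_h^1 q)\|_{L^2(U)}^2+C_\varepsilon\|\hat F\|_{L^2(U)}^2$ after another discrete integration by parts, and the boundary term $\int_{U_0}\hat\beta\,\hat\gamma\,D_h^{-1}(\zeta^2 D_h^1 q)\,\text{d}S$ is estimated using $D_h^{-1}$ of the trace: since $\hat\gamma\in C^1$ one bounds it by $\varepsilon$ times the $H^1(U)$-norm of $\zeta D_h^1 q$ plus $C_\varepsilon\hat\beta^2\|\hat\gamma\|_{C^1}^2$, after using the trace inequality $\|w\|_{L^2(U_0)}\le C\|w\|_{H^1(U)}$ and the bound $\|D_h^{-1}g\|_{L^2(U_0)}\le C\|g_{y_1}\|_{L^2(U_0)}$. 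Absorbing the $\varepsilon$-terms gives $\sup_h\|\nabla(\zeta D_h^1 q)\|_{L^2(U)}\le C(\|\hat F\|_{L^2(U)}+\|q\|_{H^1(U)}+\hat\beta\|\hat\gamma\|_{C^1})$, hence $\partial_{y_1}\nabla q\in L^2(V)$ with the same bound, i.e. $q_{y_1y_1},q_{y_1y_2}\in L^2(V)$.

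Finally I would recover $q_{y_2y_2}$ algebraically from the PDE written out: expanding $\nabla_\y\cdot(L\nabla_\y q)=\hat F$ and isolating the $L_{22}q_{y_2y_2}=(1+(\varphi')^2)q_{y_2y_2}$ term, all other terms are either $\hat F$, first derivatives of $q$ times smooth bounded coefficients, or the mixed/tangential second derivatives just controlled; since $1+(\varphi')^2\ge1$ this yields $\|q_{y_2y_2}\|_{L^2(V)}$ with the desired bound. Collecting the three second derivatives gives \eqref{ractified_regularity}. (The term $\|q\|_{L^2(U_0)}^2$ on the right-hand side of \eqref{ractified_regularity} is in fact absorbed by $\|q\|_{H^1(U)}^2$ via the trace theorem, so it is harmless to keep it.)

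\textbf{Main obstacle.} The delicate point is the boundary term coming from the Robin condition: one must differentiate (via difference quotients) a condition that contains the unknown $q$ itself on $U_0$, and ensure the resulting boundary term has a \emph{favorable sign} (this is where $\hat\beta\ge\beta_0>0$ and the coercivity of $L$ on the boundary are used) while the inhomogeneity $\hat\gamma$ contributes only a controlled lower-order error. Keeping explicit track of the $\hat\beta$-dependence — so that the final constant matches the $\frac{(1+\beta)^2}{\beta^2}$ and $(1+\beta)$ weights in \eqref{classic_regularity} after the covering/partition-of-unity argument is applied back in the $\x$-variables — requires care but is routine once the sign structure is in place. The interior estimate away from $\partial\mathcal P_{\text{st}}$ and the patching over a finite cover of $\partial\mathcal P_{\text{st}}$ together with the standard periodic interior estimate are entirely classical and I would only sketch them.
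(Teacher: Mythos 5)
Your proposal is correct and follows essentially the same route as the paper: a cutoff plus tangential difference quotients in the weak formulation, exploiting the sign of the Robin term (via $\hat\beta\geq\beta_0>0$) and the uniform positivity of $L$, with the tangential variation of $\hat\beta\hat\gamma$ (and of $\hat\beta$ against $q$, which is where the $\|q\|_{L^2(U_0)}^2$ term arises) producing the lower-order boundary contributions, and finally recovering $\partial_{y_2}^2 q$ algebraically from the equation using $L_{22}=1+(\varphi')^2\geq 1$. No substantive differences from the paper's argument.
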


\begin{proof}
	We adapt arguments from \cite{evans1998pde}. All gradients $\nabla$ in the proof of this lemma are taken with respect to variable $\y$. First, we write the weak formulation of \eqref{aux_problem_weak} for all $v\in H^1(U)$:
	\begin{equation}
	\int\limits_{\tilde{\Pi}}L\nabla q \cdot \nabla v \,\text{d}y-\int\limits_{\{y_2=0\}\cap \tilde{\Pi}}\hat{\beta}(\hat{\gamma} -  q)v\,\text{d}S_y=-\int\limits_{\tilde{\Pi}} \hat{F}v\,\text{d}y.\label{weak_aux_problem_weak}
	\end{equation}
	Here $\tilde{\Pi}$ is the image of $\Pi\setminus \mathcal{P}_{\text{st}}$ under transformation \eqref{change_of_variables_y_x}. Next we introduce $\zeta(\y)$ such that $\zeta \in C^{\infty}$ and $\zeta\equiv 1$ in $V$ and $\zeta \equiv 0$ outside of $W=B_{{3/4}}(0)\cap\left\{y_2>0\right\}$. Take test function $v=D_{1}^{-h}(\zeta^2D_1^{h}q)$, where $D_1^{h}$ is the difference quotient operator: 
	\begin{equation}
	D_1^{h}g=\dfrac{g(y_1+h,y_2)-g(y_1,y_2)}{h}.
	\end{equation} 
	Integration by parts for the difference quotient allows us to rewrite the first integral in \eqref{weak_aux_problem_weak} as follows: 
	\begin{eqnarray}
	\int\limits_{\tilde{\Pi}}L\nabla q \cdot \nabla v \,\text{d}y&=&-\int\limits_{U}\zeta^2\,(LD^h_1(\nabla q)\cdot D^h_1(\nabla q)) \,\text{d}y-2\int\limits_{U}\zeta D_1^h q (\nabla \zeta \cdot LD^h_1(\nabla q))\,\text{d}y\nonumber\\
	&&\nonumber-\int\limits_{U}\zeta^2 (\left[D^h_1L\right](\nabla q) \cdot D^h_1(\nabla q))\,\text{d}y
	-2\int\limits_{U}\zeta D_1^h q (\nabla \zeta \cdot \left[D^h_1L\right](\nabla q))\,\text{d}y\nonumber\\
	&&\leq-\dfrac{1}{2}\int\limits_{U}\zeta^2|D^h_1(\nabla q)|^2 \,\text{d}y+C\int\limits_{W}|D^h_1q|^2\,\text{d}y+C\int\limits_{U}|\nabla q|^2\,\text{d}y.\label{aux_est_1}
	\end{eqnarray}
	Here, to obtain the estimate we used \eqref{uniform_positivity}, uniform boundedness of $\nabla \zeta$ and $[D^h_1L]$, and Cauchy-Schwarz inequality.
		
	Similarly, we rewrite the second integral in \eqref{weak_aux_problem_weak}:
	\begin{eqnarray}
	-\int\limits_{\{y_2=0\}\cap\tilde{\Pi}}\hat{\beta}(\hat{\gamma} -  q)v\,\text{d}S_y &=& -\int\limits_{\mathbb R}\hat{\beta}(\hat{\gamma} - q)D_{1}^{-h}(\zeta^2D_1^{h}q)\,\text{d}y_1\nonumber\\
	&=&\int\limits_{\mathbb R}\,D_1^{h}(\hat{\beta}\hat{\gamma} - \hat{\beta} q)\,(D_1^{h}q) \zeta^2\,\text{d}y_1\nonumber\\
	&=&\int\limits_{\mathbb R}\,\zeta^2 \left[D_1^{h}(\hat{\beta}\hat{\gamma})\right] D_1^{h}q\,\text{d}y_1-\int\limits_{\mathbb R}\zeta^2 q (D^h_1\hat{\beta}) D^h_1 q\,\text{d}y_1 -\int\limits_{\mathbb R}\hat{\beta}\zeta^2|D^h_1q|^2\,\text{d}y_1 \nonumber\\
	&\leq& -\dfrac{\beta_0}{2}\int\limits_{\mathbb R}\zeta^2|D^h_1q|^2\,\text{d}y_1+C\int\limits_{\mathbb R}\zeta^2 q^2\,\text{d}y_1+ C\int\limits_{\mathbb R}\zeta^2|D_1^h(\hat{\gamma}\hat{\beta})|^2\,\text{d}y_1. \label{aux_est_2}
	\end{eqnarray}

	Finally, we estimate the third integral in \eqref{weak_aux_problem_weak} using \cite[Theorem 3 from \S 5.8.2]{evans1998pde} as follows: 
	\begin{eqnarray}
	-\int\limits_{\tilde{\Pi}}\hat{F}v\,\text{d}y&&\leq C\left(\int\limits_{U}|\hat{F}|^2 \text{d}y\right)^{1/2}
	\left(\int\limits_{U}|\nabla(\zeta^2 D^h_1q)|^2\,\text{d}y\right)^{1/2}\nonumber 
	\\&&\leq C\int\limits_{U}|\hat{F}|^2 \,\text{d}y+\dfrac{1}{8}\int\limits_{U}|\nabla(\zeta^2D_1^hq)|^2\,\text{d}y\nonumber\\
	&&\leq C\int\limits_{U}|\hat{F}|^2 \,\text{d}y+\dfrac{1}{4}\int\limits_{U}\zeta^2|\nabla(D_1^hq)|^2\,\text{d}y+\dfrac{1}{4}\int\limits_{U}|\nabla(\zeta)^2\cdot D_1^hq|^2\,\text{d}y\nonumber\\
	&&\leq C\int\limits_{U}|\hat{F}|^2 \,\text{d}y+\dfrac{1}{4}\int\limits_{U}\zeta^2|\nabla(D_1^hq)|^2\,\text{d}y+C\int\limits_{W}|D_1^hq|^2\,\text{d}y.\label{aux_est_3}
	\end{eqnarray}
	Combining \eqref{weak_aux_problem_weak} with estimates \eqref{aux_est_1}, \eqref{aux_est_2}, and \eqref{aux_est_3}, we get: 
	\begin{eqnarray*}
	&&\dfrac{1}{4}\int\limits_{U}\zeta^2|D^h_1(\nabla q)|^2 \,\text{d}y+\dfrac{\beta_0}{2}\int\limits_{\mathbb R}\zeta^2|D^h_1q|^2\,\text{d}y_1 \nonumber \\
	&&\hspace{60 pt} \leq C\left[\int\limits_{U}|\hat{F}|^2 \,\text{d}y+\int\limits_{W}|D^h_1q|^2\,\text{d}y+\int\limits_{U}|\nabla q|^2\,\text{d}y\right.\nonumber\\
	&&\hspace{120pt}\left.+\int\limits_{\mathbb R}\zeta^2 q^2\,\text{d}y_1+ \int\limits_{\mathbb R}\zeta^2|D_1^h(\hat{\gamma}\hat{\beta})|^2\,\text{d}y_1\right].
	\end{eqnarray*}
	Using \cite[Theorem 3(i) from \S 5.8.2]{evans1998pde} we get
	\begin{eqnarray*}
	&&\dfrac{1}{4}\int\limits_{V}|D^h_1(\nabla q)|^2 \,\text{d}y+\dfrac{\beta_0}{2}\int\limits_{\mathbb R \cap V}|D^h_1 q|^2\,\text{d}y_1 \nonumber \\
	&&\hspace{60 pt} \leq C\left[\int\limits_{U}|\hat{F}|^2 \,\text{d}y+\int\limits_{U}|\nabla q|^2\,\text{d}y+\int\limits_{\mathbb R}\zeta^2 q^2\,\text{d}y_1+ \int\limits_{\mathbb R}\zeta^2|\partial_{y_1} (\hat{\gamma}\hat{\beta})|^2\,\text{d}y_1\right].
	\end{eqnarray*}
	Then \cite[Theorem 3(ii) from \S 5.8.2]{evans1998pde} implies
	\begin{eqnarray}
	&&\dfrac{1}{4}\int\limits_{V}|\partial_{y_1}(\nabla q)|^2 \,\text{d}y+\dfrac{\beta_0}{2}\int\limits_{\mathbb R \cap V}|\partial_{y_1} q|^2\,\text{d}y_1 \nonumber \\
	&&\hspace{60 pt} \leq C\left[\int\limits_{U}|\hat{F}|^2 \,\text{d}y+\int\limits_{U}|\nabla q|^2\,\text{d}y+\int\limits_{\mathbb R}\zeta^2 q^2\,\text{d}y_1+ \int\limits_{\mathbb R}\zeta^2|\partial_{y_1} (\hat{\gamma}\hat{\beta})|^2\,\text{d}y_1\right].
	\end{eqnarray}
	Now write 
	$\partial_{y_1} (\hat{\gamma}\hat{\beta})=\beta\left(\dfrac{\varphi''\varphi'}{\sqrt{1+(\varphi')^2}}\gamma+\sqrt{1+(\varphi')^2}(\gamma_{x_1}+\gamma_{x_2}\varphi')\right)$. Thus,
	\begin{equation*}
	\int\limits_{\mathbb R}\zeta^2|\partial_{y_1} (\hat{\gamma}\hat{\beta})|^2\,\text{d}y_1\leq C\beta^2 \|\gamma\|_{C^1}^2.
	\end{equation*}

	Analogous estimate is valid for $\partial^2_{x_2}q$ since
	\begin{equation*}
	\partial^2_{y_2}q=-\dfrac{1}{1+(\varphi')^2}\left[\hat{F}-\partial_{y_1}^2q+2\varphi'\partial^2_{y_1y_2}q-\varphi''\partial_{y_2}q+2\varphi'\varphi''\partial_{y_2}q\right]
	\end{equation*} 
	and then 
	\begin{equation*}
	|\partial_{y_2}^2q|\leq C\left(|\hat{F}|+|\partial_{y_1}(\nabla q)|+|\nabla q|\right)
	\end{equation*}
	and thus \eqref{ractified_regularity} is proven. 
\end{proof}

\medskip 

Next from from \eqref{ractified_regularity} and interior regularity \cite[Theorem 8.8]{GilTru1983} we have that for and $\tilde{\Omega} \subset \subset \Omega$: 
\begin{equation}
\|q\|_{H^2(\tilde{\Omega})}\leq C\left(\|F\|_{L^2(\Omega)}+\|q\|_{L^2(\Omega)}\right).
\end{equation}
To obtain a bound on $\|q\|_{L^2(\Omega)}$, we will use that $q$ from \eqref{aux_elliptic_problem} minimizes the energy functional 
\begin{equation}
\mathcal{E}_0(q)=\dfrac{1}{2}\int\limits_{\Omega}|\nabla q|^2 \,\text{d}x+\dfrac{\beta}{2}\int\limits_{\Omega}|\gamma-q|^2\,\text{d}S_x+\int\limits_{\Omega}Fq\,\text{d}x.
\end{equation}
From $\mathcal{E}_0(q)\leq \mathcal{E}_0(0)$ we get for all $\delta>0$
\begin{equation*}
\int\limits_{\Omega}|\nabla q|^2 \,\text{d}x+\beta\int\limits_{\Omega}|q|^2\,\text{d}S_x\leq C\beta\|\gamma\|_C^2+C\delta^{-1}\|F\|^{2}_{L^2(\Omega)}+\delta\|q\|_{L^2(\Omega)}^2. 
\end{equation*}
Next, we use Poincar\'e estimate \eqref{poincare_without_coefs}: 
\begin{eqnarray*}
\|q\|_{L^2(\Omega)}^2\leq C\left(1+\dfrac{1}{\beta}\right)\left(\beta\|\gamma\|_C^2+C\delta^{-1}\|F\|^{2}_{L^2(\Omega)}+\delta\|q\|_{L^2(\Omega)}^2\right).
\end{eqnarray*}
Take $\delta:=\dfrac{1}{2C}\left(1+\dfrac{1}{\beta}\right)^{-1}$ and we get 
\begin{eqnarray*}
\|q\|_{L^2(\Omega)}^2\leq C(\beta+1)\|\gamma\|_C^2+C\left(1+\dfrac{1}{\beta}\right)^2\|F\|^{2}_{L^2(\Omega)}.
\end{eqnarray*}

Finally, we conclude that 
\begin{eqnarray*}
\|q\|^2_{H^2(\Omega)}\leq C(\beta+1)\|\gamma\|_C^2+C\left(1+\dfrac{1}{\beta}\right)^2\|F\|^{2}_{L^2(\Omega)}.
\end{eqnarray*}
\end{proof}


\section{Rescaling}
\label{sec:homo_rescaling}
In this Appendix we present non-dimensionalization of the steady state problem, 
showing how the scalings in \eqref{eq:pre_homo_Q_eq_re}-\eqref{eq:pre_homo_u_bc_re} arise. We will assume that all quantities are in their physical dimensions. Representative values of physical parameters can be found in Table~\ref{table:parameters}.

\begin{table}[h]
\begin{center}
	\rowcolors{2}{gray!25}{white!50}
	\begin{tabular}{|c c c c|} 	
		\hline 
		physical parameter & value & unit & representation in $\delta$'s \\ [0.5ex] 
		\hline
		$K$ & $10^{-8}$ & $\text{N}$ & $10^{-8}\,\, \delta_f$\\ 
		$\eta$ & $1$ & $\text{N}\cdot\text{s}/\text{m}^2$ & $10^{-4} \,\,\delta_T \delta_f / \delta_L^2$\\
		$W$ & $10^{-6}$ & $\text{N}/\text{m}$  & $10^{-8}\,\, \delta_f / \delta_L$\\
		$\Gamma$ & $1$ & $\text{m}^2/(\text{N} \cdot \text{s})$ & $10^4\,\,\delta_L^2/\delta_f\delta_T$  \\
		$a$ & $0.4$ & $\text{N}/\text{m}^2$ & $4 \times 10^{-5}  \delta_f / \delta_L^2$\\
		$c$ & $0.8$ & $\text{N}/\text{m}^2$ & $8 \times 10^{-5}  \delta_f / \delta_L^2$\\ 
		$\rho$ & $1.0$ & $\text{g}/\text{mL}$ & $10^{-5}\,\, \delta_f\delta_T^2/\delta_L^4$ \\
		$\zeta$ & $2.0$ & $\text{s}^{-1}$ & $2\,\, \delta_T^{-1}$ \\
		$v_{\text{prop}}$ &  $10^{-6}$& $\text{m}/\text{s}$ &  $10^{-4}\,\,\delta_L/\delta_T$\\
		\hline
	\end{tabular}
\caption{Values of physical parameters, taken from \cite{chi2020surface,genkin2017defects}.}
\label{table:parameters}
\end{center}
\end{table}

Introduce characteristic length $\delta_{L} = 10^{-2}\,\text{m}$,  time $\delta_T = 1\,\text{s}$, 
and force $\delta_f = 1\,\tN$. Non-dimensional flow velocity and pressure are  
\begin{eqnarray}
\u = \tilde{\u}\, \frac{\delta_L}{\delta_T}\text{ and }
 p = \tilde{p}\, \frac{\delta_f}{\delta_L^2}. 
\end{eqnarray}
Note that tensor order parameter $\Q$ is non-dimensional and does not require a non-dimensio\-nal\-ization. We also represent the external alignment field $F_\text{ext}$ as 
\begin{equation*}
F_{\text{ext}}=\zeta \tilde{F}_{\text{ext}},
\end{equation*}
where $\tilde{F}_{\text{ext}}$ is non-dimensional.

Using $\nabla_\x=\delta_L^{-1}\nabla_{\tilde{\x}}$, PDEs \eqref{eq:stst_lc_1} and \eqref{eq:stst_lc_4} reduce to
\begin{eqnarray}
&&\hspace{-20pt}\dfrac{\delta_T\Gamma K}{\delta_L^2}\Delta\Q+\delta_T\Gamma a\, \Q-\delta_T\Gamma c\, \Q\text{Tr}(\Q^2)+S(\nabla \tilde{\u}, \Q) - \tilde{\u} \cdot \nabla \Q+\delta_T\zeta\, \tilde{F}_\text{ext}=0,\label{rescaled-Q}\\
&&\hspace{-20pt}\dfrac{\rho\delta_L^4}{\delta_T^2\delta_f}(\tilde\u\cdot\nabla)\tilde{\u}-\dfrac{\eta\delta_L^2}{\delta_T\delta_f}\Delta \tilde{\u} + \nabla\tilde{p} = \dfrac{K}{\delta_f} \nabla\cdot(\nabla {\Q} \astrosun \nabla {\Q} + {\Q}\Delta {\Q} - \Delta {\Q} {\Q}).\label{rescaled-u}
\end{eqnarray}
Here and below in this section all spatial derivatives are taken with respect to $\tilde{\x}$. 

Boundary condition \eqref{eq:stst_lc_3} and \eqref{eq:stst_lc_6} becomes
\begin{equation}
\tilde{\u}=\dfrac{\delta_T}{\delta_L}v_{\text{prop}}{u}^{(p)}_{\text{sq}}\boldsymbol{\tau}\quad \text{and}\quad \partial_\nu \Q=\dfrac{W\delta_L}{K}(\Q_{\text{pref}}-\Q).
\end{equation}
Here we represented $u_{\text{sq}}=v_{\text{prop}}{u}^{(p)}_{\text{sq}}$ where ${u}^{(p)}_{\text{sq}}$ is the profile of the propulsion such that  $\max{{u}^{(p)}_{\text{sq}}}=1$ and $v_{\text{prop}}$ is the propulsion strength. 
%
Introduce rescaled parameters: 
\begin{eqnarray}
&\varepsilon=\dfrac{L}{\delta_L},\quad \gamma=\dfrac{\delta_T\Gamma K}{L\delta_L}, \quad \tilde{a}=\delta_T\Gamma a,\quad \tilde{c}=\delta_T\Gamma c,\quad \tilde{\zeta}=\delta_T\zeta,&\nonumber \\
&\tilde{\rho}=\dfrac{\rho\delta_L^5}{L\delta_T^2\delta_f}, \quad \tilde{\eta}=\dfrac{\eta \delta_L^3}{L\delta_T\delta_f},\quad  \kappa=\dfrac{K\delta_L^2}{L^2\delta_f}, \quad \tilde{W}=\dfrac{W}{\delta_L}{K},\quad \tilde{v}_{\text{prop}}=\dfrac{\delta_T}{L}v_{\text{prop}}.& \label{non-dim-parameters} 
\end{eqnarray}
Specific values of these parameters can be found in Table~\ref{table:parameters-rescaled}.

\begin{table}[h]
	\begin{center}
		\rowcolors{2}{gray!25}{white!50}
		\begin{tabular}{|c c|} 
			\hline
			rescaled parameter & value  \\ [0.5ex] 
			\hline
			$\ve$ & $10^{-4}$\\ 
			$\gamma$ & $1$ \\
			$\tilde{a}$ & $0.4$\\
			$\tilde{c}$ & $0.8$ \\
			$\tilde{\zeta}$ & $2.0$ \\
			$\tilde{\rho}$ & $0.1$ \\ 
			$\tilde{\eta}$ & $1.0$  \\
			$\kappa$ & $1.0$ \\
			$\tilde{W}$ & $1.0$ \\
			$\tilde{v}_{\text{prop}}$ & $1.0$ \\
			\hline
		\end{tabular}
		\caption{Values of non-dimensional parameters introduced in \eqref{non-dim-parameters} corresponding to values of physical parameters from Table \eqref{table:parameters}.}
		\label{table:parameters-rescaled}
	\end{center}
\end{table}

Then PDEs \eqref{rescaled-Q} and \eqref{rescaled-u} become
\begin{eqnarray*}
&&\varepsilon\gamma\Delta\Q+\tilde{a}\, \Q-\tilde{c}\, \Q\text{Tr}(\Q^2)+S(\nabla \tilde{\u}, \Q) - \tilde{\u} \cdot \nabla \Q+\tilde{\zeta}\, \tilde{F}_\text{ext}=0\text{ in }\Omega_{\ve},\\
&&\varepsilon\tilde{\rho}(\tilde\u\cdot\nabla)\tilde{\u}-\varepsilon\tilde{\eta}\Delta \tilde{\u} + \nabla\tilde{p} = \varepsilon^2 \kappa\nabla \cdot(\nabla {\Q} \astrosun \nabla {\Q} + {\Q}\Delta {\Q} - \Delta {\Q} {\Q}) \text{ in }\Omega_{\ve},
\end{eqnarray*}
with boundary conditions 
\begin{eqnarray*}
&&\tilde{\u}=\ve \tilde{u}_{\text{sq}}\,\boldsymbol{\tau} \text{ and }\partial_\nu \Q=\tilde{W}(\Q_{\text{pref}}-\Q) \text{ in } \partial\mathcal{P}_\ve,\\
&& \tilde{\u} \text{ and } \Q \text{ are }2\varepsilon-\text{periodic}.  
\end{eqnarray*}
Here, $\tilde{u}_{\text{sq}}=\tilde{v}_{\text{prop}}{u}^{(p)}_{\text{sq}}$, $\Omega_{\ve}=\delta_L^{-1}\Omega$ and $\mathcal{P}_\ve=\delta_L^{-1}\mathcal{P}$. 

\end{document}